\def\doi#1{   {\href{http://dx.doi.org/#1}
   {{\mdseries\ttfamily DOI}}}}
\def\b{\beta}
\newcommand{\al}{\alpha}    \newcommand{\be}{\beta}
\newcommand{\de}{\delta}    \newcommand{\De}{\Delta}
  \newcommand{\ep}{\varepsilon}
    \newcommand{\la}{\lambda}
\newcommand{\ga}{\gamma}    
\newcommand{\R}{\mathbb{R}}
\newcommand{\beeq}{\begin{equation}}\newcommand{\eneq}{\end{equation}}
\newcommand{\cmt}[1]{}
\def \endprf{\hfill  {\vrule height6pt width6pt depth0pt}\medskip}
\def\<{\langle}             \def\>{\rangle}
\def\({\left(}                 \def\){\right)}
\numberwithin{equation}{section}
\newtheorem{thm}{Theorem}[section]
 \newtheorem{cor}[thm]{Corollary}
 \newtheorem{lem}[thm]{Lemma}
 \newtheorem{prop}[thm]{Proposition}
 \newtheorem{rem}[thm]{Remark}
\title[spherical maximal operators on radial functions]
{$L^p$-improving bounds for spherical maximal operators over restricted dilation sets: radial improvement}
\author{Shuijiang Zhao$^{*}$}\thanks{* Corresponding author}
\address{School of Mathematical Sciences\\ Zhejiang University\\Hangzhou 310058, P. R. China}\email{zhaoshuijiang@zju.edu.cn }
\keywords{$L^p$-improving bounds, spherical maximal operators,  radial improvement, restricted dilation sets, Minkowski dimension, (quasi-)Assouad dimension}
\subjclass[2010]{42B25, 28A80}
\begin{document}
\bibliographystyle{plain}

\maketitle
\begin{abstract}
In this paper, we study the spherical maximal operator $ M_E $ over $ E\subset [1,2]$, restricted to radial functions. In higher dimensions $ d\geq 3$, we establish a complete range of $ L^p-$improving estimates for $ M_E $. In two dimensions, sharp results are also obtained for quasi-Assouad regular sets $E$. A notable feature is that the high-dimensional results depend solely on the upper Minkowski dimension, while the two-dimensional results also involve other concepts in fractal geometry such as the Assouad spectrum. Additionally, the geometric shapes of the regions corresponding to the sharp $ L^p-$improving bounds differ significantly between the two cases.
\end{abstract}

\section{Introduction}
For a locally integrable function $ f $, we consider the spherical means
\[A_t f(x) = \int_{\mathbb{S}^{d-1}} f(x-ty) d\sigma(y), \]
where $d\sigma$ denotes the standard normalized surface measure on the unit sphere $\mathbb{S}^{d-1}$ with $ d\geq 2$. For a fixed $ E\subset (0,\infty) $, we define the maximal function
\[ M_E f(x) = \sup_{t\in E} |A_t f(x)|,\]
whose $ L^p $ boundedness and $ L^p-$improving properties have been well studied. In this paper, our purpose is to seek some {\em radial improvement}  for $ L^p-$improving properties of  $ M_E $ when the function $ f $ is restricted to being radial.

Let us first briefly review the history about  $ L^p $ boundedness properties of $ M_E $. When $E=(0,\infty)$, $ M_E $ is bounded on $ L^p$ if and only if $ p>\frac{d}{d-1} $, which was first proven by Stein \cite{1976Stein} for the case $ d\geq 3 $ and later by Bourgain \cite{1986Bourgain} for the case $ d=2$. There is no radial improvement for  the $ L^p $ bounds of $M_{(0,\infty)}$ since the counterexample constructed by Stein \cite{1976Stein} is also radial. At the endpoint $ p=\frac{d}{d-1}$ with $ d\geq 3 $, Bourgain \cite{1985Bourgain} showed a restricted weak type estimate for $M_{(0,\infty)}$ , i.e. $M_{(0,\infty)}$ maps $ L^{\frac{d}{d-1},1}(\R^d)$ to $L^{\frac{d}{d-1},\infty}(\R^d)$. Here and in what follows we use  $ L^{p,q} $ to denote the usual Lorentz space. In two dimensions, this endpoint restricted weak type inequality fails for general functions by a Besicovitch set construction \cite[Proposition 1.13]{2003STW} even though it holds true for radial functions \cite{1987Leckband}. 

For a generic set $E\subset(0,\infty)$, Seeger--Wainger--Wright \cite{1995SWW} used a generalized notion of upper Minkowski dimension to determine the critical exponent $ p(E)$ such that $ M_E $ is bounded on $ L^p $ if $ p>p(E) $ and unbounded if $p<p(E)$. To describe the exponent $ p(E)$, we shall recall some definitions. For a bounded set $F\subset(0,\infty)$, we define $ N(F,a) $ to be the minimal number (which is finite due to the boundedness of $F$) of intervals of length $a$ needed to cover $ E $. Set  $I_k=[2^k, 2^{k+1}]$ for $ k\in\mathbb{Z}$ and $E^k=E\cap I_k$. Then 
\[p(E)=1+\frac{1}{d-1}\overline{\lim_{\de\to 0}}\Big[\sup_{k\in \mathbb{Z}}\frac{\log N(E^k,2^k\de)}{\log\de^{-1}}\Big].\]
Note that $ 1\leq p(E)\leq \frac{d}{d-1} $. Unfortunately, we are unable to achieve any radial improvement on this critical exponent $p(E)$ because of the radial counterexample in \cite[Section 3]{1995SWW}. When $ p=p(E)$, the necessary and sufficient conditions for $ L^p_{\text{rad}}\to L^{p,q}$ estimates with $ p\leq q\leq \infty $ have been established in \cite{1997SWW} except for the extreme case $ p=2$ when $ d=2$. These conditions can be expressed in terms of various summation involving $ N(E^k,2^{k-n})$ with $ k\in \mathbb{Z} $ and $n\in\mathbb{N}$. If some appropriate regularity assumptions  on $E$ are satisfied,  Seeger--Tao--Wright \cite{2003STW} proved that the previous relevant conditions remain necessary and sufficient  for the endpoint $ L^p $ boundedness of $ M_E $ on general functions with $ 1<p\leq \frac{d}{d-1} $ and $ d\geq 2 $. 

As we have seen, it is  hard to achieve significant radial improvement on the $ L^p $ boundedness of $ M_E $ with an unbounded set $ E\subset (0,\infty)$, except for some certain endpoint cases. Now we turn to the $ L^p-$improving  bounds, i.e. $ L^p\to L^q$ estimates for $ p<q$, for $ M_E $ when $ E\subset[1,2]$. In order to review the relevant results, we recall some definitions including various fractal dimensions of $ E$. The {\em (upper) Minkowski dimension} $\dim_{\mathrm{M}}\!E$ is defined by
\[ \dim_{\mathrm{M}}\!E = \inf\big\{ a>0\,:\, \exists\,c>0\;\text{s.t.}\,\forall\,\delta\in(0,1),\, N(E,\delta)\le c\, \delta^{-a}\ \big\}. \]
And the {\em Assouad dimension} $\dim_{\mathrm{A}}\!E$ (see \cite{1979Assouad} for more details) is defined by
\begin{equation}\label{def-assouad}
\dim_{\mathrm{A}}\!E =\inf\left\{ a>0\,:\, \right. \exists\,c>0\; \text{s.t.}\,\forall\,I,\,\delta\in (0,|I|),\,\left. N(E\cap I,\delta)\le c\, (\de/|I|)^{-a}\ \right\};
\end{equation}
here $I$ runs over all intervals. Clearly, $ 0\leq \dim_{\mathrm{M}}\!E\leq  \dim_{\mathrm{A}}\!E\leq 1$. There is another fractal dimension that lies between these two. The definition involves  the
\emph{upper Assouad spectrum} (see \cite{2019FHTTY}) $\overline\dim_{\mathrm{A},\theta}E$ for $ \theta\in [0,1] $, which is defined by 
\[\overline \dim_{\mathrm{A},\theta}E = \inf\left\{ a>0\,:\, \right. \exists\,c>0\;  \text{s.t.}\,\forall\,\delta\in (0,1),\,|I|\ge\delta^\theta,\,
\left. N(E\cap I,\delta)\le c\, (\de/|I|)^{-a}\ \right\}.\]
Then the \emph{quasi-Assouad dimension}  is defined by  the limit
\begin{equation}
\label{eqn:quasi-Assouad-def} \dim_{\mathrm{qA}}\!E = \lim_{\theta\nearrow 1} \overline{\dim}_{\mathrm{A},\theta}E .
\end{equation}
since $ \overline{\dim}_{\mathrm{A},\theta}E$ is non-decreasing with respect to $ \theta\in [0,1] $. As in \cite{2023RS}, we use the {\em  type set} $\mathcal{T}_E$  to represent the region of $ (1/p,1/q)\in  [0,1]^2$ such that $ M_E $ is bounded from $L^p$ to $L^q$.
Similarly, the {\em radial type set} is defined by 
\[\mathcal{T}_E^{rad} = \{ (\tfrac1p,\tfrac1q)\in [0,1]^2\,:\, M_E\;\;\text{is bounded}\;\; L_{rad}^p\to L^q\}. \]
For $0\leq \be\leq \ga \leq 1$, let $ \mathcal{P}(\be,\ga)$ denote the closed  quadrangle formed by the vertices
\begin{align*}
	&O=(0,0),\quad \quad  P_1(\be)=(\tfrac{d-1}{d-1+\be},\tfrac{d-1}{d-1+\be}),\\
	&P_2(\be)=(\tfrac{d-\be}{d-\be+1},\tfrac{1}{d-\be+1}), \quad P_3(\ga)=(\tfrac{d(d-1)}{d^2+2\ga-1},\tfrac{d-1}{d^2+2\ga-1}).
\end{align*}

When $ E=[1,2]$, $ \dim_{\mathrm{M}}\!E=\dim_{\mathrm{qA}}\!E=\dim_{\mathrm{A}}\!E=1$. In two dimensions,  Schlag \cite{1997Schlag} showed that 
\begin{equation}\label{relation-interior}
	\text{int} (\mathcal{P}(1,1))\cup[O,P_1(1))\subset \mathcal{T}_{[1,2]},
\end{equation}
and $ L^p\to L^q$ estimate fails if $ (1/p,1/q)\notin \mathcal{P}(1,1)\setminus \{P_1(1)\}$. Here and in what follows we denote the closed line segment with endpoints $ P $ and $ Q $ by $ [P, Q] $. Similarly, we use the notation $ \left[P, Q\right),  \left(P, Q\right) $ and $  \left(P, Q\right] $. Note that $ \mathcal{P}(1,1) $ becomes a triangle when $ d=2 $ since the points $ P_1(1) $ and $ P_2(1) $ coincide. Lee \cite{2003Lee} obtained the additional endpoint results 
\begin{equation}\label{relation-endpoint}
(O, P_3(1))\cup (P_3(1),P_2(1))\subset \mathcal{T}_{[1,2]},
\end{equation}
as well as the restricted weak type estimate at the point $ P_3(1) $. It is still open whether $ P_3(1)\in \mathcal{T}_{[1,2]}$ for the circular maximal operator. In higher dimensions $ d\geq 3 $, a necessary condition is $\mathcal{T}_{[1,2]}\subset \mathcal{P}(1,1)\setminus[P_1(1),P_2(1)]$, see \cite{1997SS}. When $ d\geq 3 $, the corresponding interior results (\ref{relation-interior}) and  borderline results (\ref{relation-endpoint})   were established successively by Schlag--Sogge \cite{1997SS} and Lee \cite{2003Lee}, respectively. Therefore, the only unknown point for $ \mathcal{T}_{[1,2]} $ when $ d\geq 3 $ is  $ P_3(1)$, at which Lee \cite{2003Lee} obtained the restricted weak type inequality. 

Note that the necessary condition corresponding to the line connecting $ P_3(1) $ and $ P_2(1) $ follows from the Knapp example (see \cite{1997Schlag, 1997SS}), which is non-radial. Consequently, it is reasonable to expect that $\mathcal{T}_{[1,2]}^{rad} $ is significantly larger than $ \mathcal{T}_{[1,2]}$.  However, to the best of our knowledge, there are currently no results regarding  $\mathcal{T}_{[1,2]}^{rad} $, not to mention  $\mathcal{T}_{E}^{rad} $ for a generic $ E\subset [1,2] $. Thus, in our paper, we first consider the $ L^p-$improving bounds for the standard localized spherical maximal operator $ M_{[1,2]}$ acting on radial functions. 

To simplify the statement of our results, we introduce some notation. For $ 0\leq \be\leq 1$, let $ \De(\be) $ denote the closed triangle formed by vertices
\begin{equation}\label{tridef}
O = (0,0),\;	Q_1(\be) = (\tfrac{d-1}{d-1+\beta}, \tfrac{d-1}{d-1+\beta}),\; Q_2(\be) = (\tfrac{d(d-1)}{d^2-1+\be}, \tfrac{d-1}{d^2-1+\be}).
\end{equation}
Our first main result is concerning about $ \mathcal{T}_{[1,2]}^{rad} $. See Figure \ref{fig-interval-higher} and \ref{fig-interval-2D} for the comparison between  $ \mathcal{T}_{[1,2]} $ and $ \mathcal{T}_{[1,2]}^{rad}$.
\begin{thm}\label{thminterval}
	Let $ d\geq 2 $. Then $ \mathcal{T}_{[1,2]}^{rad}=\De(1)\setminus [Q_1(1),Q_2(1)] $.
\end{thm}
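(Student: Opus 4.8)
The plan is to recast the estimate as a one‑dimensional weighted maximal inequality, dispatch the necessity part with three explicit radial examples, and obtain the sufficiency part by interpolating the (already known) $L^p$‑boundedness along the diagonal with a single endpoint bound at the focal vertex $Q_2(1)$.

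\emph{Reduction.} For radial $f(x)=f_0(|x|)$, writing $r=|x|$, the classical identity $A_tf(x)=c_d\int_{-1}^{1}f_0\big(\sqrt{r^2+t^2-2rtu}\big)(1-u^2)^{(d-3)/2}\,du$ becomes, after the substitution $s=\sqrt{r^2+t^2-2rtu}$,
\[
A_tf(x)=\int_{|r-t|}^{r+t}K_t(r,s)\,f_0(s)\,ds,\qquad
K_t(r,s)=c_d\,\frac{s\,\big[(s^2-(r-t)^2)\,((r+t)^2-s^2)\big]^{(d-3)/2}}{(rt)^{\,d-2}}.
\]
Hence $(\tfrac1p,\tfrac1q)\in\mathcal T_{[1,2]}^{rad}$ if and only if $g\mapsto\sup_{1\le t\le2}\big|\int K_t(\cdot,s)g(s)\,ds\big|$ is bounded $L^p(s^{d-1}\,ds)\to L^q(r^{d-1}\,dr)$. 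I would split the $r$‑line into the focal range $r\le\tfrac12$ (where, for $t\in[1,2]$, the sphere $S(x,t)$ nearly passes through the origin), the critical range $\tfrac12\le r\le4$ (where $r\sim t$), and the far range $r\ge4$, decomposed dyadically into $r\sim2^j$. The far range is innocuous: there all the prefactors in $K_t$ cancel, leaving the normalized Beta‑type density $[(s-(r-t))\,((r+t)-s)]^{(d-3)/2}$ on an interval of length $\les1$, so the operator is dominated by a one‑dimensional Hardy--Littlewood‑type average (for $d\ge3$ even by an $L^1$‑normalized average with bounded kernel); since $r^{d-1}$ is doubling on intervals of length $\les1$ inside $[4,\infty)$, this gives the full range $p\le q$ (with $p>1$ on the weighted diagonal), comfortably inside $\De(1)$. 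The critical range is handled by the known fixed‑time $L^p\to L^q$ bounds for $A_1$, or by the local‑smoothing estimates underlying \cite{2003Lee}, together with a Sobolev embedding in $t$ (using that $\partial_tA_t$ is an average of the same type), and there one in fact obtains a region strictly larger than $\De(1)$. Thus all the substance is concentrated in the focal range, which is exactly what produces the two slanted edges of $\De(1)$ and the excision of $[Q_1(1),Q_2(1)]$.

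\emph{Necessity.} I would use three compactly supported radial examples. With $f=\mathbf 1_{\{R\le|x|\le R+1\}}$, letting $R\to\infty$ forces $\tfrac1q\le\tfrac1p$. With $f=\mathbf 1_{\{1\le|x|\le1+\de\}}$, the bound $M_{[1,2]}f(x)\ge A_1f(x)\gtrsim1$ for $|x|\les\de$ together with $\|f\|_p\approx\de^{1/p}$ forces, as $\de\to0$, the focusing condition $\tfrac1q\ge\tfrac1d\cdot\tfrac1p$. Finally, for $|x|\in(1,2)$ the sphere $S(x,|x|)$ passes through the origin, so for $f_0(s)=s^{-a}\mathbf 1_{\{0<s\le1/2\}}$ one computes $A_{|x|}f(x)=\infty$ whenever $a\ge d-1$, while such $f$ lies in $L^p_{rad}$ exactly when $a<d/p$, and $[d-1,\,d/p)\ne\varnothing$ precisely when $\tfrac1p>\tfrac{d-1}{d}$; thus $\tfrac1p\le\tfrac{d-1}{d}$ is necessary, and at $p=\tfrac d{d-1}$ the logarithmically corrected $f_0(s)=s^{-(d-1)}\big(\log\tfrac1s\big)^{-c}\mathbf 1_{\{0<s\le1/2\}}$ with $\tfrac{d-1}{d}<c\le1$ lies in $L^{d/(d-1)}_{rad}$ yet has $M_{[1,2]}f\equiv\infty$ on $\{1<|x|<2\}$, excluding the entire edge $[Q_1(1),Q_2(1)]$. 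Since $\De(1)=\{\tfrac1d\cdot\tfrac1p\le\tfrac1q\le\tfrac1p,\ \tfrac1p\le\tfrac{d-1}{d}\}$, these three conditions carve out exactly $\De(1)\setminus[Q_1(1),Q_2(1)]$.

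\emph{Sufficiency and the main obstacle.} The type set is convex (linearize the supremum over $t$ and interpolate), so it suffices to prove two facts and interpolate. First, $M_{[1,2]}\colon L^p_{rad}\to L^p$ for every $p>\tfrac d{d-1}$, which is immediate from $M_{[1,2]}\le M_{(0,\infty)}$ and the theorems of Stein \cite{1976Stein} and Bourgain \cite{1986Bourgain}; this gives the open edge $[O,Q_1(1))$. Second — the crux — the restricted weak‑type bound $M_{[1,2]}\colon L^{d/(d-1),1}_{rad}\to L^{d^2/(d-1),\infty}$ at the focal vertex $Q_2(1)$, equivalently the radial spherical Nikodym‑type inequality $\big|\{x:M_{[1,2]}\mathbf 1_A(x)>\la\}\big|\les\la^{-d^2/(d-1)}\,|A|^{d}$ for $0<\la\le1$ and every radial $A\subset\R^d$. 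Granting these, real interpolation of the second with the trivial $L^\infty\to L^\infty$ yields strong type $L^p_{rad}\to L^{dp}$ for $\tfrac d{d-1}<p<\infty$ (the open edge $[O,Q_2(1))$), real interpolation of the restricted weak‑type bound with the strong $L^p$‑bounds near $Q_1(1)$ fills the interior of $\De(1)$, and $O$ is trivial; altogether $\De(1)\setminus[Q_1(1),Q_2(1)]\subset\mathcal T_{[1,2]}^{rad}$. To prove the endpoint bound at $Q_2(1)$ I would decompose $A$ into annuli of radius $\rho\in[1,3]$ and dyadic width $\de=2^{-k}$ and, using the geometry of a sphere $S(x,t)$, $t\in[1,2]$, tangent to or transversal to such an annulus, estimate for each level the measure of the set on which its contribution to $M_{[1,2]}\mathbf 1_A$ is large, then sum over $\rho,\de$ and optimize; the decisive contribution comes from $x$ near the origin (the same focusing as in the sharp example), and it reproduces the exponents $d^2/(d-1)$ and $d$. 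The reason this endpoint is attainable for radial data — and more generally why $\De(1)$ is strictly larger than the non‑radial region $\mathcal P(1,1)$, with its lower edge pushed all the way out to $\tfrac1p=\tfrac{d-1}{d}$ — is that the Knapp (tangential plate) obstruction responsible for the segment $[P_2(1),P_3(1)]$ in the general case \cite{1997Schlag,1997SS} simply does not arise for radial functions; making that gain quantitative in the annulus summation is the hard part.
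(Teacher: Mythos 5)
Your necessity argument is fine and is essentially the paper's: the annulus/ball examples give $p\le q$ and $q\le dp$, and Stein's logarithmic example $f_0(s)=s^{-(d-1)}(\log\tfrac1s)^{-c}\chi_{(0,1/2]}(s)$ excludes the whole vertical segment $[Q_1(1),Q_2(1)]$. The problem is the sufficiency half. Your entire positive direction rests on the endpoint bound at $Q_2(1)$, namely $M_{[1,2]}\colon L^{d/(d-1),1}_{rad}\to L^{d^2/(d-1),\infty}$, and you do not prove it: you sketch an annulus decomposition of $A$, say you would ``sum over $\rho,\delta$ and optimize,'' and explicitly concede that ``making that gain quantitative in the annulus summation is the hard part.'' That quantitative step is the theorem; without it the proposal reduces the statement to an unproven estimate. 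The paper carries it out concretely from the radial kernel \eqref{radialformula}--\eqref{kernel}: for $d\ge 3$ one gets the pointwise bound $\mathfrak M_{[1,2],p_d}g(r)\lesssim\chi_{[2/3,4]}(r)\,\|f_0\|_{L^{p_d,1}(\mu_d)}$ (so the critical-range piece maps $L^{p_d,1}_{rad}$ into every $L^{q,s}$ of a bounded set), plus elementary $L^p\to L^q$ bounds for the tail operators $R_1,R_2$, yielding even the Lorentz-strong bound $L^{p_d,1}\to L^{p_d d,1}$; for $d=2$ the kernel has genuine singularities at $s=|r-t|$ and $s=r+t$, and the paper needs the Nowak--Roncal--Szarek square-root pointwise inequality for indicators (Lemma \ref{lem-tech}) to reach $L^{2,1}_{rad}\to L^{4,\infty}$ (and shows in Proposition \ref{prop-2DLorentz-necessary} that nothing stronger at $Q_2(1)$ is possible). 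None of this $d=2$ singularity issue is addressed in your sketch.

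There is a second, structural flaw: your treatment of the critical range $\tfrac12\le r\le 4$ by non-radial tools (fixed-time $L^p\to L^q$ bounds for $A_1$, or the local smoothing behind \cite{2003Lee}, ``together with a Sobolev embedding in $t$, using that $\partial_tA_t$ is an average of the same type'') cannot work near $Q_2(1)$. First, $\partial_tA_t$ is not an average of the same type -- it loses a full derivative -- so the Sobolev-in-$t$ step does not come for free; this is precisely why maximal bounds require local smoothing. Second, and decisively, the points of $\De(1)$ with $\tfrac1p$ near $\tfrac{d-1}{d}$ and $\tfrac1q$ near $\tfrac{d-1}{d^2}$ lie strictly outside the non-radial type set $\mathcal P(1,1)$, and the Knapp plate that enforces $\mathcal T_{[1,2]}\subset\mathcal P(1,1)$ produces large values of $M_{[1,2]}f$ exactly at $|x|\sim t\in[1,2]$, i.e.\ in your critical range; so any argument for that range which does not use radiality is proving a false statement at those exponents. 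Relatedly, your claim that ``all the substance is concentrated in the focal range $r\le\tfrac12$'' misplaces the enemies: the excision of $[Q_1(1),Q_2(1)]$ (Stein's example) lives in the critical range $r\sim t$, while the focal range is responsible only for the lower edge $q\le dp$. In short: the interpolation skeleton and the counterexamples match the paper, but the two load-bearing steps -- the radial endpoint estimate at $Q_2(1)$ and a genuinely radial treatment of the range $r\sim t$ -- are missing or rest on tools that cannot deliver them.
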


	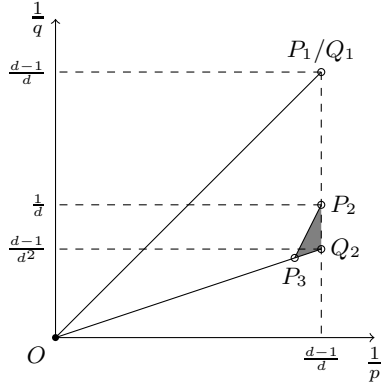
\begin{figure}[h]
	\begin{tikzpicture}[scale=5.3]
	\draw (0,0) [->] -- (0,0.8) node [left] {$\frac1q$};
	\draw (0,0) [->] -- (0.8,0) node [below] {$\frac1p$};
	
	\coordinate (O) at (0,0);
	\coordinate (P1) at (2/3, 2/3);
	\coordinate (P2) at (2/3, 1/3);
	\coordinate (P3) at (3/5, 1/5);	
	\coordinate (Q2) at (2/3, 2/9);
	
	\fill[gray] (P3) -- (Q2) -- (P2)  -- cycle;	
	
	\fill (O)  circle [radius=.25pt];
	\draw (P1)  circle [radius=.25pt];
	\draw (P2)  circle [radius=.25pt];
	\draw (Q2)  circle [radius=.25pt];
	\draw (P3)  circle [radius=.25pt];
	
	\draw (O) -- (Q2);
	\draw (O) -- (P1);
	\draw (P3)  -- (P2);
	\draw [dashed,opacity=.3]  (P1)-- (2/3,0);
	\draw [dashed,opacity=.3]  (P1)-- (0,2/3);
	\draw [dashed,opacity=.3]  (P2)-- (0,1/3);
	\draw [dashed,opacity=.3]  (Q2)-- (0,2/9);

	\node[below left,font=\small] at (O) {$O$};
	\node[above,font=\small] at (P1) {$P_1/Q_1$};
	\node[right,font=\small] at (P2) {$P_2$};
	\node[right,font=\small] at (Q2) {$Q_2$};
	\node[below ,font=\small] at (P3) {$P_3$};
	\node[below ,font=\tiny] at (2/3,0) {$\tfrac{d-1}{d}$};
	\node[left][font=\tiny] at (0,2/3) {$\tfrac{d-1}{d}$};
	\node[left,font=\tiny]at (0,1/3) {$\tfrac{1}{d}$};
	\node[font=\tiny, left] at (0,2/9) {$\tfrac{d-1}{d^2}$};	
	
	\end{tikzpicture}
	\caption{The radial improvement for  $M_{[1,2]}$ in higher dimensions ($ d\geq 3$) is represented by the gray triangle. For better demonstration, we choose the case $ d=3 $ here.}\label{fig-interval-higher}
\end{figure}

\begin{figure}[h]
	\begin{tikzpicture}[scale=7]
	\draw (0,0) [->] -- (0,0.6) node [left] {$\frac1q$};
	\draw (0,0) [->] -- (0.6,0) node [below] {$\frac1p$};
	
	\coordinate (O) at (0,0);
	\coordinate (P1) at (0.5, 0.5);
	\coordinate (Q2) at (.5, .25);
	\coordinate (P3) at (0.4, .2);	
	
	\fill[gray] (P3) -- (Q2) -- (P1)  -- cycle;
	
	\fill (O)  circle [radius=.25pt];
	\draw (P1)  circle [radius=.25pt];
	\draw (Q2)  circle [radius=.25pt];
	\draw (P3)  circle [radius=.25pt];
	
	\draw (O) -- (P1) -- (P3)  -- cycle;
	\draw (P3)  -- (Q2);
	\draw [dashed,opacity=.3]  (P1)-- (0.5,0);
	\draw [dashed,opacity=.3]  (P1)-- (0,0.5);
	\draw [dashed,opacity=.3]  (Q2)-- (0,0.25);
	\draw [dashed,opacity=.3]  (P3)-- (0,0.2);
	\draw [dashed,opacity=.3]  (P3)-- (0.4,0);

	\node[below left,font=\small] at (O) {$O$};
	\node[above,font=\small] at (P1) {$P_1/P_2/Q_1$};
	\node[right,font=\small] at (Q2) {$Q_2$};
	\node[below,font=\small] at (P3) {$P_3$};
	\node[below,font=\tiny] at (0.5,0) {$1/2$};
	\node[below,font=\tiny] at (0.4,0) {$2/5$};
	\node[left][font=\tiny] at (0,0.5) {$1/2$};
	\node[left,font=\tiny]at (0,0.25) {$1/4$};
	\node[font=\tiny, left] at (0,0.2) {$1/5$};
	\end{tikzpicture}	
	\caption{The gray triangle  illustrates the radial improvement for the circular maximal operators $ M_{[1,2]} $.}\label{fig-interval-2D}
\end{figure}
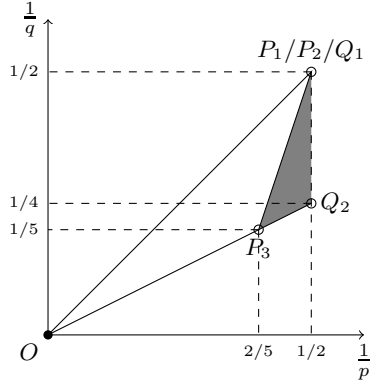

In higher dimensions $ d\geq 3 $, $ \mathcal{T}_E $ has also been extensively studied. For a quasi-Assouad regular set $ E \subset[1,2]$ with $\be= \dim_{\mathrm{M}}\!E$ and $\ga=\dim_{\mathrm{qA}}\!E$, a necessary condition $\mathcal{T}_E \subset \mathcal{P}(\be,\ga)$ was obtained in \cite{2021AHRS,2023RS}. Here, a set $ E\subset[1,2] $ is called \emph{quasi-Assouad regular} if either $ \ga=0 $ or $\overline\dim_{\mathrm{A},\theta}E=\ga$ for any $ \theta\in(1-\be/\ga,1)$. For the endpoint estimates on the off-diagonal boundaries of $\mathcal{P}(\be,\ga)$, we shall recall some notation in \cite[Section 2.3]{2023RS}. We define the $\beta$-Minkowski characteristic  $\chi^E_{\mathrm{M},\beta}:(0,1]\to [0,\infty]$ with 
\begin{equation}\label{eqn:mink-char}
\chi^E_{\mathrm{M},\beta}(\delta) = \delta^\beta N(E,\delta),
\end{equation}
and  $\gamma$-Assouad characteristic $\chi^E_{\mathrm{A},\gamma}:(0,1]\to [0,\infty]$ with
\begin{equation}\label{eqn:assouad-char}
\chi^E_{\mathrm{A},\gamma}(\delta) = \sup_{\substack{ |I|\ge \delta}}\big(\tfrac{\delta}{|I|}\big)^\gamma N(E\cap I,\delta).
\end{equation}
When $ \sup_{0<\de<1}\chi^E_{\mathrm{M},\beta}(\delta)<\infty $  with $ 0\leq \be<1 $ and $ \sup_{0<\de<1}\chi^E_{\mathrm{A},\ga}(\delta)<\infty$ with $ 0\leq \ga\leq 1$, Anderson--Hughes--Roos--Seeger \cite{2021AHRS} obtained the restricted weak type estimates at the points $ P_1(\be), P_2(\be)$ and $ P_3(\ga)$, and then proved  the endpoint result
\begin{equation}\label{endpoint-genericE}
	 \mathcal{P}(\be,\ga)\setminus\{P_1(\be),P_2(\be),P_3(\ga)\}\subset \mathcal{T}_E 
\end{equation}
by interpolation. For a generic $ E\subset[1,2]$ with $\be= \dim_{\mathrm{M}}\!E, \ga=\dim_{\mathrm{qA}}\!E$ and $\ga_*=\dim_{\mathrm{A}}\!E$, the non-endpoint result
\begin{equation}\label{nonendpoint-genericE}
	\text{int}(\mathcal{P}(\be,\ga))\cup(O,P_1(\be))\cup(O,P_3(\ga_*))\subset \mathcal{T}_E
\end{equation}
was also essentially established in \cite{2021AHRS}, which is sharp at least for quasi-Assouad regular sets. Indeed, (\ref{nonendpoint-genericE}) follows directly from the endpoint result (\ref{endpoint-genericE}) by a standard argument (see also the proof of Corollary \ref{cor2D-R3}).

Our second main result  gives a complete characterization of $\mathcal{T}_E^{rad}$ in higher dimensions $ d\geq 3 $, which depends solely on the properties of the covering number $ N(E,\de) $.

\begin{thm}\label{thmhigherdim}
	Let $ d\geq 3 $ and $ E\subset [1,2] $ with $ \dim_{\mathrm{M}}\!E=\be $.
	
	(i) Suppose that $ \be<1 $ and 
	\begin{equation}\label{minassm}
		\sup_{0<\de<1}\chi^E_{\mathrm{M},\beta}(\delta)<\infty.
	\end{equation}
	Then $ \mathcal{T}_E^{rad} =\De(\be) $.
	
	(ii) Suppose that (\ref{minassm}) does not hold true. Then $ \mathcal{T}_E^{rad} =\De(\be)\setminus [Q_1(\be), Q_2(\be)] $.
	
	(iii) Suppose that $ \tfrac{d}{d-1}\leq q\leq \tfrac{d^2}{d-1} $. Then $ M_E $ is bounded $ L^{\tfrac{d}{d-1}}_{rad}\to L^q $ if and only if 
	\begin{equation}\label{minlogloss}
		\sup_{0<\de<1}(\log(\tfrac{1}{\de}))^{\tfrac{q}{d}}\chi^E_{\mathrm{M},1}(\delta)<\infty.
	\end{equation}
\end{thm}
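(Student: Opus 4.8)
The plan is to reduce matters to a single-scale analysis followed by summation over dyadic scales, exploiting that $f$ is radial. After an appropriate rescaling, $M_E$ restricted to functions supported on an annulus decomposes as $M_E f \lesssim \sum_{n\ge 0} M_{E,n} f$, where $M_{E,n}$ localizes the dilation parameter to a $2^{-n}$-neighborhood and carries a smoothing kernel at frequency $\sim 2^n$. The crucial point is that, by Theorem \ref{thminterval} applied at scale $2^{-n}$ together with the finite-overlap covering of $E$ by $N(E,2^{-n})$ intervals of length $2^{-n}$, one obtains a single-scale estimate of the form $\|M_{E,n}f\|_q \lesssim 2^{n\alpha(1/p,1/q)}\,N(E,2^{-n})^{1/p-1/q}\,\|f\|_p$ for $(1/p,1/q)$ in the relevant range, where $\alpha$ is the gain exponent coming from the fixed-dilation $L^p\to L^q$ bound for the spherical average. (Here the radial structure is essential: it is what lets us use the enlarged region $\Delta(1)$ rather than $\mathcal P(1,1)$.) For radial input one should be able to extract the sharp single-scale bounds from the known $L^p\to L^q$ theory for the local smoothing/fixed-time operator restricted to radial functions, which behaves like a one-dimensional operator with a Bessel-type kernel.

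First I would establish part (iii), the endpoint line $1/p = (d-1)/d$, since it is the most delicate and dictates the shape of the argument. On this line the single-scale gain $2^{n\alpha}$ is exactly compensated, up to a logarithmic factor, by the normalization: one gets $\|M_{E,n}f\|_q \lesssim n^{c}\,\chi^E_{\mathrm M,1}(2^{-n})^{1/p-1/q}\,\|f\|_p$ with the power of $n$ determined by $d$ and $q$ (the $n^{c}$ arising from the failure of a fixed-time endpoint bound, analogous to the $L^{d/(d-1)}\to L^{d/(d-1)}$ theory). Summing in $n$ and optimizing, I expect the condition $\sup_\delta (\log\tfrac1\delta)^{q/d}\chi^E_{\mathrm M,1}(\delta)<\infty$ to be exactly what makes $\sum_n n^{c}\chi^E_{\mathrm M,1}(2^{-n})^{1/p-1/q}$ summable after the interpolation/real-interpolation bookkeeping; getting the precise exponent $q/d$ will require care with how the $\ell^q$ (rather than $\ell^\infty$ or $\ell^p$) summation over $n$ interacts with the logarithmic loss. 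For necessity I would test against the standard radial example (a sum of characteristic functions of thin annuli at the scales realizing $N(E,2^{-n})$, suitably weighted) and read off that \eqref{minlogloss} is forced.

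With (iii) in hand, parts (i) and (ii) follow by interpolating the endpoint line estimate against the interior/vertex estimates. For (i): assumption \eqref{minassm} says $N(E,\delta)\lesssim \delta^{-\beta}$, so on the segment $(O,Q_1(\beta))$ and at $Q_2(\beta)$ one checks that the single-scale gain strictly dominates $N(E,2^{-n})^{1/p-1/q}=2^{n\beta(1/p-1/q)}$, making $\sum_n 2^{-n\eta}$ geometrically convergent for some $\eta>0$; this gives the closed triangle $\Delta(\beta)$, including the boundary segment $[Q_1(\beta),Q_2(\beta)]$, by the reverse: at those boundary points the gain is only critical but the extra room in $N(E,\delta)\le c\delta^{-\beta}$ (versus $\chi^E_{\mathrm M,1}$ unbounded) saves summability. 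For (ii): when \eqref{minassm} fails, $\chi^E_{\mathrm M,\beta}$ is unbounded along a sequence $\delta_j\to 0$; the same radial test function at those scales shows $[Q_1(\beta),Q_2(\beta)]\not\subset\mathcal T_E^{\mathrm{rad}}$, while the interior and the two edges emanating from $O$ survive by the non-endpoint argument (this is the analogue of the passage \eqref{endpoint-genericE}$\Rightarrow$\eqref{nonendpoint-genericE}, cf.\ the proof of Corollary \ref{cor2D-R3}). The sharpness of the outer boundary of $\Delta(\beta)$ not meeting $O$ is inherited from the sharpness in Theorem \ref{thminterval} via the same rescaled examples.

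The main obstacle I anticipate is pinning down the exact power of the logarithm in the single-scale estimate on the critical line and propagating it correctly through the $\ell^q$ summation to land on the exponent $q/d$ in \eqref{minlogloss}; everything else is a fairly standard scale-decomposition-plus-interpolation scheme, but the endpoint logarithmic bookkeeping (and matching it with a sharp counterexample) is where the real work lies.
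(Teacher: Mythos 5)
Your plan rests on a single-scale estimate $\|M_{E,n}f\|_q\lesssim 2^{n\alpha(1/p,1/q)}N(E,2^{-n})^{1/p-1/q}\|f\|_p$ for radial $f$, and this is precisely the step that is missing: Theorem \ref{thminterval} is a global statement about $E=[1,2]$, and there is no rescaling that turns the maximal operator over a dilation interval of length $2^{-n}$ (or a frequency-localized piece at scale $2^n$) back into the operator of Theorem \ref{thminterval} while preserving radiality about the origin; you also never specify how $M_E$ acting on a general radial $f$ decomposes into the pieces $M_{E,n}$. The paper avoids this entirely: it uses Leckband's one-dimensional kernel formula \eqref{radialformula}--\eqref{kernel} and the pointwise bound \eqref{pointwise-higher} to reduce to a 1D operator, and then decomposes the \emph{radial variable} $r$ according to its distance to $E$ (the sets $D_n$, with $|D_n|\lesssim 2^{-n(1-\be)}$ by Lemma \ref{lem-equiv}), not the dilation parameter into scales.

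Even granting your single-scale bounds, two of your conclusions do not follow. For part (i), on the segment $[Q_1(\be),Q_2(\be)]$ the per-scale bound is exactly $O(1)$ whenever $N(E,2^{-n})\sim 2^{n\be}$, which is perfectly compatible with \eqref{minassm} (take a self-similar set of dimension $\be$); there is no ``extra room'' to save the divergent sum $\sum_n O(1)$, so your scheme cannot yield strong type at $Q_1(\be)$ and $Q_2(\be)$. The paper gets these endpoints from Proposition \ref{prop-beta}: after the distance-to-$E$ decomposition the estimate has a convolution structure in $n-l$, and Young's inequality applies because the kernel $2^{-l(d/p'-1)}$ is summable, i.e.\ the summability comes from $p_0,p_1<\tfrac{d}{d-1}$ (equivalently $\be<1$), not from slack in $N(E,\de)$. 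For part (iii), the ``logarithmic bookkeeping'' you defer is the entire content: the sufficiency in the paper is a specific Abel-summation/discrete-Hardy argument using $|W_n|\lesssim n^{-q/d}$, and the necessity requires the Stein-type radial test function $|x|^{1-d}\chi_{[2^{-10n},1]}(|x|)$, which gives $M_Ef\gtrsim n$ on the whole $2^{-n}$-neighborhood of $E$ and hence $|W_n|\lesssim n^{-q/d}$, matching \eqref{minlogloss} exactly; your proposed weighted-annuli example is not shown to produce the exponent $q/d$. (Your treatment of part (ii) — necessity of \eqref{minassm} on the segment from the thin-annuli example, and sufficiency by running the $\be+\ep$ argument — is in line with the paper.)
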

\begin{rem}
	As before, it is no surprise that $ \mathcal{T}_E^{rad} $ is larger than $\mathcal{T}_E$. When $ d\geq 3 $, a new distinction arises from their geometric shapes. According to \cite[Theorem 1.2]{2023RS}, $ \overline{\mathcal{T}_E}$ can be any closed convex set $ \mathcal{W} $ such that $ \mathcal{P}(\be,\ga)\subset\mathcal{W} \subset \mathcal{P}(\be,\be)$ for some $ 0\leq \be\leq \ga\leq 1 $. However, it follows from Theorem \ref{thminterval} and \ref{thmhigherdim}, $\overline{\mathcal{T}_E^{rad}}$ is always a  triangle  for any $ E\subset[1,2] $ when $ d\geq 3 $.
\end{rem}

\begin{figure}
	\begin{tikzpicture}[scale=5]
	\draw (0,0) [->] -- (0,0.85) node [left] {$\frac1q$};
	\draw (0,0) [->] -- (0.85,0) node [below] {$\frac1p$};
	
	\coordinate (O) at (0,0);
	\coordinate (P1) at (2/2.8, 2/2.8);
	\coordinate (P2) at (2.2/3.2, 1/3.2);
	\coordinate (P3) at (3/5, 1/5);	
	\coordinate (Q2) at (6/8.8, 2/8.8);
	
	\fill[gray] (P3) -- (Q2) -- (P2)  -- cycle;	
	
	\fill (O)  circle [radius=.25pt];
	\fill (P1)  circle [radius=.25pt];
	\fill (P2)  circle [radius=.25pt];
	\fill (Q2)  circle [radius=.25pt];
	\fill (P3)  circle [radius=.25pt];
	
	\draw (O) -- (P1)-- (Q2) -- cycle;
	
	\node[below left,font=\small] at (O) {$O$};
	\node[above,font=\small] at (P1) {$P_1(\beta)/Q_1(\beta)$};
	\node[right,font=\small] at (P2) {$P_2(\beta)$};
	\node[right,font=\small] at (Q2) {$Q_2(\beta)$};
	\node[below ,font=\small] at (P3) {$P_3(\gamma)$};
	\end{tikzpicture}
	\caption{When $d\geq3$, the gray triangle shows the distinction between $ \overline{\mathcal{T}_E}$ and $ \overline{\mathcal{T}_E^{rad}}$ for a quasi-Assouad regular set $E$.}\label{fig-E-higher}
\end{figure}
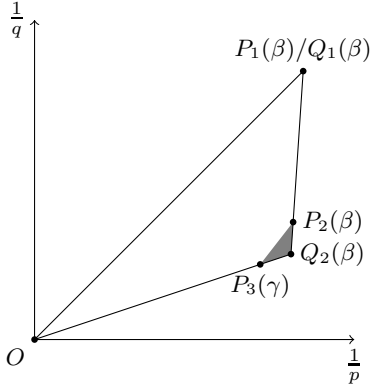

In two dimensions, there is a slight difference for $ \mathcal{T}_E $. The same result (\ref{endpoint-genericE}) holds true in \cite{2021AHRS} under the assumptions $ \sup_{0<\de<1}\chi^E_{\mathrm{M},\beta}(\delta)<\infty $  with $ 0\leq \be<1 $ and $ \sup_{0<\de<1}\chi^E_{\mathrm{A},\ga}(\delta)<\infty$ with $ 0\leq \ga<1/2$. However, it remains open whether the restricted weak type estimate holds at the point $ P_3(\ga) $ when $ 1/2\leq \ga<1$. Once this estimate can be verified, we can easily obtain almost all strong type estimates on the off-diagonal boundaries. Nevertheless, Roos--Seeger \cite{2023RS} got the sharp $ L^p-$improving bounds for any  quasi-Assouad regular set $ E $ with  $\be= \dim_{\mathrm{M}}\!E$ and $\ga=\dim_{\mathrm{qA}}\!E$, in the sense that $ \overline{\mathcal{T}_E}=\mathcal{P}(\be,\ga)$. 

Now we consider $\mathcal{T}_E^{rad} $ in two dimensions. Unlike the high-dimensional case for $\mathcal{T}_E^{rad} $, the relevant properties of $ N(E,\de) $ alone are not sufficient to determine the range of $ L^p-$improving estimates for circular maximal operator $ M_E $ on radial functions. It turn out that $ \mathcal{T}_E^{rad} $ when $ d=2 $ is also closely related to the (quasi-)Assouad dimension of $ E$. Similar to the previous mentioned $ \mathcal{T}_E $ in two dimensions, we also need to conduct a case-by-case analysis. Specifically, the critical case for classification is when $ 2\dim_{\mathrm{qA}}\!E $ equals $\dim_{\mathrm{M}}\!E+1$. Before this,  we introduce some notation. For $ d=2 $ and $ 1\leq \be+1\leq 2\ga\leq 2 $, let
\[Q_3(\be, \ga)=\big(\tfrac{2-\be(1-\theta)}{2(1+\ga \theta)}, \tfrac{1}{2(1+\ga \theta)}\big),\]
where we define $ \theta=\theta(\be, \ga)=\tfrac{1-\be}{2(\ga-\be)}\in [0,1] $ for $ \be<1 $ and $ \theta(1,1)=1$ for $ \be=\ga=1$. Let $ \mathcal{Q}(\be, \ga) $ denote the closed quadrilateral formed by vertices $ O$, $Q_1(\be)$, $Q_3(\be, \ga)$, $Q_2(2\ga-1) $. Note that the quadrilateral $ \mathcal{Q}(\be, \ga)$ degenerates into the triangle $\De(\be)$ when $2\ga=\be+1$.
\begin{thm}\label{thmcircular}
	Let $ d=2 $ and $ E\subset [1,2] $ with $ \dim_{\mathrm{M}}\!E=\be $, $\dim_{\mathrm{qA}}\!E=\ga$, $\dim_{\mathrm{A}}\!E=\ga_*$.
	
	(i) Suppose that $ 2\ga<\be+1 $. Then
	\begin{equation}\label{case1}
		\text{int}(\De(\be))\cup[O, Q_1(\be)\big)\cup \big(O, Q_2(\max\{\be, 2\ga_*-1\})\big)\subset \mathcal{T}_E^{rad}.
	\end{equation}
	
   (ii) Suppose that $ 2\ga\geq\be+1 $. Then
	\begin{equation}\label{case2}
		\text{int}(\mathcal{Q}(\be,\ga))\cup[O, Q_1(\be)\big)\cup \big(O, Q_2(2\ga_*-1)\big)\subset \mathcal{T}_E^{rad}.
	\end{equation}
\end{thm}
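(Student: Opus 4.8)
The strategy is to reduce the planar radial problem to a one‑parameter family of one‑dimensional integral operators and then run a multiscale analysis on $E$. For radial $f(x)=f_0(|x|)$ and $r=|x|$, a change of variables in $A_tf(x)=\frac1{2\pi}\int_0^{2\pi}f_0\big(\sqrt{r^2+t^2-2rt\cos\phi}\big)\,d\phi$ yields the explicit formula
\[
A_tf_0(r)=\frac2\pi\int_{|r-t|}^{r+t}\frac{u\,f_0(u)\,du}{\sqrt{\big(u^2-(r-t)^2\big)\big((r+t)^2-u^2\big)}},
\]
so $M_Ef$ is radial and $\|M_Ef\|_{L^q(\R^2)}^q\approx\int_0^\infty\big(\sup_{t\in E}|A_tf_0(r)|\big)^q r\,dr$. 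The ranges $r\gg1$ and $r\ll1$ are disposed of by elementary one‑dimensional arguments (the former is essentially a translation, the latter a maximal average over short intervals centred in $E$), so the core of the matter is a weighted maximal inequality $L^p(r\,dr)\to L^q(r\,dr)$ on $r\sim1$ for $\widetilde M_Eg(r)=\sup_{t\in E}\big|\int K(r,t,u)\,g(u)\,du\big|$ with $K(r,t,u)=|u^2-(r-t)^2|^{-1/2}|(r+t)^2-u^2|^{-1/2}$. The factor $|(r+t)^2-u^2|^{-1/2}$ is a harmless half‑order fractional integration in the far variable; everything of interest is carried by the degeneracy of $|u^2-(r-t)^2|^{-1/2}$ along $u=|r-t|$, i.e.\ near $r=t$.

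I would then decompose $E$ by dilation scales: fix $\delta=2^{-k}$, cover $E$ by $N(E,\delta)$ intervals of length $\delta$, and group these according to a coarser scale $\delta^\theta$ for a parameter $\theta\in[0,1]$ to be chosen. A telescoping/Sobolev argument in $t$ on each $\delta$‑interval $J$ reduces $\widetilde M_E$ to single‑time operators at one reference dilation per unit‑length piece together with local‑smoothing‑type bounds $L^p_r\to L^q_rL^2_t(J)$ for $A_t$ and $\partial_tA_t$. The new input is that for radial functions these single‑scale bounds follow directly from the explicit kernel, and are genuinely stronger than the $\tfrac{d-1}2=\tfrac12$‑order smoothing available for general functions — there is no Knapp obstruction for radial $f$, which is the whole source of the radial improvement. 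This yields, for a single $\delta$‑interval $J$,
\[
\big\|\sup_{t\in J}|A_tf_0|\big\|_{L^q(r\,dr)}\lesssim_\epsilon\delta^{-\kappa(1/p,1/q)+\epsilon}\,\|f_0\|_{L^p(r\,dr)}
\]
with the sharp exponent $\kappa$ — and with a genuine $\epsilon$‑ (or logarithmic) loss, rather than a clean power, along the critical line, which is why the theorem asserts only the interior together with the two half‑open edges, in exact parallel with the absence of the closed segment $[Q_1(1),Q_2(1)]$ in Theorem \ref{thminterval}.

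It then remains to sum over $\delta$‑intervals and over $k$. On the diagonal near $Q_1(\be)$ the exponent $\kappa$ has a strictly positive $L^p$‑gain for $p$ below the critical value, so $\sum_kN(E,2^{-k})\,2^{-k(\kappa-\epsilon)}\lesssim\sum_k2^{k(\be+\epsilon-\kappa)}<\infty$ exactly on $[O,Q_1(\be))$, using only $\dim_{\mathrm{M}}E=\be$. On the line $\{q=2p\}$ the relevant quantity is the number of $\delta$‑subintervals inside a common interval: this is controlled by $\dim_{\mathrm{A}}E=\ga_*$ on the boundary edge (producing the endpoint $Q_2(2\ga_*-1)$, or $Q_2(\max\{\be,2\ga_*-1\})$ once the global count is also taken into account) and by $\dim_{\mathrm{qA}}E=\ga$ in the interior (producing $Q_2(2\ga-1)$). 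To reach the intermediate vertex $Q_3(\be,\ga)$ one interpolates the single‑scale estimates of $Q_1$‑ and $Q_2$‑type at the ratio dictated by $\theta$, using $N(E,\delta)\le N(E,\delta^\theta)\cdot\sup_{|I|=\delta^\theta}N(E\cap I,\delta)$ with the first factor controlled by $\dim_{\mathrm{M}}E=\be$ and the second by $\overline\dim_{\mathrm{A},\theta}E\le\ga$ — valid for every $\theta<1$, since $\overline\dim_{\mathrm{A},\theta}E$ is non‑decreasing in $\theta$ with limit $\dim_{\mathrm{qA}}E$. Optimizing the gluing forces $\theta=\tfrac{1-\be}{2(\ga-\be)}$, which lies in $[0,1]$ precisely when $2\ga\ge\be+1$; in that case $\mathcal Q(\be,\ga)$ is a genuine quadrilateral (case (ii)), whereas if $2\ga<\be+1$ there is no intermediate regime, $Q_3$ collapses, and the admissible region is the triangle $\De(\be)$ with its edge governed by $\ga_*$ (case (i)). Interpolating the (restricted weak type, or $\epsilon$‑loss strong type) estimates at $Q_1$, $Q_2$, $Q_3$ fills the interior, and the two half‑open edges follow from the diagonal and $\{q=2p\}$ summations, which lose no power.

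The hardest part is the sharp single‑scale estimate for the maximal circular average over one $\delta$‑interval of dilations acting on radial functions, around the critical line — precisely where the non‑radial theory stalls. For radial $f$ one must extract the stronger smoothing from the explicit kernel, controlling the $|u-|r-t||^{-1/2}$ degeneracy together with the effect of one $t$‑derivative — whose naive kernel has a non‑integrable $|u-|r-t||^{-3/2}$ singularity and so must be treated through the $L^2$/oscillatory structure — with the correct $\epsilon$‑ or logarithmic losses; this is delicate and is the reason the statement is confined to the interior plus the two edges. A secondary difficulty is arranging the two‑scale decomposition and the Assouad‑spectrum bookkeeping so that the critical parameter $\theta=\tfrac{1-\be}{2(\ga-\be)}$, the coordinates of $Q_3(\be,\ga)$, and the dichotomy $2\ga\gtrless\be+1$ all emerge consistently, while keeping straight the distinction between $\dim_{\mathrm{qA}}E$, which governs the interior, and $\dim_{\mathrm{A}}E$, which governs the boundary edge on $\{q=2p\}$.
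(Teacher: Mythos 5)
Your overall frame (reduce to the explicit one-dimensional kernel, decompose $E$ dyadically, track covering numbers, let $\dim_{\mathrm{M}}$ govern the diagonal edge and Assouad-type dimensions the edge $q=2p$) is in the right spirit, but the structural placement of the key mechanism is wrong, and this is a genuine gap rather than a presentational difference. You dismiss the regime $r\ll 1$ (dilations $t\ge 3r/2$) as an elementary maximal average and locate all the difficulty, including the source of the $\gamma$- and $\gamma_*$-dependence, in the near-diagonal regime $r\sim t\sim 1$. In the paper it is exactly the opposite: the near-diagonal operators $\mathfrak M_{E,p},\widetilde{\mathfrak M}_{E,p}$ (and $R_{1,E},R_{2,E}$) are controlled using only the Minkowski characteristic and already yield the full triangle $\De(\be)$ minus a segment (Propositions \ref{prop2D-R1} and \ref{prop2D-M}), while the quasi-Assouad/Assouad dependence enters solely through the small-$r$ operators $R_{3,E},R_{4,E}$: there the integration interval $[t-r,t+r]$ has length $2r\ll 1$, so the Schur-type $L^1$ overlap count in \eqref{keyestimates for R3} is the \emph{local} covering number $\sup_{|I|=2^{-k}}N(E\cap I,2^{-m-k})$ with $|I|\sim r$, over all ratios of scales. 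That this regime cannot be handled "elementarily" is forced by the necessity results: the annulus example behind Proposition \ref{prop-2DLorentz-necessary} and inequality \eqref{ineq-crucial} lives at small radii and shows the constraint $p\ge 1+\ga_*$ on $q=2p$ is real; if your small-$r$ treatment needed only $N(E,\de)$, you would be proving a region strictly larger than $\mathcal Q(\be,\ga)$, contradicting Theorem \ref{thmnecessary} for quasi-Assouad regular $E$ with $2\ga>\be+1$.

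Relatedly, the two devices you put at the center are not what carries the sufficiency. The vertex $Q_3(\be,\ga)$ does not arise from optimizing a two-scale decomposition $N(E,\de)\le N(E,\de^\theta)\sup_{|I|=\de^\theta}N(E\cap I,\de)$ in the near-diagonal summation (note such a product bound can only worsen the global count unless the two factors enter with genuinely different roles, which you do not supply); in the paper it is simply the corner of the intersection $\mathcal Q(\be,\ga)=\De(\be)\cap\tilde{\mathcal Q}(\be,\ga)$ of the two regions produced by the two families of operators, and the Assouad spectrum at $\theta$ is used only for the \emph{necessary} condition. Moreover your "hardest part" — a sharp single-scale radial local-smoothing bound $L^p_r\to L^q_rL^2_t(J)$ for $A_t$ and $\pa_tA_t$ with an unspecified exponent $\kappa(1/p,1/q)$ — is left entirely as a black box, whereas the paper needs no $t$-Sobolev or local-smoothing input at all: everything follows from the kernel's square-root singularities via dyadic decompositions, $L^1$/$L^\infty$ interpolation with covering numbers, and Bourgain's interpolation trick at the endpoints (Proposition \ref{prop2D-R3}), after which the $\be<1$ case of Theorem \ref{thmcircular} is the union of Corollaries \ref{cor2D-R1}, \ref{cor2D-M}, \ref{cor2D-R3} with $\be+\ep$, $\ga+\ep$, $\ga_*+\ep$, and the $\be=1$ case follows from $M_Ef\le M_{[1,2]}f$. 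As written, your plan both omits the part of the argument where the theorem's novel content resides and rests its central regime on an unproven estimate.
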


Besides the easy inclusion $  \mathcal{T}_E^{rad}\subset \De(\be)$, surprisingly, we find some extra necessary conditions related to (quasi-)Assouad dimension when $ d=2 $. Here we call $ M_E $ is of radial strong type $ (p,q) $ if $ (1/p,1/q) \in \mathcal{T}_E^{rad}$ and of radial restricted weak type $ (p,q) $ if $M_E $ is bounded from $ L^{p,1}_{rad}$ to $ L^{q,\infty}$.
\begin{thm}\label{thmnecessary}
		Let $ d=2 $ and $ E\subset [1,2] $ with $ \dim_{\mathrm{M}}\!E=\be $, $\dim_{\mathrm{qA}}\!E=\ga$, $\dim_{\mathrm{A}}\!E=\ga_*$. If $ M_E $ is of  radial restricted weak type $ (p,2p) $, then $ p\geq \max\{\tfrac{\be+3}{2},\ga_*+1\} $. Additionally,  suppose that $ E $ is quasi-Assouad regular and  $ 2\ga\geq\be+1 $. Then $ \mathcal{T}_E^{rad}\subset \mathcal{Q}(\be,\ga)$.
\end{thm}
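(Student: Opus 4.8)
The plan is to prove Theorem~\ref{thmnecessary} in two independent parts: first the lower bounds on $p$ for radial restricted weak type $(p,2p)$ estimates, which hold for \emph{any} $E\subset[1,2]$, and then the upper bound $\mathcal{T}_E^{rad}\subset\mathcal{Q}(\be,\ga)$ under the quasi-Assouad regularity assumption. For the first part, I would construct two families of counterexamples, each a radial function $f$ concentrated near a single sphere. For the bound $p\geq\tfrac{\be+3}{2}$: since $\dim_{\mathrm{M}}E=\be$, for a sequence $\de\to 0$ we have $N(E,\de)\gtrsim\de^{-\be+o(1)}$, so $E$ contains $\sim\de^{-\be}$ points that are $\de$-separated at scale $\de$; taking $f=\mathbf 1_{A(1,\de)}$ the indicator of a thin annulus of width $\de$ around the unit circle and evaluating $M_Ef$ on an annulus of radius $\sim 1$ and width $\sim\de$ (exploiting that for a radial function, $A_tf(x)$ depends only on $|x|$ and picks up the overlap of translated circles), one gets $M_Ef\gtrsim$ a constant on a union of $\sim N(E,\de)$ annuli of width $\de$, whose total measure is $\sim\de N(E,\de)$. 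Comparing $\|M_Ef\|_{q,\infty}$ with $\|f\|_{p,1}\sim\de^{1/p}$ at $q=2p$ and letting $\de\to0$ forces the stated inequality; this is essentially the radial refinement of the Seeger--Wainger--Wright / Roos--Seeger examples, where radiality is what improves the exponent relative to the Knapp-type non-radial example. For the bound $p\geq\ga_*+1$: use the definition of the Assouad dimension — there exist intervals $I$ and scales $\de<|I|$ with $N(E\cap I,\de)\gtrsim(\de/|I|)^{-\ga_*+o(1)}$ — and construct $f=\mathbf 1_{A(1,\de)}$ again, but now exploit the \emph{focusing} phenomenon: the dilates $t\in E\cap I$ of a $\de$-annulus, when the function is radial, can be arranged so that $M_Ef$ is large on an annulus whose width is governed by $|I|$ rather than $1$, giving a different relationship between the level set of $M_Ef$ and $\|f\|_{p,1}$; optimizing over the choice of $I$ and $\de$ yields $p\geq\ga_*+1$. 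This second example is the two-dimensional analogue of the $P_3(\ga_*)$-type example from \cite{2021AHRS}, and is the one genuinely new ingredient.

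For the second part, $\mathcal{T}_E^{rad}\subset\mathcal{Q}(\be,\ga)$ under quasi-Assouad regularity and $2\ga\geq\be+1$, I would argue that the boundary of $\mathcal{Q}(\be,\ga)$ consists of four edges, and it suffices to produce, for each edge, a radial counterexample showing that no point strictly outside that edge lies in $\mathcal{T}_E^{rad}$. The edges $[O,Q_1(\be)]$ and $[O,Q_2(2\ga-1)]$ are handled by scaling/dimensional analysis: the diagonal edge through $Q_1(\be)$ reflects the $L^p$-boundedness threshold $p(E)$ of Seeger--Wainger--Wright restricted to $[1,2]$, recovered from the $\be$-Minkowski lower bound example above; the edge through $Q_2(2\ga-1)$ is exactly the $p\geq 2\ga_*-1+\cdots$ constraint from the first part specialized using $\ga_*=\ga$ (note: quasi-Assouad regularity, together with $2\ga\geq\be+1$, forces $\theta(\be,\ga)\leq 1-\be/\ga$ so that on the relevant range $\overline\dim_{\mathrm{A},\theta}E=\ga$, hence effectively $\ga_*$ behaves like $\ga$ at the scales that matter). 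The genuinely new edge is the one through the new vertex $Q_3(\be,\ga)$, connecting $Q_1(\be)$ to $Q_2(2\ga-1)$. For this I would build a two-scale radial counterexample: take $\de$ small, let $\rho=\de^\theta$ with $\theta=\theta(\be,\ga)$, cover $E$ at scale $\rho$ using $\sim\rho^{-\be}$ intervals of length $\rho$ (Minkowski bound, essentially sharp along a subsequence), and inside each such interval $I$ use $N(E\cap I,\de)\gtrsim(\de/\rho)^{-\ga}$ (Assouad-spectrum bound at exponent $\theta$, which is an equality here by quasi-Assouad regularity). The corresponding radial test function is an indicator of a $\de$-annulus; $M_Ef$ is then estimated from below by a superposition of the coarse-scale focusing (governed by $\rho$ and $\be$) and the fine-scale focusing (governed by $\de/\rho$ and $\ga$), and balancing the two scales produces precisely the exponents defining $Q_3(\be,\ga)$ — the weight $\tfrac{1-\be}{2(\ga-\be)}$ on $\theta$ is exactly what makes the two contributions match. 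Letting $\de\to0$ along the extremal subsequence rules out every $(1/p,1/q)$ above the line through $Q_3$.

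I expect the main obstacle to be the $Q_3(\be,\ga)$ edge: correctly setting up the two-scale radial example so that the fine-scale and coarse-scale contributions to $M_Ef$ are \emph{both} realized simultaneously on a set of the right measure. The subtlety is that a radial function's spherical average at dilation $t$ and point $|x|=r$ is (up to smooth weights) supported where $|r-t|\lesssim$ width, and the key computation is to track, as $t$ ranges over $E\cap I$ for the various coarse intervals $I$ and as $r$ ranges over a carefully chosen annulus, how many dilates contribute and with what amplitude — this is where the radial improvement over the non-radial Knapp example comes from, and where the Assouad-spectrum bound (rather than plain Minkowski or plain Assouad) enters. A secondary technical point is verifying that quasi-Assouad regularity with $2\ga\geq\be+1$ guarantees $\theta(\be,\ga)\in[1-\be/\ga,1]$ (or the appropriate endpoint behavior when $\be=\ga=1$), so that the equality $\overline\dim_{\mathrm{A},\theta}E=\ga$ is available at exactly the scale separation $\rho=\de^\theta$ we use; I would dispatch this with a short lemma before the main construction. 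The passage from restricted weak type to strong type on the open region, and the reduction of "$\mathcal{T}_E^{rad}\subset\mathcal{Q}$" to the four edge-counterexamples, is standard convexity/interpolation and I would only sketch it.
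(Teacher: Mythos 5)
Your overall strategy for the Assouad-type bounds --- radial annulus test functions exploiting the singularity of the two-dimensional kernel near the center, leading to constraints on the local covering numbers $N(E\cap I,\de)$ --- is indeed the mechanism the paper uses (it is packaged there as the single inequality (\ref{ineq-crucial})). But the concrete constructions you describe have gaps. For $p\geq\tfrac{\be+3}{2}$: with $f=\chi_{A(1,\de)}$ and $|x|\sim 1$, the circular average of a width-$\de$ annulus is never $\gtrsim 1$; it is $O(\sqrt{\de})$ at tangency ($|r-t|\approx 1$) and $O(\de)$ otherwise, so your level-set claim fails, and even with the correct level $\sqrt\de$ on a set of measure $\sim\de N(E,\de)$ and $\|f\|_{p,1}\sim\de^{1/p}$ you only obtain $\tfrac12+\tfrac{1-\be}{2p}\geq\tfrac1p$, i.e.\ $p\geq 1+\be$, which is strictly weaker than $\tfrac{\be+3}{2}$ when $\be<1$. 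The paper instead gets this bound from the standard examples of Sections \ref{stand2}--\ref{stand3} (the $\de$-disc at the origin, level $\sim\de$ on $\sim N(E,\de)$ unit-radius annuli, with $\|f\|_{p,1}\sim\de^{2/p}$). For $p\geq\ga_*+1$ your idea is right in spirit, but the annulus must be placed at radius $u=\inf I$, \emph{adapted} to the interval $I$ witnessing the Assouad behavior, so that the evaluation radii $t-u\sim|I|$ are small and the kernel singularity yields $M_Ef\gtrsim(\de/|I|)^{1/2}$ on annuli of radius $\sim|I|$, total measure $\sim\de\,|I|\,N(E\cap I,\de)$; keeping $f$ fixed at radius $1$ destroys the $|I|^{-1/2}$ gain and the local covering number never enters.

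The more serious gap is the $Q_3(\be,\ga)$ edge. Your two-scale superposition is both unjustified and unnecessary. Unjustified: $\dim_{\mathrm{M}}E=\be$ and $\overline\dim_{\mathrm{A},\theta}E=\ga$ only provide extremal covering behavior along \emph{some} sequence of scales and for \emph{some} interval $I$; they do not give, at coupled scales $\rho=\de^\theta$, the Minkowski lower bound $N(E,\rho)\gtrsim\rho^{-\be+o(1)}$ and the fine-scale bound $N(E\cap I,\de)\gtrsim(\de/\rho)^{-\ga+o(1)}$ simultaneously, let alone inside every coarse interval. Moreover, even granting this, the coarse and fine contributions do not superpose at a single level: the focusing amplitude is $\sim(\de/\mathrm{dist}(I,u))^{1/2}$, so at any fixed level only $O(1)$ of your $\rho^{-\be}$ coarse intervals contribute. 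Unnecessary: the paper derives the line through $Q_3(\be,\ga)$ and $Q_2(2\ga-1)$ from the \emph{same} single-interval inequality (\ref{ineq-crucial}), applied to intervals with $|I|\geq\de^\theta$ furnished by the definition of the upper Assouad spectrum, using quasi-Assouad regularity only to know $\overline\dim_{\mathrm{A},\theta}E=\ga$ for $\theta>1-\be/\ga$, and then letting $\theta\to 1-\be/\ga$. Thus $\be$ enters through this restriction on $\theta$, not through a coarse covering count, and the relevant limiting exponent is $1-\be/\ga$, not $\theta(\be,\ga)=\tfrac{1-\be}{2(\ga-\be)}$ (which is merely a parameter in the formula for the vertex $Q_3$); your auxiliary claim that $\theta(\be,\ga)\leq 1-\be/\ga$ is also false in general, e.g.\ $\be=\tfrac12$, $\ga=\tfrac45$. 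The remaining edges and the reduction of the containment to edge constraints are standard, as you say.
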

\begin{rem}
	Let $ E $ satisfy all the assumptions in Theorem \ref{thmnecessary}. By combining with part (ii) of Theorem \ref{thmcircular}, we obtain that  $\overline{\mathcal{T}_E^{rad}}=\mathcal{Q}(\be,\ga)$. Compared to the high-dimensional case, the radial improvement in two dimensions is somewhat narrowed. See Figure \ref{fig-E-higher} and \ref{fig-E-2D} for more details.
\end{rem}

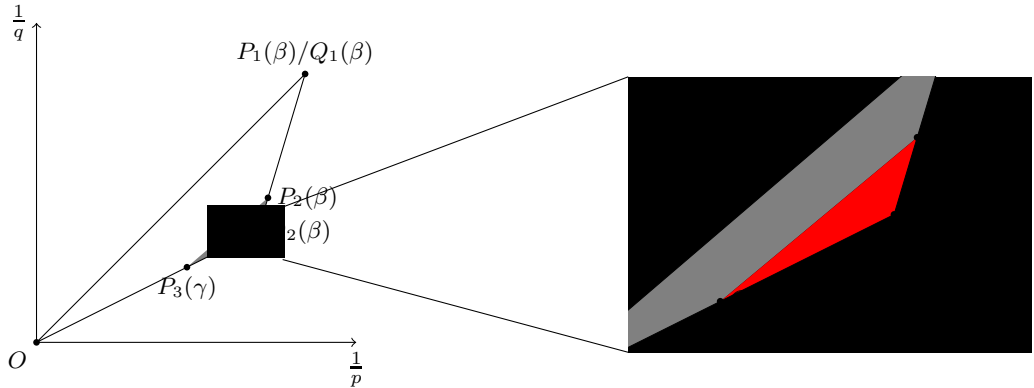
\begin{figure}[ht]
	\begin{tikzpicture}[scale=5]
	\draw (0,0) [->] -- (0,0.85) node [left] {$\frac1q$};
	\draw (0,0) [->] -- (0.85,0) node [below] {$\frac1p$};
	
	\coordinate (O) at (0,0);
	\coordinate (P1) at (1/1.4, 1/1.4);
	\coordinate (P2) at (1.6/2.6,1/2.6);
	\coordinate (P3) at (2/5, 1/5);	
	\coordinate (Q2b) at (2/3.4, 1/3.4);
	\coordinate (Q2g) at (0.5, 0.25);
	\coordinate (Q3) at (1.8/3, 1/3);
	
	\fill[gray] (P3) -- (Q2g) -- (Q3) -- (P2)  -- cycle;
	\fill[red] (Q2g) -- (Q2b) -- (Q3)  -- cycle;
	
	\fill (O)  circle [radius=.25pt];
	\fill (P1)  circle [radius=.25pt];
	\fill (P2)  circle [radius=.25pt];
	\fill (P3)  circle [radius=.25pt];
	\fill (Q2b)  circle [radius=.25pt];
	\fill (Q2g)  circle [radius=.25pt];
	\fill (Q3)  circle [radius=.25pt];
	
	\draw (O) -- (P1)-- (Q3) --(Q2g)-- cycle;
	\draw [dashed,opacity=.3]  (Q2g)-- (Q3)--(Q2b)--cycle;
	
	\node[below left,font=\small] at (O) {$O$};
	\node[above,font=\small] at (P1) {$P_1(\beta)/Q_1(\beta)$};
	\node[right,font=\small] at (P2) {$P_2(\beta)$};
	\node[right,font=\small] at (Q2b) {$Q_2(\beta)$};
	\node[below ,font=\small] at (P3) {$P_3(\gamma)$};
	
	\coordinate (LL) at (2/3.4-0.135, 1/3.4-0.07);
	\coordinate (HH) at (2/3.4+0.07, 1/3.4+0.07);
	\coordinate (HL) at (2/3.4+0.07, 1/3.4-0.07);
	\coordinate (LH) at (2/3.4-0.135, 1/3.4+0.07);
	\fill [opacity=.05] (LL) rectangle (HH);
	
	\begin{scope}[xshift=-0.8cm, yshift=-1.2cm, scale=5.25]
	\coordinate (SO) at (0,0);
	\coordinate (SP1) at (1/1.4, 1/1.4);	
	\coordinate (SP2) at (1.6/2.6,1/2.6);
	\coordinate (SP3) at (2/5, 1/5);		
	\coordinate (SQ2b) at (2/3.4, 1/3.4);
	\coordinate (SQ2g) at (0.5, 0.25);
	\coordinate (SQ3) at (1.8/3, 1/3);
	
	\coordinate (SLL) at (2/3.4-0.135, 1/3.4-0.07);
	\coordinate (SHH) at (2/3.4+0.07, 1/3.4+0.07);
	\coordinate (SHL) at (2/3.4+0.07, 1/3.4-0.07);
	\coordinate (SLH) at (2/3.4-0.135, 1/3.4+0.07);
	\draw[solid,opacity=.1] (HL)--(SLL);
	\draw[solid,opacity=.1](HH)--(SLH);
	\fill [opacity=.02] (SLL) rectangle (SHH);
	\clip (SLL) rectangle (SHH);
	
	\fill[gray] (SP3) -- (SQ2g) -- (SQ3) -- (SP2)  -- cycle;
	\fill[red] (SQ2g) -- (SQ2b) -- (SQ3)  -- cycle;
	
	\draw (SP3)--(SP2);
	\draw (SO)--(SQ2b)--(SP1);

	\fill (SQ2b)  circle [radius=.05pt];
	\fill (SQ2g)  circle [radius=.05pt];
	\fill (SQ3)  circle [radius=.05pt];
	
	\node[right,font=\small] at (SQ2b) {$Q_2(\beta)$};
	\node[right,font=\small] at (SQ3) {$Q_3(\beta,\gamma)$};
	\node[right,font=\small] at (SQ2g) {$Q_2(2\gamma-1)$};
	\end{scope}
	\end{tikzpicture}
	\caption{For a quasi-Assouad regular set $E$ with $ 2\gamma>\beta+1$, the red triangle is excluded from the radial improvement for the circular maximal operator $M_E$, highlighting the difference between two-dimensional and higher-dimensional cases.}\label{fig-E-2D}
\end{figure}

If we consider a much larger class of $ E $, the geometric shape of $ \overline{\mathcal{T}_E^{rad}}$ can be more complicated. For example, we can get the following corollary for finite unions of quasi-Assouad regular sets. It is an analogue of \cite[Theorem 1.4]{2023RS} and can be directly obtained from Theorem \ref{thmcircular} and \ref{thmnecessary}. 
\begin{cor}
	Let $ d=2 $ and $ E=\cup_{j=1}^mE_j$ where $ E_j\subset [1,2] $ is a quasi-Assouad regular set with $\be_j=\dim_{\mathrm{M}}\!E$, $\ga_j=\dim_{\mathrm{qA}}\!E$ and $ 2\ga_j\geq \be_j+1$. Then $\overline{\mathcal{T}_E^{rad}}=\cap_{j=1}^m \mathcal{Q}(\be_j,\ga_j)$.
\end{cor}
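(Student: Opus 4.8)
The plan is to derive the corollary directly from Theorems~\ref{thmcircular} and~\ref{thmnecessary}, using only the elementary pointwise comparisons among $M_E$ and the $M_{E_j}$. Write $\mathcal Q_j:=\mathcal Q(\be_j,\ga_j)$, where $\be_j=\dim_{\mathrm{M}}\!E_j$ and $\ga_j=\dim_{\mathrm{qA}}\!E_j$. Since each $E_j$ is quasi-Assouad regular with $2\ga_j\ge\be_j+1$, the Remark after Theorem~\ref{thmnecessary} gives $\overline{\mathcal T_{E_j}^{rad}}=\mathcal Q_j$; in particular each $\mathcal Q_j$ is a closed convex subset of $[0,1]^2$.

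For the inclusion $\overline{\mathcal T_E^{rad}}\subseteq\bigcap_{j=1}^m\mathcal Q_j$: because $E_j\subset E$ we have $M_{E_j}f\le M_Ef$ pointwise, hence $\mathcal T_E^{rad}\subseteq\mathcal T_{E_j}^{rad}$ and, taking closures, $\overline{\mathcal T_E^{rad}}\subseteq\overline{\mathcal T_{E_j}^{rad}}=\mathcal Q_j$ for every $j$; intersecting over $j$ yields the claim. (One could equally invoke only the second assertion of Theorem~\ref{thmnecessary}.) For the reverse inclusion, note that $E=\bigcup_jE_j$ gives $M_Ef\le\sum_{j=1}^mM_{E_j}f$ pointwise, so the triangle inequality in $L^q$ (valid since $q\ge 1$ on $[0,1]^2$) yields $\bigcap_{j=1}^m\mathcal T_{E_j}^{rad}\subseteq\mathcal T_E^{rad}$. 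Applying part~(ii) of Theorem~\ref{thmcircular} to each $E_j$ (legitimate since $2\ga_j\ge\be_j+1$) gives $\mathrm{int}(\mathcal Q_j)\subseteq\mathcal T_{E_j}^{rad}$, so
\[
\mathrm{int}\Bigl(\bigcap_{j=1}^m\mathcal Q_j\Bigr)=\bigcap_{j=1}^m\mathrm{int}(\mathcal Q_j)\subseteq\bigcap_{j=1}^m\mathcal T_{E_j}^{rad}\subseteq\mathcal T_E^{rad},
\]
using that the interior of a finite intersection is the intersection of the interiors. Taking closures, it remains to check that $\bigcap_j\mathcal Q_j$ equals the closure of its own interior, which holds because it is convex and has nonempty interior: from the explicit vertex formulas, the edges $[O,Q_1(\be_j)]$ and $[O,Q_2(2\ga_j-1)]$ lie respectively on the $j$-independent lines $\{\tfrac1q=\tfrac1p\}$ and $\{\tfrac1q=\tfrac12\tfrac1p\}$, and since $\be_j\le\ga_j\le1$ one checks that every $\mathcal Q_j$ contains the fixed two-dimensional cone $R:=\{(\tfrac1p,\tfrac1q):0\le\tfrac1p\le\tfrac12,\ \tfrac12\tfrac1p\le\tfrac1q\le\tfrac1p\}$. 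Combining the two inclusions gives $\overline{\mathcal T_E^{rad}}=\bigcap_{j=1}^m\mathcal Q_j$.

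There is no genuine obstacle here: the analytic content is entirely carried by Theorems~\ref{thmcircular} and~\ref{thmnecessary}, and the only step demanding a moment's care is the final topological one — confirming that the quadrilaterals $\mathcal Q_j$ all share a common cone at the origin, so that $\bigcap_j\mathcal Q_j$ is genuinely two-dimensional and therefore recoverable as the closure of its interior.
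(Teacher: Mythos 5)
Your argument is correct and is essentially the derivation the paper intends: the paper offers no written proof, stating only that the corollary "can be directly obtained" from Theorems \ref{thmcircular} and \ref{thmnecessary}, and your combination of the pointwise comparisons $M_{E_j}f\le M_Ef\le\sum_j M_{E_j}f$ with those two theorems is exactly that route. The only step you assert without proof is the convexity of each quadrilateral $\mathcal{Q}(\be_j,\ga_j)$ (needed both for the cone containment and for recovering $\bigcap_j\mathcal{Q}_j$ as the closure of its interior), but this is the same fact the paper itself uses implicitly in the Remark identifying $\overline{\mathcal{T}_{E_j}^{rad}}=\mathcal{Q}(\be_j,\ga_j)$, so it is not a genuine gap.
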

Finally, we consider the endpoint estimates on the boundaries of $ \mathcal{Q}(\be,\ga) $ or $\De(\be)$ in two dimensions.
\begin{thm}\label{thm2Dendpoint}
		Let $ d=2 $, $ E\subset [1,2] $, $ 0\leq \be<1 $ and $ \be\leq \ga\leq 1 $. Suppose that $ \sup_{0<\de<1}\chi^E_{\mathrm{M},\beta}(\delta)<\infty $, $\sup_{0<\de<1}\chi^E_{\mathrm{A},\ga}(\delta)<\infty $.
		
		(i) If $2\ga<\be+1 $, then $  \De(\be)\subset \mathcal{T}_E^{rad}$.
		
		(ii) If $2\ga\geq \be+1 $, then $\mathcal{Q}(\be,\ga)\setminus\{Q_2(2\ga-1)\}\subset \mathcal{T}_E^{rad}$. Moreover, $ M_E $ is of radial restricted weak type $ (p,q) $ when $ (\tfrac{1}{p},\tfrac{1}{q}) $ is  the point $ Q_2(2\ga-1) $.
\end{thm}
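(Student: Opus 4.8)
The plan is to deduce Theorem~\ref{thm2Dendpoint} from the sharp single-scale radial estimates established earlier in the paper, together with a decomposition of $E$ adapted simultaneously to its Minkowski and Assouad characteristics, and then to close up the boundary of the region by real interpolation. First I would pass, exactly as in the proofs of Theorems~\ref{thminterval}--\ref{thmcircular}, to the one-dimensional reformulation: writing $f=f_0(|\cdot|)$ and using polar coordinates together with the explicit kernel of the circular mean acting on radial functions --- whose only singularities are the inverse-square-root blow-ups at the radii $|r-t|$ and $r+t$ --- the operator $M_E$ on $L^p_{rad}(\R^2)$ is comparable to a one-dimensional maximal operator, realised as the supremum over $t\in E$ of operators with an explicitly computable kernel. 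One then fixes the $\delta$-localised building blocks $\mathcal A_\delta f=\sup_{|t-t_0|\le\delta}|A_tf|$ and invokes the sharp radial $L^p_{rad}\to L^q$ bounds for $\mathcal A_\delta$, recording the exact power of $\delta^{-1}$ and, in particular, the $\log(1/\delta)$ loss that occurs on the critical line $q=2p$; these are the same inputs that produced Theorem~\ref{thminterval} and the non-endpoint inclusions of Theorem~\ref{thmcircular}.

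Next I would reassemble $M_E$ from these pieces. Decompose $E$ over the scales $2^{-n}$: cover $E$ by $\approx N(E,2^{-n})$ intervals of length $2^{-n}$, and --- to exploit the hypothesis $\sup_{0<\delta<1}\chi^E_{\mathrm{A},\gamma}(\delta)<\infty$ --- group these intervals into clusters sitting inside parent intervals $I$ of intermediate length, with at most $N(E\cap I,2^{-n})\lesssim(2^{-n}/|I|)^{-\gamma}$ of them per cluster. Summing the $\mathcal A_{2^{-n}}$-estimates against these multiplicities yields, for each $(1/p,1/q)$, a series in $n$ whose convergence is controlled by $\sup_{0<\delta<1}\chi^E_{\mathrm{M},\beta}(\delta)<\infty$ (which forces $N(E,2^{-n})\approx 2^{n\beta}$) and $\sup_{0<\delta<1}\chi^E_{\mathrm{A},\gamma}(\delta)<\infty$ (which controls the clustering at rate $(2^{-n}/|I|)^{-\gamma}$; note that the latter hypothesis also forces $\dim_{\mathrm{A}}\!E\le\gamma$, so the quasi-Assouad and Assouad dimensions coincide here and $\gamma$ rather than $\gamma_*$ appears in the conclusion). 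In case (i), where $2\gamma<\beta+1$, every such series converges geometrically for $(1/p,1/q)$ throughout the closed triangle $\De(\be)$ --- including both vertices $Q_1(\be)$ and $Q_2(\be)$, where the strictness of $2\gamma<\beta+1$ is precisely what upgrades restricted weak type to strong type on the critical line --- which gives $\De(\be)\subset\mathcal{T}_E^{rad}$; the diagonal vertex $Q_1(\be)$ amounts to $L^{1+\be}_{rad}$-boundedness, for which $\be<1$ and $\sup_{0<\delta<1}\chi^E_{\mathrm{M},\beta}(\delta)<\infty$ supply exactly the summability needed at the critical exponent, in the spirit of the endpoint analysis of Seeger--Wainger--Wright.

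In case (ii), where $2\gamma\ge\beta+1$, the same bookkeeping shows that the series still converge at every point of $\mathcal{Q}(\be,\ga)$ except at the single vertex $Q_2(2\ga-1)$, which lies on the line $q=2p$ and at which the series is exactly logarithmically divergent. There strong type is not available; instead I would run the summation with $L^{p,1}_{rad}$ data, decomposing the radial input into normalised atoms supported on annuli of dyadic width and using an $\ell^1$--$\ell^\infty$ (Bourgain-type) summation trick to absorb the logarithm, obtaining the radial restricted weak type $(p,q)$ bound at $Q_2(2\ga-1)$. Real interpolation between the restricted-weak-type estimates at the four vertices of $\mathcal{Q}(\be,\ga)$ then gives strong type on the open edges and the interior, and combined with the strong-type corners $Q_1(\be)$ and $Q_3(\be,\ga)$ --- handled by the direct, geometrically convergent summation, since neither lies on $q=2p$ nor at the Assouad threshold --- this yields $\mathcal{Q}(\be,\ga)\setminus\{Q_2(2\ga-1)\}\subset\mathcal{T}_E^{rad}$, completing part (ii).

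The main obstacle is twofold. First, one needs the genuinely sharp single-scale radial estimate in the focusing/tangential configuration $r\approx t$ (and for $r$ small), where the two inverse-square-root kernel singularities interact; this is where radiality defeats the general Knapp geometry and where the $\log(1/\delta)$ loss on $q=2p$ is generated, so the exact exponent --- not an $\epsilon$-lossy surrogate --- must be pinned down. Second, the endpoint summation at $Q_2(2\ga-1)$ must land on restricted weak type rather than on a logarithmically lossy strong bound, which forces the Lorentz-space atomic argument together with a careful two-parameter accounting of how the Minkowski scales $2^{-n}$ and the Assouad cluster-lengths $|I|$ balance at the threshold $2\ga=\beta+1$.
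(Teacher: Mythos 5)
Your overall frame (reduce to the Leckband one--dimensional kernel, decompose $E$ dyadically, sum with covering-number weights, and use a Bourgain-type $\ell^1$ interpolation step at the endpoint) is in the right family, but the central step of your summation is asserted rather than proved, and as stated it cannot produce the $\gamma$-dependence of the theorem. If you bound $M_E$ by a sum over a cover of $E$ of single-scale operators $\mathcal A_{2^{-n}}$ and then insert \emph{global} $L^p_{rad}\to L^q$ bounds for $\mathcal A_{2^{-n}}$ times multiplicities, the only quantity that can appear is $N(E,2^{-n})$, i.e.\ the Minkowski characteristic; grouping the $2^{-n}$-intervals into ``clusters inside parent intervals of intermediate length'' does nothing unless you explain why the cluster of $\lesssim (2^{-n}/|I|)^{-\gamma}$ pieces contributes with some orthogonality rather than by the triangle inequality. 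That orthogonality is the missing idea, and it is spatial: the local covering numbers $N(E\cap I,\delta)$ become relevant only in the regime $t\ge 3r/2$, i.e.\ $|x|=r$ small, because there the radial integration interval $[t-r,t+r]$ has length $2r$, so for $r\sim 2^{-k}$ a fixed input annulus in the $s$-variable can interact with at most $\sup_{|I|=2^{-k}}N(E\cap I,2^{-m-k})$ of the $2^{-m-k}$-pieces of $E$. In the paper this is exactly the $L^1\to L^1$ bound for the pieces $R_{3,E}^{k,m}$ (estimate (\ref{keyestimates for R3})), with the parent scale forced to be $|I|\sim 2^{-k}\sim|x|$, not an arbitrary intermediate scale; the vertex $Q_2(2\gamma-1)$ and the quadrilateral $\mathcal Q(\beta,\gamma)$ arise from interpolating the two-parameter decay in $(k,m)$ (Bourgain's trick applied twice), while the regimes $t\le r/2$ and $r\sim t$ (operators $R_{1,E},R_{2,E},\mathfrak M_{E,p},\widetilde{\mathfrak M}_{E,p}$) only ever see the global $N(E,\delta)$ and give the $\Delta(\beta)$-constraints. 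Without localizing in $|x|$ your series simply never sees $\gamma$.

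A second, related gap: the ``sharp single-scale radial estimates for $\mathcal A_\delta$ with exact powers of $\delta^{-1}$ and a $\log(1/\delta)$ loss on $q=2p$'' that your plan takes as input are not available by rescaling Theorem \ref{thminterval}. There is no dilation symmetry acting on $L^p_{rad}(\mathbb R^2)$ that maps a $\delta$-interval of dilations to $[1,2]$ while preserving the measure $r\,dr$ and the kernel singularities, and the behavior of $A_t$ on radial functions is genuinely different near the origin (where the $\gamma$-gain lives) and away from it. So these single-scale bounds would themselves have to be proved by an analysis of the kernel pieces of the same type as Propositions \ref{prop2D-R1}--\ref{prop2D-R3}, at which point one may as well argue as the paper does. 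The remaining ingredients of your plan (interpolation between the corner estimates, the observation that $\Delta(\beta)\subset\tilde{\mathcal Q}(\beta,\gamma)$ when $2\gamma<\beta+1$ and $\mathcal Q(\beta,\gamma)=\Delta(\beta)\cap\tilde{\mathcal Q}(\beta,\gamma)$ when $2\gamma\ge\beta+1$, and restricted weak type only at $Q_2(2\gamma-1)$) are consistent with the paper, but they rest on the two unproved inputs above.
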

\begin{rem}
	If logarithmic decay like $ \log^{-\al}(1/\de) $ with some suitable $ \al>0 $ is imposed on  $ \chi^E_{\mathrm{M},1}(\delta) $ and $ \chi^E_{\mathrm{A},1}(\delta)$, using the methods from Section \ref{sect-circular}, we can obtain the endpoint radial strong type  estimate for some point $ Q$ on the vertical line $ [Q_1(1),Q_2(1)]$. However, the exponent  $ \al>0  $ is far away from being sharp.
\end{rem}
\subsection*{Overview of our argument}
We always denote a radial function  $f$ in $ \R^d $ by $ f(x)=f_0(r)$, where $ r=|x| $. A simple but important observation is  that when $ f $ is a radial function, $ A_t f$ is also  radial. Indeed, it follows from a straightforward computation by Leckband \cite[Lemma 1]{1987Leckband} that 
\begin{equation}\label{radialformula}
A_tf(x)=c_d\int_{|r-t|}^{r+t}K_t(r, s)f_0(s)ds,
\end{equation}
where 
\begin{equation}\label{kernel}
K_t(r,s)=\Bigg[\frac{\sqrt{(r+t)^2-s^2}\sqrt{s^2-(r-t)^2}}{(r+t)^2-(r-t)^2}\Bigg]^{d-3}\frac{s}{(r+t)^2-(r-t)^2}.
\end{equation}
Thus, it suffices to study the maximal estimates for this one-dimensional integral. However, unlike the standard Hardy--Littlewood maximal operator,  the  kernel $ K_t(r,s)$ of this integral is somewhat complicated. Fortunately, in higher dimensions $ d\geq 3$,  $ K_t(r,s)$ is essentially free of singularities. By dividing $ r $ into different cases and performing simple calculations, we  reduce the problem to estimating a few new integrals involving only $ f_0 $ and potential weight $ s^\al $ for some $ \al>0 $.  Among these case, $ r\sim t $  is the most  challenging, as the left endpoint $ |r-t| $ of integral interval can approach zero. To address this, we naturally decompose  the range of $r$ based on its distance from $E$. See Section \ref{sect-higher} for further details.

In two dimensions, $ K_t(r,s)$ exhibits more intricate singularities due to the factors $1/\sqrt{r+t-s}$ and $1/\sqrt{s-|r-t|}$. For simplicity, we focus on the singularities arising from $1/\sqrt{s-|r-t|}$. As before, we divide $ r $ into three cases: $ t\leq r/2$,  $t\geq 3r/2$ and  $r/2<t<3r/2$. For the first case, we perform a dyadic decomposition on both the range of $ r $ and the integral interval $ [r-t,r+t]$.  Using interpolation, we  derive the $ L^p-$improving estimates for the decomposed operators. In this context, the $ L^1\to L^1 $ operator norm involve the covering numbers $ N(E,\de) $. We  proceed similarly in the second case, with a key difference: at this point, the original integration interval $ [t-r, t+r] $ has length $2r$, which can be much smaller than $1$. Consequently, when calculating the multiplicities, local covering numbers $ N(E\cap I,\de)$  naturally arise. This motivates the introduction of  the concept of (quasi-)Assouad dimension. For the final case, the integral includes the singular term $1/\sqrt{s-|r-t|}$ and potentially a weight $ s^\al $ for some $ \al>0 $. Here, we  perform a dyadic decomposition on both  $|r-t|$ and the integral interval $ [|r-t|,r+t]$. However, it requires  a more refined decomposition on the range of $ r$ and $ E $  to ensure that $ |r-t| $ is contained within a binary interval. To achieve this, we essentially follow the argument in \cite[Proposition 5.4]{1997SWW}.

\subsubsection*{Outline} Our paper is organized as follows. In Section \ref{sect-higher}, we consider the three cases $ E=[1,2] $, $ \dim_{\mathrm{M}}\!E<1 $ and $ \dim_{\mathrm{M}}\!E=1 $ in  higher dimensions $ d\geq 3$. In Section \ref{sect-circular}, we establish the more complicated results for the circular maximal operator. Various necessary conditions including the proof of Theorem \ref{thmnecessary} are discussed in Section \ref{sect-nece}.

\subsubsection*{Notation}
For $ d\geq 2$, $ \mu_d $ is a measure on $ \R^+ $ defined by $ d\mu_d=r^{d-1}dr $.  For a set $ A\subset \R^d$, we write $ |A| $ for the usual Lebesgue measure of $A$. We define $ I_k=[2^k, 2^{k+1}] $ for $ k\in\mathbb{Z}$, whose double interval is denoted by $ \tilde{I}_k$. $A\lesssim B $ means that there exists a positive irrelevant constant $ C $ such that $ A\leq CB $. Similarly,  we write $ A\ll B $ to mean that $ A$ is far less than $ B $. We use $ A\sim B $ if $ A\lesssim B $ and $ B\lesssim A $. 

\section{$L^p$-improving bounds in higher dimensions}\label{sect-higher}
In this section, we basically follow the argument in \cite{1997SWW} to obtain the $L^p$-improving estimates for $ M_{E} $ when $ d\geq 3 $. Since we are dealing with the localized maximal operator, the proof here is somewhat simplified compared to the previous one.

\subsection{The case when $ E=[1,2] $} \label{subsec localized in higher}
Here we give a proof of Theorem \ref{thminterval} in higher dimensions $ d\geq 3 $. First, we use a pointwise inequality for $ M_E $ acting on radial functions in \cite{1997SWW}, which is an easy consequence of (\ref{radialformula}) and (\ref{kernel}).
\begin{lem}[\cite{1997SWW}, Lemma 3.1]
		Let $ d\geq 3 $, $ E\subset [1,2] $ and $ f $ is a radial function defined in $ \R^d $. Then
		\begin{equation}\label{pointwise-higher}
			M_Ef(x)\lesssim [\mathfrak{M}_{E,p}g(r)+R_1f_0(r)+R_2f_0(r)],
		\end{equation}
		where
		\[ g(s)=f_0(s)s^{\tfrac{d-1}{p}},\]
		\begin{equation}\label{mainpart}
			\mathfrak{M}_{E,p}g(r)=\chi_{(2/3,4)}(r)\sup_{\substack{t \in E \\ r/2<t<3r/2}}\int_{|r-t|}^{r+t}s^{\tfrac{d-1}{p'}-1}|g(s)|ds,
		\end{equation}
		and
		\begin{align*}
			R_1f_0(r)&=\chi_{[2,\infty)}(r)\sup_{\substack{t \in [1,2] \\ t\leq r/2}}\int_{r-t}^{r+t}|f_0(s)|ds,\\
			R_2f_0(r)&=\chi_{[0,4/3)}(r)\sup_{\substack{t \in [1,2] \\ t\geq 3r/2}}\frac{1}{r}\int_{t-r}^{t+r}|f_0(s)|ds.
		\end{align*}
\end{lem}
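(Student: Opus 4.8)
The plan is to work directly from the radial representation (\ref{radialformula})--(\ref{kernel}) and reduce the claim to pointwise size estimates for the kernel $K_t(r,s)$ on the segment of integration $|r-t|\le s\le r+t$. Since $(r+t)^2-(r-t)^2=4rt$, rewrite
\[
K_t(r,s)=\Bigl[\frac{\sqrt{(r+t)^2-s^2}\,\sqrt{s^2-(r-t)^2}}{4rt}\Bigr]^{d-3}\frac{s}{4rt};
\]
because $d\ge 3$ the exponent $d-3$ is nonnegative, so $K_t(r,s)\ge 0$ and there is nothing singular, and the whole task is one of upper bounds. On $|r-t|\le s\le r+t$ one has $(r+t)^2-s^2=(r+t-s)(r+t+s)\le 2\min(r,t)\cdot 2(r+t)$ and $s^2-(r-t)^2=(s-|r-t|)(s+|r-t|)\le 2\min(r,t)\cdot 2\max(r,t)$, so the bracketed base is at most $\sqrt{(r+t)/\max(r,t)}\le\sqrt2$; a cruder alternative, convenient once $r\sim t$, is $\sqrt{(r+t)^2-s^2}\le r+t$ and $\sqrt{s^2-(r-t)^2}\le s$, giving base $\lesssim (r+t)s/(rt)$.

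Next I would split $M_Ef(x)=\sup_{t\in E}|A_tf(x)|$ into the three sub-suprema over $\{t\in E:\,t\le r/2\}$, $\{t\in E:\,r/2<t<3r/2\}$ and $\{t\in E:\,t\ge 3r/2\}$, using $|A_tf(x)|\le c_d\int_{|r-t|}^{r+t}K_t(r,s)|f_0(s)|\,ds$, and bound each sub-supremum by one of the three terms on the right of (\ref{pointwise-higher}); the $r$-cutoffs will come for free from the constraint $t\in[1,2]$ since the relevant $t$-set is empty otherwise. If $t\le r/2$ then $r\ge 2$ and $s\in[r-t,r+t]\subset[r/2,3r/2]$, so $s/(4rt)\lesssim t^{-1}\le 1$ and, with the uniform bracket bound, $K_t(r,s)\lesssim 1$; hence $|A_tf(x)|\lesssim\int_{r-t}^{r+t}|f_0(s)|\,ds$, and taking the supremum gives $R_1f_0(r)$. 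If $t\ge 3r/2$ then $r\le 4/3$ and $s\in[t-r,t+r]\subset[t/3,5t/3]$, so $s/(4rt)\lesssim r^{-1}$, $K_t(r,s)\lesssim r^{-1}$, and taking the supremum gives $R_2f_0(r)$.

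The middle regime $r/2<t<3r/2$, which forces $r\in(2/3,4)$ and hence $r\sim t\sim 1$ since $t\in[1,2]$, I would handle with the crude bound: the bracket is $\lesssim s^{d-3}$ and $s/(4rt)\lesssim s$, so $K_t(r,s)\lesssim s^{d-2}$ and $|A_tf(x)|\lesssim\int_{|r-t|}^{r+t}s^{d-2}|f_0(s)|\,ds$. Finally, substituting $g(s)=f_0(s)s^{(d-1)/p}$ and using $\tfrac{d-1}{p'}-1+\tfrac{d-1}{p}=d-2$, one has $s^{d-2}|f_0(s)|=s^{(d-1)/p'-1}|g(s)|$, so the integral is exactly the one in (\ref{mainpart}) and the supremum produces $\mathfrak{M}_{E,p}g(r)$. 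Adding the three contributions yields (\ref{pointwise-higher}). I do not expect a genuine obstacle here: the hypothesis $d\ge 3$ is exactly what removes the singularities of $K_t$, and the only care needed is the bookkeeping of the $r$-ranges (to recover the indicator functions) and matching the power of $s$ produced by the kernel in the middle regime with the weight absorbed into $g$.
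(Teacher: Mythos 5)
Your proposal is correct and is precisely the ``easy consequence of (\ref{radialformula}) and (\ref{kernel})'' that the paper leaves to the citation of Seeger--Wainger--Wright: split the supremum according to $t\le r/2$, $r/2<t<3r/2$, $t\ge 3r/2$, note that for $d\ge 3$ the kernel is nonnegative with no singularity, and bound $K_t(r,s)$ by an absolute constant, by $C/r$, and by $Cs^{d-2}$ in the three regimes, the last matching the weight in $\mathfrak{M}_{E,p}$ because $\tfrac{d-1}{p'}-1+\tfrac{d-1}{p}=d-2$. This is the same argument as in the cited Lemma 3.1 of \cite{1997SWW}, so there is nothing to add beyond the harmless bookkeeping of the $r$-cutoffs, which you handle correctly.
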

We have the following $ L^p $-improving estimates for $ R_1 $ and $ R_2 $.
\begin{lem}\label{lemR1}
	For $ 1\leq p\leq q \leq \infty $ and $d\geq 1 $, the operator $ R_1 $ is bounded from $ L^p(\mu_d) $ to $ L^q(\mu_d)$.
\end{lem}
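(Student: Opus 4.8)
The plan is to establish the two endpoint bounds $R_1:L^1(\mu_d)\to L^1(\mu_d)$ and $R_1:L^1(\mu_d)\to L^\infty(\mu_d)$, and then obtain the full range $1\le p\le q\le\infty$ by interpolation together with the trivial observation that any positive operator bounded $L^1\to L^q$ with $q$ between the endpoints follows by the Riesz--Thorin / Marcinkiewicz machinery; more precisely, once we know $L^1\to L^1$ and $L^1\to L^\infty$ we interpolate to get $L^1\to L^q$ for all $q\in[1,\infty]$, and then the nesting of Lebesgue spaces on the finite-measure set of relevant $r$ (the factor $\chi_{[2,\infty)}(r)$ forces $r\sim|s|$ up to bounded ratios, so on the support everything is morally local) upgrades $L^1\to L^q$ to $L^p\to L^q$ for $p\le q$. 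Actually the cleanest route avoids even this last remark: show directly that $R_1$ is bounded $L^p(\mu_d)\to L^p(\mu_d)$ for every $p$ (it is dominated by a Hardy--Littlewood type average, hence of strong type $(p,p)$ for $p>1$ and weak type $(1,1)$), and bounded $L^1(\mu_d)\to L^\infty(\mu_d)$, then interpolate the pair $(p_0,p_0)$ and $(1,\infty)$.

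The key computation is the $L^1\to L^\infty$ bound. For $r\ge 2$ and $t\in[1,2]$ with $t\le r/2$, the integration variable ranges over $s\in[r-t,r+t]$, so $s\sim r$ with comparability constants independent of everything (indeed $r/2\le r-t\le s\le r+t\le 3r/2$). Hence
\[
R_1f_0(r)\lesssim \sup_{t}\int_{r-t}^{r+t}|f_0(s)|\,ds
\lesssim r^{-(d-1)}\int_{r-2}^{r+2}|f_0(s)|\,s^{d-1}\,ds
\lesssim r^{-(d-1)}\|f_0\|_{L^1(\mu_d)},
\]
and since $r\ge 2$ this gives $\|R_1f_0\|_{L^\infty(\mu_d)}\lesssim\|f_0\|_{L^1(\mu_d)}$, i.e. the $(1,\infty)$ endpoint.

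For the diagonal bounds, note that on $\{r\ge 2,\ t\le r/2\}$ the average $\frac{1}{2t}\int_{r-t}^{r+t}|f_0(s)|\,ds$ is comparable (because $s\sim r$ there) to $\frac{1}{r^{d-1}}\,\frac{1}{2t}\int_{r-t}^{r+t}|f_0(s)|s^{d-1}\,ds$, and $t\in[1,2]$ so $\frac1{2t}\sim 1$; thus $R_1f_0(r)\lesssim M_{\mathrm{HL}}\!\big(|f_0|(\cdot)^{d-1}\big)(r)\cdot r^{-(d-1)}$ where $M_{\mathrm{HL}}$ is the one-dimensional Hardy--Littlewood maximal operator. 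Therefore $\|R_1f_0\|_{L^p(\mu_d)}^p=\int|R_1f_0(r)|^p r^{d-1}\,dr\lesssim \int \big(M_{\mathrm{HL}}(|f_0|(\cdot)^{d-1})(r)\big)^p r^{-(d-1)(p-1)}\,dr$. This is an $L^p$ bound for $M_{\mathrm{HL}}$ against the power weight $r^{-(d-1)(p-1)}$, which lies in the Muckenhoupt class $A_p$ on $(0,\infty)$ precisely when $-(d-1)(p-1)\in(-1,p-1)$, i.e. when $p>1$ (and $d\ge1$ ensures the lower bound). Running this through the weighted maximal theorem gives $\lesssim\int|f_0(r)|^p r^{(d-1)p}\,r^{-(d-1)(p-1)}\,dr=\int|f_0(r)|^p r^{d-1}\,dr=\|f_0\|_{L^p(\mu_d)}^p$, so $R_1$ is of strong type $(p,p)$ for $1<p\le\infty$ and (by the weak-type endpoint of the weighted theory, or by a direct maximal-function argument) of weak type $(1,1)$. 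Interpolating the strong type $(p_0,p_0)$ for some fixed $p_0\in(1,\infty)$ with the $(1,\infty)$ bound from the previous paragraph yields boundedness $L^p(\mu_d)\to L^q(\mu_d)$ along the segment joining $(1/p_0,1/p_0)$ to $(1,0)$; combining with the trivial $(\infty,\infty)$ bound and the nesting $L^p\hookrightarrow L^1$ is not available on $(0,\infty)$, so to cover the whole region $\{p\le q\}$ one interpolates the three points $(1,1)$ (weak), $(1,\infty)$, $(\infty,\infty)$, whose convex hull in the $(1/p,1/q)$-square is exactly $\{1/q\le 1/p\le 1\}$, i.e. $1\le p\le q\le\infty$. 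The main obstacle is simply checking the $A_p$ condition for the power weight and invoking the weighted maximal inequality cleanly (equivalently, one may bypass weights entirely and prove the diagonal $(p,p)$ bound by hand using the change of variables $s=r\pm u$, $0\le u\le t$, which turns $R_1$ into a one-dimensional averaging operator that is manifestly $L^p$-bounded); everything else is an elementary size estimate exploiting $s\sim r$ on the support.
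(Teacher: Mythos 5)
Your $L^1(\mu_d)\to L^\infty(\mu_d)$ estimate is exactly the paper's, and the overall scheme (endpoints plus interpolation) is the right one, but your route to the diagonal bounds has a genuine flaw. You dominate $R_1f_0(r)$ by $r^{-(d-1)}M_{\mathrm{HL}}\big(|f_0|(\cdot)^{d-1}\big)(r)$ and invoke the weighted maximal theorem with weight $w(r)=r^{-(d-1)(p-1)}$, claiming $w\in A_p$ whenever $p>1$. That is false for $d\ge 2$: a power weight $r^{\alpha}$ is in $A_p$ only for $-1<\alpha<p-1$, so here you need $(d-1)(p-1)<1$, i.e. $p<\tfrac{d}{d-1}$. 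Worse, the inequality you actually need, $\int_2^\infty (M_{\mathrm{HL}}g)^p\,r^{-(d-1)(p-1)}dr\lesssim \int g^p\,r^{-(d-1)(p-1)}dr$, is genuinely false for $d\ge3$ and $p>\tfrac{d-1}{d-2}$ (test $g=\chi_{[R,R+1]}$: the left side is $\gtrsim R^{-p}$ while the right side is $\sim R^{-(d-1)(p-1)}$). The point is that passing to $M_{\mathrm{HL}}$ discards the essential feature of $R_1$, namely that it only averages over $|s-r|\le 2$ at unit scale; the weighted-theory detour cannot be repaired, and you must use this localization directly (your parenthetical ``by hand'' remark is the correct fix, but as written it is the fallback, not the proof).

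A second gap: the lemma includes $p=q=1$, i.e. strong type $(1,1)$, but your scheme only produces weak type $(1,1)$ at that corner, and interpolating the triple (weak $(1,1)$, strong $(1,\infty)$, strong $(\infty,\infty)$) recovers every point of the region except that corner. The paper's proof gets strong $(1,1)$ in two lines: since $s\sim r$ and $|s-r|\le 2$ on the support, $R_1f_0(r)\lesssim \chi_{[2,\infty)}(r)\,r^{1-d}\int_{r-2}^{r+2}|f_0(s)|s^{d-1}ds$, and Fubini gives $\int R_1f_0(r)\,r^{d-1}dr\lesssim\|f_0\|_{L^1(\mu_d)}$. With strong $(1,1)$, strong $(1,\infty)$ (which you have) and the trivial $(\infty,\infty)$ bound, interpolation covers all $1\le p\le q\le\infty$. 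So the statement is fine and your endpoint $(1,\infty)$ computation matches the paper, but the diagonal part of your argument needs to be replaced by the unit-scale Fubini/H\"older estimate rather than the weighted maximal theorem.
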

\begin{proof}
	Since $ R_2 $ is bounded on $ L^\infty(\mu_d) $, it suffices to prove that  $ R_1 $ is $ L^1(\mu_d)\to L^1(\mu_d) $ and $ L^1(\mu_d)\to L^{\infty}(\mu_d) $ bounded. Then this lemma is an easy consequence of real interpolation theorem. Note that 
	\begin{align*}
	R_1f_0(r)\lesssim&\chi_{[2,\infty)}(r)r^{1-d}\sup_{\substack{t \in [1,2] }}\int_{r-t}^{r+t}|f_0(s)|s^{d-1}ds\\
	         \lesssim& \chi_{[2,\infty)}(r)r^{1-d}\int_{r-2}^{r+2}|f_0(s)|s^{d-1}ds
	\end{align*}
Then it is easy to see that $ \|R_1f_0\|_{L^\infty(\mu_d)} \lesssim\|f_0\|_{L^1(\mu_d)}$. Moreover, by Fubini's theorem, we have 
\begin{align*}
\int |R_1f_0(r)|r^{d-1}dr\leq &\int_{2}^{\infty}\int_{r-2}^{r+2}|f_0(s)|s^{d-1}dsdr\\
                          =&\int_{0}^\infty \big(\int_{\max\{2,s-2\}}^{s+2}dr\big)|f_0(s)|s^{d-1}ds	\\
                          \lesssim&\|f_0\|_{L^1(\mu_d)}.
\end{align*}
\end{proof}
\begin{lem}\label{lemR2}
	Suppose that $ d>1 $, $ 1<p\leq \infty $ and $ p\leq q\leq pd $. Then the operator $ R_2 $ is bounded from $ L^p(\mu_d) $ to $ L^q(\mu_d)$. Moreover, $ R_2 $ is of strong type $ (1,1) $  and weak type $ (1,d) $, with respect to the measure $ \mu_d $.
\end{lem}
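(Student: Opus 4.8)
The plan is to exploit the support conditions encoded in $R_2$ to replace the moving interval $[t-r,t+r]$ by a fixed compact interval that is bounded away from $0$ (and from $\infty$), and then to reduce matters to an elementary one–variable computation together with a three–point Marcinkiewicz interpolation. \emph{Step 1 (reduction to a fixed interval).} On the support of $R_2 f_0$ we have $r\in[0,4/3)$ and the supremum runs over $t\in[1,2]$ with $t\ge 3r/2$, hence $r\le 2t/3$ and $[t-r,t+r]\subset[t/3,5t/3]\subset[1/3,10/3]$. Therefore
\[
R_2 f_0(r)\lesssim \chi_{(0,4/3)}(r)\,\frac1r\int_{1/3}^{10/3}|f_0(s)|\,ds .
\]
Since $d>1$, the weight $s^{d-1}$ is comparable to a constant on $[1/3,10/3]$, so Hölder's inequality gives $\int_{1/3}^{10/3}|f_0(s)|\,ds\lesssim \|f_0\|_{L^p(\mu_d)}$ for every $1\le p\le\infty$ (with a constant depending on $p$ and $d$); in particular $R_2 f_0(r)\lesssim \chi_{(0,4/3)}(r)\,r^{-1}\,\|f_0\|_{L^1(\mu_d)}$.

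\emph{Step 2 (the three endpoint estimates).} From the last pointwise bound, $\int_0^{\infty}R_2 f_0(r)\,r^{d-1}\,dr\lesssim \|f_0\|_{L^1(\mu_d)}\int_0^{4/3}r^{d-2}\,dr\lesssim \|f_0\|_{L^1(\mu_d)}$, the integral converging precisely because $d>1$; this is the strong $(1,1)$ bound. For the weak $(1,d)$ bound, for $\lambda>0$ the level set $\{r:R_2 f_0(r)>\lambda\}$ is contained in an interval $(0,\rho)$ with $\rho\lesssim \|f_0\|_{L^1(\mu_d)}/\lambda$, whence $\mu_d(\{R_2 f_0>\lambda\})\lesssim \rho^d\lesssim \bigl(\|f_0\|_{L^1(\mu_d)}/\lambda\bigr)^d$. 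Finally $R_2 f_0(r)\le \frac1r\bigl|[t-r,t+r]\bigr|\,\|f_0\|_\infty=2\|f_0\|_\infty$, so $R_2$ is bounded on $L^\infty(\mu_d)$.

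\emph{Step 3 (interpolation).} The operator $R_2$ is sublinear, and the three estimates of Step 2 sit at the vertices $(1,1)$, $(1,1/d)$ and $(0,0)$ of a triangle in the $(1/p,1/q)$–plane. An application of the Marcinkiewicz interpolation theorem then yields strong-type $(p,q)$ bounds at every point of that triangle other than the weak vertex $(1,1/d)$, which includes all $1<p\le\infty$ with $p\le q\le pd$; this is the assertion.

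I do not expect a genuine obstruction here. The only step that needs a little care is Step 1 — checking that the constraints $r<4/3$, $t\in[1,2]$, $t\ge 3r/2$ really do confine $s$ to a fixed interval separated from $0$, so that the $\mu_d$–weight $s^{d-1}$ contributes only harmless constants — after which the computation duplicates the short argument used for $R_1$, supplemented by a standard three–point interpolation. (One could alternatively obtain the range $q<d$ directly from the pointwise bound of Step 1 together with the fact that $\chi_{(0,4/3)}(r)\,r^{-1}\in L^q(\mu_d)$ iff $q<d$, but interpolation against the $L^\infty$ estimate is in any case required to reach $q\in(d,pd]$.)
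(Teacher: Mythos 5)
Your proposal is correct and follows essentially the same route as the paper: the trivial $L^\infty(\mu_d)\to L^\infty(\mu_d)$ bound, the pointwise estimate $R_2f_0(r)\lesssim r^{-1}\chi_{[0,4/3)}(r)\|f_0\|_{L^1(\mu_d)}$ (using that $t\ge 3r/2$, $t\in[1,2]$ keeps $s$ away from the origin), which yields the strong $(1,1)$ bound via $\int_0^{4/3}r^{d-2}dr<\infty$ (here $d>1$) and the weak $(1,d)$ bound, followed by real/Marcinkiewicz interpolation among the three endpoints.
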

\begin{proof}
	It is trivial that $ R_2 $ is bounded on $ L^\infty(\mu_d) $. Thus, by real interpolation theorem, it suffices to show that $ R_2 $ is $ L^1(\mu_d)\to L^1(\mu_d) $ and $ L^1(\mu_d)\to L^{d,\infty}(\mu_d) $ bounded. Note that 
	\[R_2f_0(r)\leq \frac{1}{r}\|f_o\|_{L^1(\mu_d)}\chi_{[0,4/3)}(r).\]
	Then we easily obtain that $ R_2 $ is bounded from $ L^1(\mu_d)$ to $L^{d,\infty}(\mu_d)  $. Moreover, we have
	\[\int |R_2f_0(r)|r^{d-1}dr\leq \int_{0}^{4/3}r^{d-2}\|f_o\|_{L^1(\mu_d)}dr\lesssim \|f_0\|_{L^1(\mu_d)}.\]
	The last inequality follows from the  assumption $ d> 1 $.
\end{proof}

Now we give a proof for the sufficient part of Theorem \ref{thminterval} in higher dimensions $ d\geq 3 $. Actually, we can obtain some Lorentz estimates for the endpoints $ Q_1(1) $ and $ Q_2(1) $.
\begin{prop}
	Let $ d\geq 3 $ and $ p_d=\frac{d}{d-1} $. Then $ M_{[1,2]} $ is bounded from  $ L_{rad}^{p_d,1}(\R^d) $  to  the Lorentz spaces $L_{rad}^{p_d,1}(\R^d) $ and $L_{rad}^{p_dd,1}(\R^d) $.
\end{prop}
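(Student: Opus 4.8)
The plan is to run everything off the pointwise domination (\ref{pointwise-higher}), taking $p=p_d$ there, and then to bound each of the three resulting pieces; for $E=[1,2]$ the ``maximal'' piece will in fact need no maximal estimate at all. Since $f$ is radial, $A_tf$ is radial by (\ref{radialformula}), hence so is $M_{[1,2]}f$, and therefore $\|M_{[1,2]}f\|_{L^{q,1}(\R^d)}\sim\|(M_{[1,2]}f)_0\|_{L^{q,1}(\mu_d)}$ while $\|f\|_{L^{p_d,1}(\R^d)}\sim\|f_0\|_{L^{p_d,1}(\mu_d)}$. With $p=p_d$ one has $p_d'=d$, the weight exponent $(d-1)/p_d'-1=-1/d$, and $g(s)=f_0(s)\,s^{(d-1)^2/d}$. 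So it suffices to bound $\mathfrak{M}_{[1,2],p_d}g$, $R_1f_0$ and $R_2f_0$ in $L^{q,1}(\mu_d)$ by $C\|f_0\|_{L^{p_d,1}(\mu_d)}$, for $q=p_d$ and for $q=p_dd$.

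First I would clear away $R_1$ and $R_2$ by real interpolation of Lemmas~\ref{lemR1} and~\ref{lemR2}, using that $(L^1,L^\infty)_{\theta,1}=L^{1/(1-\theta),1}$ and $(L^d,L^\infty)_{\theta,1}=(L^{d,\infty},L^\infty)_{\theta,1}=L^{d/(1-\theta),1}$. For $R_1$: interpolating $L^1(\mu_d)\to L^1(\mu_d)$ with $L^\infty(\mu_d)\to L^\infty(\mu_d)$ gives $R_1:L^{p_d,1}(\mu_d)\to L^{p_d,1}(\mu_d)$, and interpolating $L^1(\mu_d)\to L^d(\mu_d)$ with $L^\infty(\mu_d)\to L^\infty(\mu_d)$ at $\theta=1/d$ gives $R_1:L^{p_d,1}(\mu_d)\to L^{p_dd,1}(\mu_d)$ (since $1/(1-\tfrac1d)=p_d$ and $d/(1-\tfrac1d)=p_dd$). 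For $R_2$: interpolating the strong $(1,1)$ bound with the $L^\infty(\mu_d)$ bound gives $R_2:L^{p_d,1}(\mu_d)\to L^{p_d,1}(\mu_d)$, and interpolating the weak $(1,d)$ bound $L^1(\mu_d)\to L^{d,\infty}(\mu_d)$ with the $L^\infty(\mu_d)$ bound at $\theta=1/d$ gives $R_2:L^{p_d,1}(\mu_d)\to L^{p_dd,1}(\mu_d)$. All these choices fall within the hypotheses of the two lemmas.

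Now the main term, where the shape of $E=[1,2]$ enters. Since $s^{-1/d}g(s)=f_0(s)\,s^{d-2}=f_0(s)\,s^{-1}\,s^{d-1}$, and since for $r<4$, $t\le2$ the interval $[|r-t|,r+t]$ is contained in $[0,6]$, for every admissible $t$ one has $\int_{|r-t|}^{r+t}s^{-1/d}|g(s)|\,ds=\int_{|r-t|}^{r+t}s^{-1}|f_0(s)|\,d\mu_d(s)\le\int_0^6 s^{-1}|f_0(s)|\,d\mu_d(s)$, a quantity independent of $r$ and $t$; hence
\[\mathfrak{M}_{[1,2],p_d}g(r)\le\chi_{(2/3,4)}(r)\int_0^6 s^{-1}|f_0(s)|\,d\mu_d(s).\]
By H\"older's inequality for the Lorentz spaces over $(\R^+,\mu_d)$ together with the elementary bound $\big\|s^{-1}\chi_{(0,6)}\big\|_{L^{d,\infty}(\mu_d)}<\infty$ (immediate from $\mu_d(\{s<a\})=a^d/d$), the last integral is $\lesssim\|f_0\|_{L^{p_d,1}(\mu_d)}$. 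Thus $\mathfrak{M}_{[1,2],p_d}g\lesssim\chi_{(2/3,4)}\,\|f_0\|_{L^{p_d,1}(\mu_d)}$, and since $\chi_{(2/3,4)}$ has finite $\mu_d$-measure it lies in every $L^{q,1}(\mu_d)$, so $\|\mathfrak{M}_{[1,2],p_d}g\|_{L^{q,1}(\mu_d)}\lesssim\|f_0\|_{L^{p_d,1}(\mu_d)}$ for $q=p_d,p_dd$. Combining the three estimates proves the Proposition, and interpolation with the trivial point $O$ then yields the ``sufficiency'' half of Theorem~\ref{thminterval}.

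The step I expect to be the crux — rather than a long calculation, a single structural observation — is that the critical weight $s^{-1}$ lies in $L^{p_d',\infty}(\mu_d)$ but not in $L^{p_d'}(\mu_d)$. This is precisely what makes the Lorentz endpoint close, and simultaneously explains why only the restricted strong type $L^{p_d,1}\to L^{q,1}$ (not the strong type $L^{p_d}\to L^q$) can hold, i.e.\ why the vertical segment $[Q_1(1),Q_2(1)]$ must be removed in Theorem~\ref{thminterval}. Before noticing the ``discard the supremum'' simplification one might anticipate needing the full annular decomposition of \cite{1997SWW} (splitting the $r$-range by distance to $E$ and the integration interval dyadically); for $E=[1,2]$ that is unnecessary, though it is the robust fallback for the general sets treated later.
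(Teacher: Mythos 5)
Your proposal is correct and follows essentially the same route as the paper: interpolate the $L^1\to L^1$, $L^1\to L^{d}$ (resp.\ $L^1\to L^{d,\infty}$) and $L^\infty\to L^\infty$ bounds of Lemmas~\ref{lemR1} and~\ref{lemR2} to handle $R_1,R_2$ at the Lorentz endpoints, and for the main term drop the supremum over $t\in[1,2]$ and apply the generalized H\"older inequality with the weight $s^{-1}\chi_{(0,6)}\in L^{d,\infty}(\mu_d)$, exactly as in the paper's proof. The only cosmetic difference is that you spell out the interpolation exponents and the radial-norm identification explicitly, which the paper leaves implicit.
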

\begin{proof}
	By real interpolation theorem, Lemma \ref{lemR1} and Lemma \ref{lemR2}, both of the operators $ R_1 $ and  $ R_2 $ map $ L^{p_d,1}(\mu_d) $ boundedly to  $L^{p_d,1}(\mu_d) $ and $L^{p_dd,1}(\mu_d) $. We claim that, for any $ p>0 $ and $ s>0 $, 
	\[\|\mathfrak{M}_{[1,2],p_d}g\|_{L^{p,s}(\mu_d)}\lesssim\|f_0\|_{L^{p_d,1}(\mu_d)}.\]
	  Indeed, by generalized H\"{o}lder inequality,  we have
	 \begin{align*}
	 	\mathfrak{M}_{[1,2],p_d}g(r)\leq & \chi_{[2/3,4]}(r)\int_{0}^6 |f_0(s)|\frac{1}{s}d\mu_d(s)\\
	 	                         \leq &\chi_{[2/3,4]}(r)\||\cdot|^{-1}\|_{L^{d,\infty}(\mu_d)} \|f_0\|_{L^{p_d,1}(\mu_d)}\\
	 	                         \lesssim&\chi_{[2/3,4]}(r)\|f_0\|_{L^{p_d,1}(\mu_d)}.
	 \end{align*}
	 Then we finish  the proof in view of (\ref{pointwise-higher}).
\end{proof}
\begin{rem}
	 When $ p_d<q<p_dd $, $ M_{[1,2]} $ is bounded from  $ L_{rad}^{p_d,1}(\R^d) $  to $ L^{q,s}(\R^d) $  for any $ s>0 $ by interpolation between $ Q_1(1) $ and $ Q_2(1) $, and  radial strong type estimates for $ M_{[1,2]} $ follow from interpolation with the trivial $ L^\infty\to L^\infty $ estimate.
\end{rem}

Here we consider the necessary part of Theorem \ref{thminterval} in all dimensions $ d\geq 2 $.  By standard radial counterexamples (see Section \ref{stand1}-\ref{stand3}), it is easy to see that $  \mathcal{T}_{[1,2]}^{rad}\subset\De(1) $. So we only need to exclude the line segment $ [Q_1(1), Q_2(1)] $. Consider  Stein's counterexample \cite{1976Stein}:
 \[F(x)=|x|^{-(d-1)}\log^{-1}\big(\frac{1}{|x|}\big)\chi_{[0,1/2]}(|x|).\]
 Note that $ M_{[1,2]}F(x)=\infty $ if $ |x|\in [1,2] $. However, by simple calculation, we know that $ F\in L_{rad}^{p_d,r}(\R^d) $ for any $ r>1 $. Thus, $ (1/p,1/q)\in \mathcal{T}_{[1,2]}^{rad} $ only if $ p>p_d $. Furthermore, if $ M_{(0,\infty)} $ is $  L_{rad}^{p_d,1} \to L^{p_d,s}$ bounded, then $ s=\infty $. To see this, let $ f $ be the characteristic function of the unit ball in $ \R^d $, centered at the origin. Then $  M_{(0,\infty)}f(x)\geq A_{|x|}f(x)\gtrsim \frac{1}{|x|^{d-1}}$ when $ |x|\gg1 $. Thus, the radial restricted weak type $ (p_d,p_d) $ estimate for $ M_{(0,\infty)} $, which was proven by Leckband \cite{1987Leckband}, is the best possible Lorentz estimate.
\subsection{The case when $ \b<1$}
Before turning to the positive result of (i) in Theorem \ref{thmhigherdim}, we first introduce some useful notations. For $ n\geq 0 $, we define
\[W_n(E)=\{r\geq 0\,:\, \text{dist}(r, E)\leq 2^{-n+1}\}\]
and
\[D_n(E)=\{r\geq 0\,:\, 2^{-n}<\text{dist}(r, E)\leq 2^{-n+1}\}.\]
For convenience, we usually use the shorthand notations $ W_n $ and $ D_n $ where no ambiguity arises.
We denote the minimal number of binary  intervals of length $ 2^j $ (an interval of the form $ [m2^j, (m+1)2^j] $ for some $ m,k\in \mathbb{Z} $) needed to cover $ E $ by $ \tilde{N}(E, 2^j) $. We recall two useful lemmas in \cite[Lemma 2.2-2.3]{1997SWW}.
\begin{lem}\label{lem-binary}
	Let $ E\subset [1,2] $. Then, for $ n\geq 0 $,
	\[N(E,2^{-n})\leq \tilde{N}(E, 2^{-n})\leq 3N(E,2^{-n} )\]
	and
	\[2^{-n-2}N(E,2^{-n})\leq |W_n|\leq 2^{-n+3}N(E,2^{-n}).\]
\end{lem}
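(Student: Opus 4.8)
\textbf{Proof plan for Lemma \ref{lem-binary}.}

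The plan is to prove the two chains of inequalities separately, both by elementary covering arguments. For the first chain, the left inequality $N(E,2^{-n})\le \tilde N(E,2^{-n})$ is immediate, since any cover of $E$ by binary intervals of length $2^{-n}$ is in particular a cover by intervals of length $2^{-n}$, and $N$ is the minimal such count. For the right inequality, I would take an optimal cover of $E$ by $N(E,2^{-n})$ arbitrary intervals of length $2^{-n}$, and observe that each such interval $J$ meets at most three binary intervals of length $2^{-n}$ (indeed exactly two, but three is a safe bound; any interval of length $\le \ell$ is contained in the union of at most two, hence at most three, consecutive binary intervals of length $\ell$). Replacing each $J$ by the at most three binary intervals it meets yields a binary cover of $E$ of cardinality at most $3N(E,2^{-n})$, which gives $\tilde N(E,2^{-n})\le 3N(E,2^{-n})$.

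For the second chain, recall $W_n=W_n(E)=\{r\ge 0:\operatorname{dist}(r,E)\le 2^{-n+1}\}$, so $W_n$ is the $2^{-n+1}$-neighborhood of $E$. For the upper bound on $|W_n|$, I would fix an optimal cover of $E$ by $N(E,2^{-n})$ intervals of length $2^{-n}$; then $W_n$ is contained in the union of the $2^{-n+1}$-neighborhoods of these intervals, each of which has length $2^{-n}+2\cdot 2^{-n+1}=5\cdot 2^{-n}\le 2^{-n+3}/N(E,2^{-n})\cdot N(E,2^{-n})$ — more precisely, summing gives $|W_n|\le 5\cdot 2^{-n}N(E,2^{-n})\le 2^{-n+3}N(E,2^{-n})$. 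For the lower bound, I would use that a minimal cover of $E$ by intervals of length $2^{-n}$ cannot be too economical: if we had fewer than, say, $2^{n+2}|W_n|$-ish intervals there would be a contradiction. Concretely, cover $W_n$ (an open set, a finite union of intervals) by disjoint maximal intervals; since $E\subset W_n$ and each point of $E$ is within $2^{-n+1}$ of other points forced to lie in $W_n$, one shows $E$ can be covered by at most $2^{n+2}|W_n|$ intervals of length $2^{-n}$ (partition the components of $W_n$ into consecutive intervals of length $2^{-n}$), giving $N(E,2^{-n})\le 2^{n+2}|W_n|$, i.e. $2^{-n-2}N(E,2^{-n})\le |W_n|$.

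The only mildly delicate point is getting the constants $2^{-n-2}$ and $2^{-n+3}$ exactly rather than with worse absolute constants, but since the statement is quoted from \cite[Lemma 2.2--2.3]{1997SWW}, I would simply reproduce their bookkeeping: the $2^{-n+1}$ (rather than $2^{-n}$) radius in the definition of $W_n$ is precisely what makes the neighborhood of an optimal $2^{-n}$-cover have total length at most $5\cdot 2^{-n}N(E,2^{-n})\le 2^{3}\cdot 2^{-n}N(E,2^{-n})$ and makes $W_n$ thick enough around $E$ that covering it by length-$2^{-n}$ binary intervals uses at least $\tfrac14 \cdot 2^{n}|W_n|$ of them. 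I do not expect any real obstacle here; this is a standard fattening/covering estimate and the argument is entirely combinatorial-geometric on the line. I would present it compactly, citing \cite{1997SWW} for the precise constants if the verification becomes tedious.
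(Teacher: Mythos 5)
The paper does not actually prove this lemma: it is recalled verbatim from \cite[Lemmas 2.2--2.3]{1997SWW}, so there is no in-paper argument to compare against. Your elementary covering argument is the standard one and is essentially correct: the first chain is exactly as you say (an interval of length $2^{-n}$ lies in at most two, and meets at most three, binary intervals of that length), and the upper bound $|W_n|\le 5\cdot 2^{-n}N(E,2^{-n})\le 2^{-n+3}N(E,2^{-n})$ follows from fattening an optimal cover by $2^{-n+1}$ on each side. Two small points deserve attention in your treatment of the lower bound. First, the sentence claiming that covering $W_n$ by length-$2^{-n}$ binary intervals ``uses at least $\tfrac14\cdot 2^n|W_n|$ of them'' has the inequality pointing the wrong way for your purpose: what you need is an \emph{upper} bound, namely that $W_n$ (hence $E$) can be covered by at most about $\tfrac32\cdot 2^n|W_n|\le 2^{n+2}|W_n|$ such intervals. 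Second, that upper bound requires the fact you only gesture at: every component of $W_n$ contains, around some point of $\overline E$, the truncated ball of radius $2^{-n+1}$, hence has length at least $2^{-n+1}$; this controls the number of components (at most $2^{n-1}|W_n|$) and absorbs the $+1$ from the ceilings when you tile each component by length-$2^{-n}$ intervals. With that fact supplied, your route gives $N(E,2^{-n})\le \tfrac32\,2^n|W_n|\le 2^{n+2}|W_n|$ as desired. A slightly quicker lower bound, worth knowing, avoids components altogether: every binary interval of length $2^{-n}$ that meets $E$ is contained in $W_n$, and these intervals have pairwise disjoint interiors, so $|W_n|\ge 2^{-n}\tilde N(E,2^{-n})\ge 2^{-n}N(E,2^{-n})$, which is even stronger than the stated $2^{-n-2}N(E,2^{-n})$.
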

\begin{lem}\label{lem-equiv}
	Let $ E\subset [1,2]$ with $ |\bar{E}|=0$. Suppose that $ 0\leq \beta <1 $. Then the following three conditions are equivalent:
	\begin{align*}
		&\sup_{0<\de<1}\chi^E_{\mathrm{M},\beta}(\delta)<\infty,\\
		&\sup_{n\geq 0}|W_n|2^{n(1-\be)}<\infty,\\
		&\sup_{n\geq 0}|D_n|2^{n(1-\be)}<\infty.
	\end{align*}
\end{lem}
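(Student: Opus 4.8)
The plan is to route everything through the dyadic reformulation
\[
\sup_{n\ge 0}2^{-n\beta}N(E,2^{-n})<\infty
\]
of condition (1) and then appeal to Lemma \ref{lem-binary}. First I would record that $\delta\mapsto N(E,\delta)$ is non-increasing (enlarging a cover by $\delta$-intervals gives a cover by larger intervals). Hence, for $n\ge 0$ and $\delta\in(2^{-n-1},2^{-n}]$, monotonicity of $N$ together with $2^{-\beta}2^{-n\beta}<\delta^\beta\le 2^{-n\beta}$ yields
\[
2^{-\beta}\,2^{-n\beta}N(E,2^{-n})\le \delta^\beta N(E,\delta)\le 2^{\beta}\,2^{-(n+1)\beta}N(E,2^{-n-1}),
\]
and letting $n$ run over $\{0,1,2,\dots\}$ (covering $\delta\in(0,1)$, with $n=0$ handling $\delta$ near $1$) shows that (1) is equivalent to $\sup_{n\ge0}2^{-n\beta}N(E,2^{-n})<\infty$. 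By the second estimate of Lemma \ref{lem-binary}, $|W_n|\sim 2^{-n}N(E,2^{-n})$, so $|W_n|\,2^{n(1-\beta)}\sim 2^{-n\beta}N(E,2^{-n})$; taking the supremum over $n\ge0$ gives the equivalence of (1) and (2).

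For the equivalence of (2) and (3), the implication (2)$\Rightarrow$(3) is immediate from $D_n\subset W_n$, which gives $|D_n|\,2^{n(1-\beta)}\le|W_n|\,2^{n(1-\beta)}$. For the converse I would exploit the telescoping structure: since $W_{n+1}=\{r:\mathrm{dist}(r,E)\le 2^{-n}\}$, one has $W_n\setminus W_{n+1}=\{r:2^{-n}<\mathrm{dist}(r,E)\le 2^{-n+1}\}=D_n$, while $\bigcap_{n\ge0}W_n=\bar E$. As each $W_n$ has finite measure (being contained in $[-1,4]$) and $|\bar E|=0$ by hypothesis, this gives the exact identity $|W_n|=\sum_{m\ge n}|D_m|$. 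Assuming (3), say $|D_m|\le C\,2^{-m(1-\beta)}$, the hypothesis $\beta<1$ makes the geometric series converge and
\[
|W_n|=\sum_{m\ge n}|D_m|\le C\sum_{m\ge n}2^{-m(1-\beta)}=\frac{C}{1-2^{-(1-\beta)}}\,2^{-n(1-\beta)},
\]
which is (2).

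I do not expect a genuine obstacle here; the argument is bookkeeping, and it is essentially that of \cite{1997SWW}. It is worth flagging precisely where the two standing hypotheses enter: $|\bar E|=0$ is used only to upgrade the obvious inequality $|W_n|\ge\sum_{m\ge n}|D_m|$ to the identity $|W_n|=\sum_{m\ge n}|D_m|$ (the inequality alone already suffices for (3)$\Rightarrow$(2), but the identity is cleaner), and $\beta<1$ is used only to sum the geometric series in that same step. Everything else is monotonicity of the covering function plus Lemma \ref{lem-binary}.
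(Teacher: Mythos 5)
Your argument is correct and is essentially the standard one: the paper does not prove this lemma but imports it from \cite{1997SWW} (Lemmas 2.2--2.3), and your route --- dyadic reformulation of the first condition via monotonicity of $N(E,\cdot)$, Lemma \ref{lem-binary} for the equivalence with the $W_n$ condition, $D_n\subset W_n$ for one direction, and the decomposition $|W_n|=|\bar E|+\sum_{m\ge n}|D_m|$ plus a geometric series (using $\beta<1$) for the other --- is exactly that bookkeeping. One correction to your closing aside: the inequality $|W_n|\ge\sum_{m\ge n}|D_m|$ does \emph{not} suffice for deducing the $W_n$ bound from the $D_n$ bound, since that direction requires the upper bound $|W_n|\le|\bar E|+\sum_{m\ge n}|D_m|$, so the hypothesis $|\bar E|=0$ is genuinely indispensable there: for $E=[1,2]$ one has $|D_n|=2^{-n+1}$, so the third condition holds, while $|W_n|\ge 1$ makes the second condition fail for every $\beta<1$.
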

To prove the sufficient part of (i) in Theorem \ref{thmhigherdim}, it suffices to show the following proposition, according to Lemma \ref{lemR1}, Lemma \ref{lemR2} and (\ref{pointwise-higher}).
\begin{prop}\label{prop-beta}
	Let $ E\subset [1,2]$ and $ d\geq 2 $. Suppose that $ 0\leq \be<1 $ and 
	\begin{equation*}
	\sup_{0<\de<1}\chi^E_{\mathrm{M},\beta}(\delta)<\infty.
	\end{equation*}
	Then $ \mathfrak{M}_{E,p_0} $ is bounded from $ L^{p_0}(\R^+)$ to $L^{p_0}(\mu_d)$  and $ \mathfrak{M}_{E,p_1} $ is bounded from $ L^{p_1}(\R^+)$ to $L^{p_1d}(\mu_d)$, where $ p_0=1+\tfrac{\be}{d-1} $ and $ p_1=1+\tfrac{p_0}{d}$.
\end{prop}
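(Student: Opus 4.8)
The plan is to prove the two claimed bounds for $\mathfrak{M}_{E,p_0}$ and $\mathfrak{M}_{E,p_1}$ separately, in each case reducing the supremum over $t\in E$ to a controllable sum over a discrete covering of $E$, and then interpolating. Recall from \eqref{mainpart} that $\mathfrak{M}_{E,p}g(r)=\chi_{(2/3,4)}(r)\sup_{t\in E,\ r/2<t<3r/2}\int_{|r-t|}^{r+t}s^{(d-1)/p'-1}|g(s)|\,ds$, so the only genuine difficulty is the behavior of the left endpoint $|r-t|$, which can be small when $r\sim t$. Following the overview, I would decompose the range of $r$ according to its distance from $E$: write $\R^+=\bigcup_{n\ge 0}D_n(E)\cup(\R^+\setminus W_0(E))$ (the last piece contributing nothing once $r$ is at distance $\gtrsim 1$ from $E$, since then $|r-t|\gtrsim 1$ and the integral is trivially controlled). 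On $D_n$ we have $|r-t|$ can be as small as $0$ but $\mathrm{dist}(r,E)\sim 2^{-n}$; I would further split the $s$-integral dyadically as $\int_{|r-t|}^{r+t}=\sum_{m}\int_{s\sim 2^{-m}}$ over the scales $2^{-m}\gtrsim |r-t|$, i.e. $m\lesssim n$ roughly, plus the bulk part $s\sim 1$.

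The key mechanism is a Fubini/multiplicity count. For the $L^{p_0}\to L^{p_0}(\mu_d)$ bound I would aim to prove the two endpoint estimates $\mathfrak{M}_{E,1}:L^1\to L^1(\mu_d)$ with operator norm controlled by $\sup_\delta \delta^{-1}\cdot(\text{something}) $ — more precisely, after the dyadic-in-$s$ and dyadic-in-distance decomposition, the $L^1\to L^1$ norm of the piece at distance scale $2^{-n}$ and integration scale $2^{-m}$ picks up a factor like $2^{-n}N(E,2^{-n})$ (the measure of $W_n$, by Lemma~\ref{lem-binary}) against a gain of $2^{-m(d-1)/p'}$ type from the weight; summing over $n,m$ using $\sup_n |W_n|2^{n(1-\beta)}<\infty$ from Lemma~\ref{lem-equiv} is what forces the exponent $p_0=1+\beta/(d-1)$. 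Simultaneously the trivial $L^\infty\to L^\infty$ bound (the integral over $s\in(0,6)$ of $s^{(d-1)/p'-1}$ converges) gives the other endpoint, and real interpolation yields $L^{p_0}\to L^{p_0}$. For the $L^{p_1}\to L^{p_1 d}(\mu_d)$ statement the same scheme applies but one interpolates instead against an $L^1\to L^{d,\infty}(\mu_d)$-type bound coming from the factor $\chi_{(2/3,4)}(r)$ combined with the worst-case $1/r$ growth already exploited in Lemma~\ref{lemR2}; balancing the exponents via $s^{(d-1)/p_1'-1}$ and the scaling $\mu_d$-measure produces exactly $p_1=1+p_0/d$ and target exponent $p_1 d$.

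Concretely the steps, in order: (1) fix $p=p_0$; restrict attention to $r\in(2/3,4)$ and decompose $r$ over $D_n(E)$, discarding $r\notin W_0(E)$; (2) on $D_n$, split $\int_{|r-t|}^{r+t}s^{(d-1)/p'-1}|g|\,ds$ into the "near" part $s\le C 2^{-n}$ and the "bulk" part $s\ge c$, bounding each sup over $t$ by a sum over an optimal $2^{-n}$-cover of $E$; (3) for each fixed cover interval $J$ (with $|J|=2^{-n}$), estimate the resulting averaging operator in $L^1\to L^1(\mu_d)$ by Fubini, noting each $s$ is counted $\lesssim 1$ times across the disjoint-ish pieces, and collect the factor $|W_n|\lesssim 2^{-n}N(E,2^{-n})$; (4) sum over $n\ge 0$ using Lemma~\ref{lem-equiv}; this gives $L^{p_0}\to L^{p_0}(\mu_d)$ after interpolating with $L^\infty\to L^\infty$; (5) repeat (1)–(4) with $p=p_1$, interpolating against the $L^1\to L^{d,\infty}(\mu_d)$ endpoint instead, to get $L^{p_1}\to L^{p_1 d}(\mu_d)$.

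The main obstacle I anticipate is step (3): getting the multiplicity count right so that the dyadic pieces in $s$ and the cover intervals in $E$ really do overlap boundedly, and verifying that the weight power $s^{(d-1)/p'-1}$ combines with the measure $d\mu_d=r^{d-1}dr$ and the length $2^{-n}$ of the integration window to produce precisely the power of $2^{-n}$ that matches $\sup_n|W_n|2^{n(1-\beta)}<\infty$ — i.e. pinning down why $p_0=1+\beta/(d-1)$ and not some nearby exponent. This is essentially the bookkeeping in \cite[Proposition 5.4]{1997SWW} specialized to $E\subset[1,2]$ and a single dilation block, so I expect it to go through, but it is where all the arithmetic of the exponents has to be checked carefully; everything else (the $L^\infty$ and weak-$(1,d)$ endpoints, the interpolation, discarding $r$ far from $E$) is routine.
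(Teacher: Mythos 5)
Your opening moves coincide with the paper's: restrict to $r\in(2/3,4)$, stratify by $\mathrm{dist}(r,E)$ via the sets $D_n$, note that on $D_n$ one has $|r-t|\ge 2^{-n}$, decompose the $s$-integral dyadically, and feed in $|D_n|\lesssim 2^{-n(1-\be)}$ through Lemmas \ref{lem-binary} and \ref{lem-equiv}. The gap is in your summation mechanism. You propose to bound each piece (distance scale $2^{-n}$, integration scale $2^{-m}$) in $L^1\to L^1(\mu_d)$ and $L^\infty\to L^\infty$, interpolate, and sum the operator norms. The piece with $r\in D_n$, $s\in I_{-m}$ has $L^\infty\to L^\infty$ norm $\lesssim 2^{-m\frac{d-1}{p_0'}}$ and $L^1\to L^1(\mu_d)$ norm $\lesssim |D_n|\,2^{-m(\frac{d-1}{p_0'}-1)}$, so interpolation gives an $L^{p_0}\to L^{p_0}$ norm $\lesssim |D_n|^{1/p_0}2^{-m(\frac{d-1}{p_0'}-\frac{1}{p_0})}$. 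Since $\frac{d-1}{p_0'}-\frac{1}{p_0}=-\frac{(d-1)(1-\be)}{d-1+\be}<0$, summing over $0\le m\le n$ yields $\sim 2^{n\frac{(d-1)(1-\be)}{d-1+\be}}$, which exactly cancels $|D_n|^{1/p_0}\lesssim 2^{-n\frac{(1-\be)(d-1)}{d-1+\be}}$: each $n$ contributes $O(1)$ and the sum over $n$ diverges. Thus piecewise interpolation plus the triangle inequality only gives boundedness for $p>p_0$, missing precisely the endpoint exponents $p_0,p_1$ that the proposition asserts (and that produce the closed triangle in Theorem \ref{thmhigherdim}(i)). Interpolating globally instead is not an option: for $0<\be<1$ the operator $\mathfrak{M}_{E,p_0}$ is not bounded on $L^1$ at all (test $g=\chi_{[\de,2\de]}$ with $\de$ along a sequence realizing the Minkowski dimension; the ratio of norms grows like $\de^{-\be^2/(d-1+\be)}$ up to $\de^{\ep}$ losses). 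Likewise, the $L^1\to L^{d,\infty}(\mu_d)$ endpoint you invoke for the $p_1$ bound has no analogue here: the weak $(1,d)$ bound in Lemma \ref{lemR2} comes from the $1/r$ factor near the origin, whereas $\mathfrak{M}_{E,p}$ lives on $r\sim 1$.

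The missing idea — and what the paper actually does — is to avoid interpolation and keep the localization of $g$: on $D_n$, H\"older on each dyadic block gives the pointwise bound $\mathfrak{M}_{E,p_0}g(r)\lesssim \sum_{l=-2}^{n}2^{-l(\frac{d}{p_0'}-1)}\|g\|_{L^{p_0}(I_{-l})}$, and after inserting $|D_n|\lesssim 2^{-n(1-\be)}=2^{np_0(\frac{d}{p_0'}-1)}$ the resulting double sum is a discrete convolution in $n-l$ of the sequence $a_k=\|g\|_{L^{p_0}(I_{-k})}$, whose $l^{p_0}$ norm is $\|g\|_{L^{p_0}(\R^+)}$, with an $l^1$ kernel (here $p_0<\tfrac{d}{d-1}$ is used); Young's inequality for sequences then gives the endpoint. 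The second bound runs identically using $2^{-n(1-\be)}=2^{np_1d(\frac{d}{p_1'}-1)}$ together with the embedding $l^{p_1}\hookrightarrow l^{p_1d}$. Note also that the covering of $E$ and the Fubini/multiplicity count you flag as the main bookkeeping are not needed here: once $r\in D_n$, the constraint $s\ge |r-t|\ge 2^{-n}$ removes all $t$-dependence from the integrand in (\ref{mainpart}), so the supremum over $t$ disappears and $E$ enters only through $|D_n|$. Your covering scheme is the right tool for the two-dimensional remainder operators, cf.\ Proposition \ref{prop2D-R1}, and there it indeed yields only an open range of exponents — which is exactly the loss your proposal would incur here.
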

\begin{proof}
	We first prove the estimate for $ \mathfrak{M}_{E,p_0} $. It is easy to see that
	\begin{align*}
		\|\mathfrak{M}_{E,p_0}g\|_{L^{p_0}(\mu_d)}\sim &\Big(\sum_{n\geq 0}\int_{D_n}|\mathfrak{M}_{E,p_0}g(r)|^{p_0}dr\Big)^{\tfrac{1}{p_0}}\\
		   +&\Big( \int_{r\in [2/3,4 ], \text{dist}(r,E)>2}|\mathfrak{M}_{E,p_0}g(r)|^{p_0}dr\Big)^{\tfrac{1}{p_0}}. 
	\end{align*}
The second term  is  easy to be dominated. Indeed, if $ r\in [2/3,4]$ and $\text{dist}(r,E)>2 $, then, by a simple application of H\"{o}lder inequality, we have
\begin{align*}
	\mathfrak{M}_{E,p_0}g(r)=&\sup_{\substack{t \in E \\ r/2<t<3r/2}}\int_{|r-t|}^{r+t}s^{\tfrac{d-1}{p_0'}-1}|g(s)|ds\\
	                       \lesssim&\int_{2}^{6}|g(s)|ds\lesssim (\int_{2}^{6}|g(s)|^{p_0}ds)^{\tfrac{1}{p_0}}\lesssim\|g\|_{L^{p_0}(\R^+)}.
	 \end{align*}
For the first term, by  dyadic decomposition, H\"{o}lder inequality and change of variables $ l=n-k $, we obtain that,  when $ r\in D_n $, 
\begin{align*}
	                       \mathfrak{M}_{E,p_0}g(r)&\leq \int_{2^{-n}}^{8}s^{\tfrac{d-1}{p_0'}-1}|g(s)|ds\\
	                       &=\sum_{k=0}^{n+2}\int_{2^{k-n}}^{2^{k-n+1}}s^{\tfrac{d-1}{p_0'}-1}|g(s)|ds\\
	                       &\leq \sum_{k=0}^{n+2}\big(\int_{2^{k-n}}^{2^{k-n+1}}s^{d-1-p_0'}ds\big)^{\tfrac{1}{p_0'}}\big(\int_{2^{k-n}}^{2^{k-n+1}}|g(s)|^{p_0}ds\big)^{\tfrac{1}{p_0}}\\
	                       &\lesssim \sum_{l=-2}^{n}2^{-l(\tfrac{d}{p_0'}-1)}\|g\|_{L^{p_0}(I_{-l})}.
\end{align*}
According to Lemma \ref{lem-equiv}, $|D_n|\lesssim 2^{-n(1-\be)} =2^{np_0\big(\tfrac{d}{p_0'}-1\big)}$. Consequently, 
	                       \begin{align*}
	                       \|\mathfrak{M}_{E,p_0}g\|_{L^{p_0}(\cup D_n;\mu_d)}&\lesssim\big(\sum_{n\geq 0}|D_n|\big( \sum_{l=-2}^{n}2^{-l(\tfrac{d}{p_0'}-1)}\|g\|_{L^{p_0}(I_{-l})}\big)^{p_0}\big)^{\tfrac{1}{p_0}}\\
	                       &\lesssim \big(\sum_{n\geq 0}\big( \sum_{l=-2}^{n}2^{(n-l)(\tfrac{d}{p_0'}-1)}\|g\|_{L^{p_0}(I_{-l})}\big)^{p_0}\big)^{\tfrac{1}{p_0}}.
	                       \end{align*}
We take the sequence $ a_k=\|g\|_{L^{p_0}(I_{-k})} $ for $ k\geq -2 $ and  $ a_k=0 $ for $ k\leq -2 $ . Another sequence $ b_k $ is defined by $ b_k=2^{k(\tfrac{d}{p_0'}-1)} $ for $ k\geq 0 $ and otherwise $ b_k=0 $. Note that $ \{b_k\}\in l^1 $ since $ p_0<\tfrac{d}{d-1} $. Using  Young's inequality for convolution of sequences, we have
\[\|\mathfrak{M}_{E,p_0}g\|_{L^{p_0}(\cup D_n;\mu_d)}\lesssim \big\|\sum_{l}a_{n-l}b_l\big\|_{l_n^{p_0}}\lesssim \|a_k\|_{l_k^{p_0}}\|b_k\|_{l_k^1}\lesssim \|g\|_{L^{p_0}(\R^+)}.\]
Next we  prove the estimate for $ \mathfrak{M}_{E,p_1} $. Similarly, it is enough to consider the $ L^{p_1d}(\mu_d) $ norm of $ \mathfrak{M}_{E,p_1}g $ on $ \cup D_n $. Proceeding as before, we have 
\[\mathfrak{M}_{E,p_1}g(r)\lesssim \sum_{l=-2}^{n}2^{-l(\tfrac{d}{p_1'}-1)}\|g\|_{L^{p_1}(I_{-l})},\]
when $ r\in D_n $.  Note that $|D_n|\lesssim 2^{-n(1-\be)} =2^{np_1d\big(\tfrac{d}{p_1'}-1\big)} $. Thus, we obtain that 
	                      \begin{align*}
	                       \|\mathfrak{M}_{E,p_1}g\|_{L^{p_1d}(\cup D_n;\mu_d)}&\lesssim\big(\sum_{n\geq 0}|D_n|\big( \sum_{l=-2}^{n}2^{-l(\tfrac{d}{p_1'}-1)}\|g\|_{L^{p_1}(I_{-l})}\big)^{p_1d}\big)^{\tfrac{1}{p_1d}}\\
	                       &\lesssim \big(\sum_{n\geq 0}\big( \sum_{l=-2}^{n}2^{(n-l)(\tfrac{d}{p_1'}-1)}\|g\|_{L^{p_1}(I_{-l})}\big)^{p_1d}\big)^{\tfrac{1}{p_1d}}.
	                       \end{align*}
 We take the similar sequence $ c_k=\|g\|_{L^{p_1}(I_{-k})} $ for $ k\geq -2 $ and  $ c_k=0 $ for $ k\leq -2 $. Set $ d_k=2^{k(\tfrac{d}{p_1'}-1)} $ for $ k\geq 0 $ and otherwise $ d_k=0 $. Then $ \{d_k\}\in l^1 $ since $ p_1=1+\tfrac{p_0}{d}<\tfrac{d}{d-1} $ as well. For simplicity, we define $e_n=\sum_{l}c_{n-l}d_l$. Using the natural embedding $ l^{p_1}\hookrightarrow l^{p_1d} $ and again Young's inequality for convolution of sequences, we have
\[\|\mathfrak{M}_{E,p_1}g\|_{L^{p_1d}(\cup D_n;\mu_d)}\lesssim \|e_n\|_{l_n^{p_1d}}\lesssim \|e_n\|_{l_n^{p_1}}\lesssim \|c_k\|_{l_k^{p_1}}\|d_k\|_{l_k^1}\lesssim \|g\|_{L^{p_1}(\R^+)},\]
which completes the proof.
\end{proof}
Using this proposition, we give a proof for the sufficient part of  (ii) in Theorem \ref{thmhigherdim}.
\begin{cor}
	Let $ d\geq 3 $ and $ E\subset [1,2]$  with $ \dim_{\mathrm{M}}\!E=\be <1$. Then   $ \De(\be)\setminus [Q_1(\be), Q_2(\be)]\subset  \mathcal{T}_E^{rad}$
\end{cor}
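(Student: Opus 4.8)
The plan is to deduce the corollary from Proposition~\ref{prop-beta} by absorbing the gap between the hypothesis $\dim_{\mathrm{M}}E=\be$ and the stronger finiteness hypothesis of that proposition into an $\varepsilon$-loss, and then to fill the triangle by interpolation together with a limiting argument.

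First I would note that $\dim_{\mathrm{M}}E=\be<1$ means precisely that for every $\varepsilon\in(0,1-\be)$ one has $N(E,\de)\lesssim_{\varepsilon}\de^{-(\be+\varepsilon)}$, i.e.\ $\sup_{0<\de<1}\chi^E_{\mathrm{M},\be+\varepsilon}(\de)<\infty$. Hence Proposition~\ref{prop-beta} applies with the exponent $\be+\varepsilon$ in place of $\be$ (legitimate since $\be+\varepsilon<1$): writing $p_0^\varepsilon=1+\tfrac{\be+\varepsilon}{d-1}$ and $p_1^\varepsilon=1+\tfrac{p_0^\varepsilon}{d}$, the operator $\mathfrak{M}_{E,p_0^\varepsilon}$ is bounded $L^{p_0^\varepsilon}(\R^+)\to L^{p_0^\varepsilon}(\mu_d)$ and $\mathfrak{M}_{E,p_1^\varepsilon}$ is bounded $L^{p_1^\varepsilon}(\R^+)\to L^{p_1^\varepsilon d}(\mu_d)$. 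Next I would convert these into mapping properties of $M_E$ on radial functions. Using that $\|f\|_{L^p(\R^d)}\sim\|f_0\|_{L^p(\mu_d)}$ for radial $f(x)=f_0(r)$, and that $\|g\|_{L^p(\R^+)}=\|f_0\|_{L^p(\mu_d)}$ for $g(s)=f_0(s)s^{(d-1)/p}$, I feed $p=p_0^\varepsilon$ into the pointwise inequality (\ref{pointwise-higher}) and control $R_1f_0,R_2f_0$ in $L^{p_0^\varepsilon}(\mu_d)$ by Lemma~\ref{lemR1} and Lemma~\ref{lemR2} (applicable since $1<p_0^\varepsilon$ and $p_0^\varepsilon\le p_0^\varepsilon\le p_0^\varepsilon d$), getting boundedness of $M_E$ from $L^{p_0^\varepsilon}_{rad}$ to $L^{p_0^\varepsilon}$; the same with $p=p_1^\varepsilon$ (now $R_1,R_2$ bounded $L^{p_1^\varepsilon}(\mu_d)\to L^{p_1^\varepsilon d}(\mu_d)$, again within the range of Lemmas~\ref{lemR1}--\ref{lemR2}) gives boundedness of $M_E$ from $L^{p_1^\varepsilon}_{rad}$ to $L^{p_1^\varepsilon d}$. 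A short computation with (\ref{tridef}) identifies the corresponding exponent pairs as $Q_1(\be+\varepsilon)=(1/p_0^\varepsilon,1/p_0^\varepsilon)$ and $Q_2(\be+\varepsilon)=(1/p_1^\varepsilon,1/(p_1^\varepsilon d))$.

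Together with the trivial bound $M_E:L^\infty\to L^\infty$ (the vertex $O$), interpolation then gives $\De(\be+\varepsilon)=\mathrm{conv}\{O,Q_1(\be+\varepsilon),Q_2(\be+\varepsilon)\}\subset\mathcal{T}_E^{rad}$ for every $\varepsilon\in(0,1-\be)$; here one uses that the set of exponent pairs for which the sublinear $M_E$ is bounded $L^p_{rad}\to L^q$ is convex, and that for all pairs in play $p\le q$, so the Lorentz-space output $L^{q,p}$ produced by real interpolation embeds into $L^q$. Finally, letting $\varepsilon\searrow 0$: the vertex $Q_1(\be+\varepsilon)$ slides toward $Q_1(\be)$ along the diagonal and $Q_2(\be+\varepsilon)$ slides toward $Q_2(\be)$ along the ray $\{1/q=\tfrac1d(1/p)\}$, so the triangles $\De(\be+\varepsilon)$ are nested in $\De(\be)$ and their union is exactly $\De(\be)\setminus[Q_1(\be),Q_2(\be)]$ — every point of the closed edge $[Q_1(\be),Q_2(\be)]$ stays outside each $\De(\be+\varepsilon)$, while every point on $[O,Q_1(\be))$, on $[O,Q_2(\be))$, or in $\mathrm{int}\,\De(\be)$ lies in $\De(\be+\varepsilon)$ once $\varepsilon$ is small enough. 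This yields the claimed inclusion.

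The point needing the most care is not an analytic obstacle but a matter of principle: the mismatch between "$\dim_{\mathrm{M}}E=\be$" and "$\sup_{0<\de<1}\chi^E_{\mathrm{M},\be}(\de)<\infty$" forces the $\varepsilon$-loss, and this is precisely why the closed edge $[Q_1(\be),Q_2(\be)]$ must be excluded in part~(ii), whereas the stronger conclusion $\mathcal{T}_E^{rad}=\De(\be)$ in part~(i) requires exactly the extra assumption (\ref{minassm}). Everything else — the arithmetic matching $p_0^\varepsilon,p_1^\varepsilon$ to $Q_1,Q_2$, the verification of the hypotheses of Lemmas~\ref{lemR1}--\ref{lemR2}, and the elementary geometry of the nested triangles — is routine.
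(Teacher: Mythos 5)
Your proposal is correct and follows essentially the same route as the paper: the paper's proof of this corollary is exactly the observation that $\dim_{\mathrm{M}}E=\be<1$ gives $\sup_{0<\de<1}\chi^E_{\mathrm{M},\be+\ep}(\de)<\infty$ for every $0<\ep<1-\be$, an application of Proposition~\ref{prop-beta} with $\be+\ep$ (combined, as indicated before that proposition, with the pointwise inequality (\ref{pointwise-higher}) and Lemmas~\ref{lemR1}--\ref{lemR2}), and the identity $\bigcup_{0<\ep<1-\be}\De(\be+\ep)=\De(\be)\setminus[Q_1(\be),Q_2(\be)]$. You merely spell out the steps (transfer to $M_E$ via (\ref{pointwise-higher}), interpolation with the trivial $L^\infty$ bound, and the nested-triangle geometry) that the paper leaves implicit, and these details are all accurate.
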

\begin{proof}
	The assumption $ \dim_{\mathrm{M}}\!E=\be <1$ means that $\sup_{0<\de<1}\chi^E_{\mathrm{M},\beta+\ep}(\delta)<\infty $ for any $ 0<\ep<1-\be $. Applying the above proposition, we have
	\[\bigcup_{\substack{0<\ep<1-\be }}\De(\be +\ep)\subset  \mathcal{T}_E^{rad},\]
	i.e. $ \De(\be)\setminus [Q_1(\be), Q_2(\be)]\subset  \mathcal{T}_E^{rad}$.
\end{proof}
\subsection{The endpoint case when $\b=1$ }
First, we recall the discrete Hardy's inequality. Although this inequality is well known, we give a proof for completeness.
\begin{lem}
	If  $ \{a_n\}_{n=1}^\infty \in l^p$ for some $ 1<p\leq \infty $, then
	\[\Big(\sum_{n\geq 1}\big|\frac{1}{n}\sum_{j=1}^{n}a_j\big|^p\Big)^{\tfrac{1}{p}}\lesssim \|a_n\|_{l^p_n}.\]
\end{lem}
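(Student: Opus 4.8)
The plan is to reduce the discrete Hardy inequality to a known averaging estimate via a standard Schur-test / duality argument. First I would dispose of the case $p=\infty$ trivially: since $\left|\frac1n\sum_{j=1}^n a_j\right|\le \sup_j|a_j|=\|a_n\|_{l^\infty}$, the claim is immediate. So assume $1<p<\infty$ and write $b_n=\frac1n\sum_{j=1}^n a_j$; we may also assume $a_n\ge 0$ by replacing $a_n$ with $|a_n|$, which only increases the left-hand side.

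Next I would view the map $(a_j)\mapsto (b_n)$ as given by the matrix kernel $K(n,j)=\frac1n\mathbf{1}_{\{j\le n\}}$ and apply Schur's test with the power weights $w(m)=m^{-\alpha}$ for a suitable $\alpha\in(0,1)$. The two Schur conditions amount to the elementary estimates $\sum_{j\le n} j^{-\alpha p'/p}\lesssim n^{1-\alpha p'/p}$ (so that, after multiplying by $\frac1n$, one gets $\lesssim n^{-\alpha p'/p}$, matching $w(n)^{p'/p}$ up to the constant, using $\alpha p'/p<1$) and $\sum_{n\ge j}\frac1n\, n^{-\alpha(p-1)}\lesssim j^{-\alpha(p-1)}$ which needs $\alpha(p-1)>0$, i.e. only $\alpha>0$. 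Choosing, say, $\alpha=\frac{1}{2p'}$ makes $\alpha p'/p=\frac1{2p}<1$ and $\alpha>0$, so both conditions hold, and Schur's lemma gives $\|b_n\|_{l^p}\lesssim \|a_j\|_{l^p}$ with an implicit constant depending only on $p$.

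Alternatively — and this is probably the cleanest route to spell out — I would use plain duality: pair $b_n$ against a nonnegative sequence $(c_n)\in l^{p'}$ with $\|c_n\|_{l^{p'}}=1$, interchange the order of summation to get $\sum_j a_j\big(\sum_{n\ge j}\frac{c_n}{n}\big)$, and then it suffices to show the ``dual Hardy'' bound $\big\|\sum_{n\ge j}\frac{c_n}{n}\big\|_{l^{p'}_j}\lesssim \|c_n\|_{l^{p'}}$. Substituting $m=n-j$ is awkward, so instead I would estimate $\sum_{n\ge j}\frac{c_n}{n}\le \sum_{k\ge 0} 2^{-k}\cdot\frac{1}{\max(j,2^{k-1})}\sum_{2^{k-1}\le n<2^k, n\ge j} c_n$ after a dyadic decomposition of the range $n\in[j,\infty)$, bound each dyadic block by H\"older, and sum a geometric-type series in $k$; the exponent bookkeeping works out precisely because $p'>1$. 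The main obstacle, such as it is, is purely the exponent arithmetic in verifying the Schur conditions (equivalently, in summing the dyadic pieces): one must check that the gain $1/n$ in the average is strong enough to beat the $l^{p'}$-summation of the tail, which is exactly where $p>1$ is used and where $p=1$ genuinely fails. No deeper input is needed; the inequality is self-contained and the constant depends only on $p$.
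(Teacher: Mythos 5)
Your proof is correct, but it takes a genuinely different route from the paper. The paper disposes of the lemma by interpolation: the pointwise bound $\tfrac1n\big|\sum_{j\le n}a_j\big|\le \tfrac1n\|a\|_{l^1}$ gives the weak type $(1,1)$ estimate for the averaging operator, the $l^\infty\to l^\infty$ bound is trivial, and Marcinkiewicz interpolation yields the claim for all $1<p\le\infty$ in one stroke. You instead prove the strong bound directly for $1<p<\infty$ by a Schur test with power weights (or, equivalently, by duality together with the dual Hardy bound), handling $p=\infty$ separately; this is self-contained, avoids quoting an interpolation theorem, and isolates exactly where $p>1$ is used. Two cosmetic corrections to your bookkeeping: the two Schur conditions you display (exponents $\alpha p'/p$ and $\alpha(p-1)$) do not come from one and the same test weight in the standard Schur lemma --- with the single weight $h(m)=m^{-\alpha}$ the conditions read $\tfrac1n\sum_{j\le n}j^{-\alpha p'}\lesssim n^{-\alpha p'}$ and $\sum_{n\ge j}n^{-1-\alpha p}\lesssim j^{-\alpha p}$, which hold for any $0<\alpha<1/p'$ (your choice $\alpha=1/(2p')$ is fine), so the argument goes through after this trivial adjustment; similarly, in the duality route the displayed dyadic majorization carries a spurious factor $2^{-k}$, and the condition that matters there is $p'<\infty$ (i.e.\ $p>1$) rather than $p'>1$, with the summation in $k$ requiring the geometric gain from comparing the dyadic block of $j$ to that of $n$ (a Young-type convolution bound) rather than a bare triangle inequality. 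These slips do not affect the validity of the approach: the needed estimates are precisely the elementary sums you identify, and either of your routes proves the lemma, at the cost of a somewhat longer verification than the paper's interpolation argument.
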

\begin{proof}
	This lemma follows from interpolation between the weak type $ (1,1) $ estimate and the trivial  $ l^\infty\to l^\infty $ estimate. Note that $\tfrac{1}{n}|\sum_{j=1}^{n}a_j|\leq \tfrac{1}{n}\|a_j\|_{l_j^1}$, which gives $ l^1\to l^{1,\infty} $ estimate.
\end{proof}
Now  we pay attention to the positive result of (iii) in Theorem \ref{thmhigherdim}. As before, it suffices to prove the following proposition.
\begin{prop}
	Let $ E\subset [1,2] $. Suppose that $ w_n=|W_n| \leq n^{-\tfrac{q}{d}}$ where $  p_d=\tfrac{d}{d-1}\leq q<\infty$. Then
	\[\|\mathfrak{M}_{E,p_d}g\|_{L^{q}(\cup D_n;\mu_d)}\lesssim \|g\|_{L^{p_d}(\R^+)}.\]
\end{prop}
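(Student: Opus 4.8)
The plan is to discretize, reduce the inequality to a one-variable sequence estimate, and then close it by a summation-by-parts argument together with a (borderline) weighted discrete Hardy inequality.

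\emph{Step 1 (reduction to a sequence inequality).} The role of the exponent $p_d=\tfrac{d}{d-1}$ is that $p_d'=d$, so that $\tfrac{d-1}{p_d'}-1=-\tfrac1d$ and $s^{-p_d'/d}=s^{-1}$; hence the H\"older step already carried out in the proof of Proposition~\ref{prop-beta} loses \emph{no} power of $2^{l}$. Concretely, for $r\in D_n$ every admissible $t$ satisfies $|r-t|\ge\operatorname{dist}(r,E)>2^{-n}$ and $r+t\lesssim1$, so, setting $a_l:=\|g\|_{L^{p_d}(I_{-l})}$ (with $a_l:=0$ for small $l$) and $A_n:=\sum_{l\le n}a_l$,
\[\mathfrak M_{E,p_d}g(r)\ \le\ \int_{2^{-n}}^{C}s^{-1/d}|g(s)|\,ds\ \lesssim\ \sum_{l\le n}\|g\|_{L^{p_d}(I_{-l})}=A_n\qquad(r\in D_n).\]
Since $r^{d-1}\sim1$ on $(2/3,4)$ and $\sum_l a_l^{p_d}\le\|g\|_{L^{p_d}(\R^+)}^{p_d}$, it suffices to prove $\sum_{n\ge0}|D_n|\,A_n^{\,q}\lesssim\|a\|_{\ell^{p_d}}^{\,q}$. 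Here I would record that $D_n=W_n\setminus W_{n+1}$, so $|D_n|=w_n-w_{n+1}$ and $w_n\to0$.

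\emph{Step 2 (summation by parts, then H\"older).} This is the crucial move. Using $|D_n|=w_n-w_{n+1}$ and the convexity bound $A_n^q-A_{n-1}^q\le qA_n^{q-1}a_n$ ($q\ge1$), summation by parts on the partial sums gives
\[\sum_{n\ge0}(w_n-w_{n+1})A_n^q\ \le\ w_0A_0^q+q\sum_{n\ge1}w_nA_n^{q-1}a_n\ \lesssim\ \|a\|_{\ell^{p_d}}^q+\sum_{n\ge1}n^{-q/d}A_n^{q-1}a_n,\]
after bounding $w_0\lesssim1$, $A_0\lesssim\|a\|_{\ell^{p_d}}$ and inserting $w_n\le n^{-q/d}$. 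It is essential to reorganize via the differences $w_n-w_{n+1}$ rather than to use the cruder $|D_n|\le n^{-q/d}$, which loses a logarithm (as one sees already for $w_n\sim n^{-q/d}$). Then H\"older with exponents $(p_d,p_d')=(p_d,d)$ on the last sum yields
\[\sum_{n\ge1}n^{-q/d}A_n^{q-1}a_n\ \le\ \|a\|_{\ell^{p_d}}\Big(\sum_{n\ge1}n^{-q}A_n^{(q-1)d}\Big)^{1/d},\]
so everything reduces to $\sum_{n\ge1}n^{-q}A_n^{(q-1)d}\lesssim\|a\|_{\ell^{p_d}}^{(q-1)d}$.

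\emph{Step 3 (a borderline weighted Hardy inequality).} This last inequality is a weighted discrete Hardy inequality for the map $a\mapsto(A_n)$ from $\ell^{p_d}$ into $\ell^{(q-1)d}$ with weight $n^{-q}$. When $q=p_d$ one has $(q-1)d=p_d$ and it is \emph{exactly} the discrete Hardy inequality recalled above. For $q>p_d$ one has $(q-1)d>p_d$, and one may apply the classical criterion for weighted Hardy inequalities in the range $\ell^{p_d}\to\ell^{(q-1)d}$: the balance quantity
\[\sup_{N\ge1}\Big(\sum_{n\ge N}n^{-q}\Big)^{1/((q-1)d)}N^{1/p_d'}\ \sim\ \sup_{N\ge1}N^{(1-q)/((q-1)d)}\,N^{1/d}\ =\ \sup_{N\ge1}N^{0}\ =\ 1\]
is finite (using $(q-1)d\cdot\tfrac1d=q-1$ and $p_d'=d$), which gives the inequality. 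Combining the three steps proves the proposition. The only genuinely non-formal point — and the place where the precise logarithmic rate $q/d$ of the hypothesis is used — is the passage from the $|D_n|$-sum to a weighted Hardy inequality via summation by parts: one must exploit the telescoping identity $|D_n|=w_n-w_{n+1}$ rather than the size of $|D_n|$, and the resulting Hardy inequality holds exactly at the borderline. (Alternatively, one can prove the needed weighted Hardy inequality directly by inserting a weight $(l+1)^\mu$, $\mu>\tfrac1{d-1}$, inside $A_n$ before applying H\"older, but the $\mu$-bookkeeping degenerates precisely at $q=p_d$, which is why that endpoint is handled separately by the bare discrete Hardy lemma.)
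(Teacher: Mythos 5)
Your argument is correct, and its skeleton coincides with the paper's proof: the same reduction to $\sum_n |D_n| A_n^q$ with $A_n$ the partial sums of $a_l=\|g\|_{L^{p_d}(I_{-l})}$ (no loss in the H\"older step because $p_d'=d$), the same crucial Abel summation exploiting $|D_n|=w_n-w_{n+1}$ rather than the lossy bound $|D_n|\lesssim n^{-q/d}$, the same convexity estimate $A_n^q-A_{n-1}^q\lesssim a_nA_n^{q-1}$, and the same H\"older step with exponents $(p_d,d)$ reducing everything to $\sum_n n^{-q}A_n^{(q-1)d}\lesssim\|a\|_{\ell^{p_d}}^{(q-1)d}$. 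The only genuine divergence is in how this last inequality is settled: the paper stays self-contained, factoring $n^{-q}A_n^{(q-1)d}=\bigl(\tfrac{A_n}{n}\bigr)^{p_d}\bigl(\tfrac{A_n^d}{n}\bigr)^{q-p_d}$, bounding the second factor by the crude estimate $A_n\lesssim n^{1/d}\|a\|_{\ell^{p_d}}$ (H\"older), and applying the plain discrete Hardy inequality (proved there by interpolation) to the first; you instead quote the classical two-weight discrete Hardy criterion for $\ell^{p_d}\to\ell^{(q-1)d}$, $(q-1)d\ge p_d$, and verify that the Muckenhoupt-type balance constant equals $1$ at this borderline. Both routes are valid: yours is shorter and makes transparent why the hypothesis $w_n\le n^{-q/d}$ is exactly critical, at the price of importing a standard but nontrivial external theorem, while the paper's factorization trick keeps the proof elementary and needs only the unweighted Hardy lemma it already states.
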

\begin{proof}
	Proceeding as before, we have, if $ r\in D_n $,   
	\[\mathfrak{M}_{E,p_d}g(r)\lesssim \sum_{k=-2}^{n}2^{-l(\tfrac{d}{p_d'}-1)}\|g\|_{L^{p_d}(I_{-k})}=\sum_{k=-2}^{n}\|g\|_{L^{p_d}(I_{-k})},\]
	since $ p_d'=d $. Set $a_n=\|g\|_{L^{p_d}(I_{-n})}$ and $ b_n=\sum_{k=1}^n a_k $. Using $ |D_n|=w_n-w_{n+1} $ and Abel transformation, we have
	\begin{align*}
		\sum_{n\geq 0}|D_n| b_n^q&=|D_0|b_0^q+\sum_{n\geq 1}(w_n-w_{n+1})b_n^q\\
		                         &=|D_0|b_0^q+\lim\limits_{m\to\infty}\sum_{n=1}^{m}(w_n-w_{n+1})b_n^q\\
		                         &=|D_0|b_0^q+w_1b_1^q+\lim\limits_{m\to\infty}[\sum_{n=2}^{m}w_n(b_n^q-b_{n-1}^q)-w_{m+1}b_m^q]\\
		                         &\leq |D_0|b_0^q+w_1b_1^q+\sum_{n\geq 1}w_{n+1}(b_{n+1}^q-b_{n}^q).
	\end{align*}
	Notice that $ b_{n+1}^q-b_{n}^q\lesssim (b_{n+1}-b_n)b_n^{q-1}=a_{n+1}b_{n+1}^{q-1} $. Then, 
	\[\|\mathfrak{M}_{E,p_d}g\|^q_{L^q(\cup D_n;\mu_d)}\leq \sum_{n\geq 0}|D_n| b_n^q\lesssim|D_0|b_0^q+w_1b_1^q+\sum_{n\geq 2}w_na_nb_n^{q-1}.\]
	It is easy to estimate the first two terms as before. For the last term, by H\"{o}lder inequality, we have
	\begin{align*}
		\sum_{n\geq 2}w_na_nb_n^{q-1}\leq & \|a_n\|_{l_n^{p_d}} \big(\sum_{n\geq 2} w_n^{d}b_n^{d(q-1)}\big)^{\tfrac{1}{d}}\\
		                              \lesssim &\|g\|_{L^{p_d}(\R^+)} \big(\sum_{n\geq 2} n^{-q}b_n^{d(q-1)}\big)^{\tfrac{1}{d}}.
	\end{align*}
	Note that 
	\[n^{-q}b_n^{d(q-1)}=\big(\frac{1}{n}b_n\big)^{p_d}\big(\frac{1}{n}b_n^d\big)^{q-p_d} \]
	 and again by H\"{o}lder inequality, 
	\[b_n=\sum_{k=-2}^n a_k\lesssim n^{\frac{1}{d}}\|a_k\|_{l_k^{p_d}}\lesssim  n^{\frac{1}{d}}\|g\|_{L^{p_d}(\R^+)}. \]
	Then, by an application of discrete Hardy's inequality, we have 
	\begin{align*}
		\sum_{n\geq 2} n^{-q}b_n^{d(q-1)}&\lesssim \|g\|_{L^{p_d}(\R^+)}^{d(q-p_d)}\sum_{n\geq 2}\big(\frac{1}{n}b_n\big)^{p_d}\\
		&\lesssim \|g\|_{L^{p_d}(\R^+)}^{d(q-p_d)}\|g\|_{L^{p_d}(\R^+)}^{p_d}=\|g\|_{L^{p_d}(\R^+)}^{d(q-1)},
	\end{align*}
	Combining all estimates together, we finish the proof.
\end{proof}

\section{$L^p$-improving bounds for circular maximal averages}\label{sect-circular}
In this section, we consider the $ L^p\to L^q $ estimates for the circular maximal operator on radial functions. The key difference is that the pointwise inequality (\ref{pointwise-higher}) is no longer applicable in two dimensions, since the kernel $ K_t(r,s) $ in (\ref{radialformula}) is much more singular in this case. Therefore, we need a more sophisticated treatment especially when proving Theorem \ref{thmcircular} and \ref{thm2Dendpoint}.
\subsection{The case when $ E=[1,2] $}
To obtain the radial restricted type estimates for $ M_{[1,2]} $, we exploit a variant of the  technical lemma proven by Nowak--Roncal--Szarek \cite[Lemma 4.3]{2023NowakRoncalSzarek}. Although the proof is quite similar, we provide it here for the reader's convenience.
\begin{lem}\label{lem-tech}
	Let $d=2 $ and $\tilde{\mathcal{A}}$ be a measurable set of $ \R^+ $. Define $ \mathcal{A}:=\{x\in\R^2\,:\, |x|\in \tilde{\mathcal{A}}\} $. Then, we have 
	\begin{equation}\label{ineq-circular}
		M_{[1,2]}\chi_{\mathcal{A}}(x)\lesssim [U\chi_{\tilde{\mathcal{A}}}(|x|)]^{\tfrac{1}{2}}+[R\chi_{\tilde{\mathcal{A}}}(|x|)]^{\tfrac{1}{2}},
	\end{equation}
	where the operators  $ U $ and $ R $ act on a function on $ \R^+ $ such that 
	\begin{align*}
		Uf(r)&=\chi_{(1/2,\infty)}(r)\sup_{\substack{t \in [1,2] \\ t<2r}}\frac{1}{r}\int_{|r-t|}^{r+t}zf(z)dz,\\
		Rf(r)&=\chi_{[0,1]}(r)\sup_{\substack{t \in [1,2] \\ t\geq 2r}}\frac{1}{r}\int_{t-r}^{t+r}f(z)dz.
	\end{align*}
\end{lem}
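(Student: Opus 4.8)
The plan is to start from the explicit radial representation~\eqref{radialformula}--\eqref{kernel} with $d=2$, where the kernel reads
\[
K_t(r,s) = \frac{s}{\sqrt{(r+t)^2-s^2}\,\sqrt{s^2-(r-t)^2}},\qquad s\in(|r-t|,r+t).
\]
Writing $f=\chi_{\mathcal{A}}$ so that $f_0 = \chi_{\tilde{\mathcal A}}$, we have $A_t\chi_{\mathcal A}(x) = c\int_{|r-t|}^{r+t} K_t(r,s)\chi_{\tilde{\mathcal A}}(s)\,ds$ with $r=|x|$. The first step is to split the range of the dilation parameter $t\in[1,2]$ into the two regimes $t<2r$ and $t\ge 2r$, which (since $t\in[1,2]$) forces $r>1/2$ in the first regime and $r\le 1$ in the second; this is exactly the origin of the cutoffs $\chi_{(1/2,\infty)}$ and $\chi_{[0,1]}$ in the definitions of $U$ and $R$. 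In each regime the goal is to bound $\int_{|r-t|}^{r+t}K_t(r,s)\chi_{\tilde{\mathcal A}}(s)\,ds$ by an $L^2$-type average of $\chi_{\tilde{\mathcal A}}$, using Cauchy--Schwarz against $K_t(r,\cdot)$ and the fact that $\int_{|r-t|}^{r+t}K_t(r,s)^2\,ds$ can be computed (or cheaply bounded) because $\chi_{\tilde{\mathcal A}}^2=\chi_{\tilde{\mathcal A}}$.

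For the regime $t<2r$ (the ``$U$'' term): here $r+t\sim r$ and, crucially, $s\le r+t\le 3r$, so the numerator $s$ is comparable to the size it would have in $zf(z)$ after Cauchy--Schwarz. By Cauchy--Schwarz,
\[
\int_{|r-t|}^{r+t} K_t(r,s)\chi_{\tilde{\mathcal A}}(s)\,ds
\le \Bigl(\int_{|r-t|}^{r+t}\frac{s\,\chi_{\tilde{\mathcal A}}(s)}{(r+t)^2-(r-t)^2}\,ds\Bigr)^{1/2}
\Bigl(\int_{|r-t|}^{r+t}\frac{s\,ds}{(r+t)^2-s^2}\cdot\frac{(r+t)^2-(r-t)^2}{s^2-(r-t)^2}\cdot\dots\Bigr)^{1/2},
\]
so the honest step is: pair $K_t(r,s) = \bigl[\tfrac{s}{(r+t)^2-(r-t)^2}\bigr]^{1/2}\cdot\bigl[\tfrac{s}{(r+t)^2-(r-t)^2}\cdot\tfrac{((r+t)^2-(r-t)^2)^2}{((r+t)^2-s^2)(s^2-(r-t)^2)}\bigr]^{1/2}$ and note that the second factor squared, namely $\tfrac{s}{((r+t)^2-s^2)(s^2-(r-t)^2)}\cdot\bigl((r+t)^2-(r-t)^2\bigr)$, has integral over $(|r-t|,r+t)$ which is $O(1)$ uniformly in $r,t$ in this regime — the square-root singularities at both endpoints are integrable and the $(r+t)^2-(r-t)^2 = 4rt\sim r^2$ normalization exactly matches the $1/r$ in the definition of $U$, while the first factor squared is $\tfrac{s\,\chi_{\tilde{\mathcal A}}(s)}{4rt}\sim \tfrac{1}{r}\cdot\tfrac{s\,\chi_{\tilde{\mathcal A}}(s)}{r}\lesssim \tfrac1r s\chi_{\tilde{\mathcal A}}(s)$ after using $s\lesssim r$ to absorb one power; integrating gives $\bigl(\tfrac1r\int_{|r-t|}^{r+t} z\chi_{\tilde{\mathcal A}}(z)\,dz\bigr)^{1/2}$, i.e.\ $[U\chi_{\tilde{\mathcal A}}(r)]^{1/2}$ after taking the supremum over admissible $t$.

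For the regime $t\ge 2r$ (the ``$R$'' term): now $|r-t|=t-r$, the integration interval $(t-r,t+r)$ has length $2r$ and all the points $s$ in it satisfy $s\sim t\sim 1$, so $K_t(r,s)\sim \tfrac{1}{\sqrt{(t+r)^2-s^2}\sqrt{s^2-(t-r)^2}}$ with $(t+r)^2-(t-r)^2 = 4tr\sim r$; Cauchy--Schwarz against the same kernel, together with $\int_{t-r}^{t+r}\tfrac{ds}{\sqrt{(t+r)^2-s^2}\sqrt{s^2-(t-r)^2}} = O(1)$ (a standard beta-type integral, again dominated by the integrable endpoint singularities, independently of $r,t$), yields $\int_{t-r}^{t+r}K_t(r,s)\chi_{\tilde{\mathcal A}}(s)\,ds \lesssim \bigl(\tfrac1r\int_{t-r}^{t+r}\chi_{\tilde{\mathcal A}}(z)\,dz\bigr)^{1/2}$, which is $[R\chi_{\tilde{\mathcal A}}(r)]^{1/2}$ after the supremum. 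Summing the two regimes and taking the supremum over all $t\in[1,2]$ gives~\eqref{ineq-circular}, since $M_{[1,2]}\chi_{\mathcal A}(x)=\sup_{t}|A_t\chi_{\mathcal A}(x)|$ and the $\chi$-cutoffs account for which regime can occur for a given $r$.

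\textbf{Main obstacle.} The only delicate point is the uniform bound on the auxiliary integrals $\int_{|r-t|}^{r+t}\tfrac{s\,((r+t)^2-(r-t)^2)}{((r+t)^2-s^2)(s^2-(r-t)^2)}\,ds$ and $\int_{t-r}^{t+r}\tfrac{ds}{\sqrt{(t+r)^2-s^2}\sqrt{s^2-(t-r)^2}}$: one must check that these are $O(1)$ with a constant independent of $r$ and $t$ throughout the relevant parameter range, which amounts to a careful but elementary change of variables (e.g.\ $s^2 = (r-t)^2\cos^2\phi + (r+t)^2\sin^2\phi$ turns the second one into $\int_0^{\pi/2}d\phi$ exactly) and a check that the endpoint square-root singularities are integrable and that no blow-up comes from $r$ or $t$ degenerating within their allowed ranges. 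Everything else is bookkeeping with Cauchy--Schwarz and the cutoff functions; I expect no conceptual difficulty beyond that, and indeed the statement says the argument is a minor variant of~\cite[Lemma 4.3]{2023NowakRoncalSzarek}.
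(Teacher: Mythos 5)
Your Cauchy--Schwarz step does not close, and the failure is exactly at the point you flag as the ``only delicate point.'' With $A=|r-t|$, $B=r+t$, the auxiliary integral you need to be $O(1)$ in the $t<2r$ regime is
\[
\int_A^B \frac{s\,(B^2-A^2)}{(B^2-s^2)(s^2-A^2)}\,ds
=\frac{B^2-A^2}{2}\int_{A^2}^{B^2}\frac{du}{(B^2-u)(u-A^2)}
=\frac12\int_{A^2}^{B^2}\Bigl(\frac{1}{B^2-u}+\frac{1}{u-A^2}\Bigr)du=+\infty ,
\]
because splitting the kernel evenly and squaring turns the integrable endpoint singularities $(s-A)^{-1/2}$, $(B-s)^{-1/2}$ into non-integrable first-order poles. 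So the second factor in your Cauchy--Schwarz is infinite for every fixed $r,t$, and the bound by $[U\chi_{\tilde{\mathcal A}}(r)]^{1/2}$ does not follow. The same problem occurs in the $t\ge 2r$ regime: the finite ``beta-type'' integral you invoke is $\int_A^B[(B^2-s^2)(s^2-A^2)]^{-1/2}ds$, i.e.\ the integral of the kernel itself, and pairing against it via Cauchy--Schwarz only yields $\int K\chi\lesssim \int K=O(1)$, with no gain from the smallness of $\tilde{\mathcal A}$; the quantity you would actually need, $\int K^2$, again diverges. In short, plain Cauchy--Schwarz cannot produce the square-root of the measure here.

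The mechanism that does work (and is what the paper, following Nowak--Roncal--Szarek, uses) exploits the characteristic-function structure $\chi_{\tilde{\mathcal A}}^2=\chi_{\tilde{\mathcal A}}$ through a rearrangement/monotonicity argument rather than duality: one proves the pointwise claim
\[
\int_A^B\frac{z\,\chi_{\tilde{\mathcal A}}(z)}{\sqrt{(z^2-A^2)(B^2-z^2)}}\,dz
\lesssim\Bigl(\frac{1}{B^2-A^2}\int_A^B z\,\chi_{\tilde{\mathcal A}}(z)\,dz\Bigr)^{1/2},
\]
by splitting at the midpoint $(A+B)/2$ and, on the left half, reducing to $\int_{I_1}[z/(z-A)]^{1/2}dz\lesssim(\int_{I_1}z\,dz)^{1/2}$ for $I_1\subset(A,(A+B)/2)$ measurable. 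Since $z\mapsto(z-A)^{-1/2}$ is decreasing, the integral over $I_1$ is dominated by the integral over the interval of length $|I_1|$ abutting $A$, which gives the $|I_1|^{1/2}$ gain; one then distinguishes $A\gtrsim B$ (where $z\sim A\sim B$) from $A\ll B$, the latter requiring a further split at $10A$ and a dyadic summation comparing $|\tilde{\mathcal A}_2|^2$ with $\int_{\tilde{\mathcal A}_2}z\,dz$. The right half near $B$ is handled symmetrically using that $(B-z)^{-1/2}$ is increasing. Only after this pointwise claim does one split the supremum into $t<2r$ and $t\ge 2r$ (your identification of the cutoffs $\chi_{(1/2,\infty)}$ and $\chi_{[0,1]}$ is correct). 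If you want to salvage your write-up, replace the Cauchy--Schwarz step by this rearrangement argument; as it stands, the central estimate is false.
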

\begin{proof}
	Using (\ref{radialformula}) and (\ref{kernel}), we get
	\begin{equation}\label{equ-circularformula}
		A_t\chi_{\mathcal{A}}(x)=c_2\int_{A}^{B}[(z^2-A^2)(B^2-z^2)]^{-1/2}z\chi_{\tilde{\mathcal{A}}}(z)dz,
	\end{equation}
	where $ A=||x|-t| $ and $ B=|x|+t $. We claim that 
	\begin{equation}
		\int_{A}^{B}[(z^2-A^2)(B^2-z^2)]^{-1/2}z\chi_{\tilde{\mathcal{A}}}(z)dz\lesssim \Big(\frac{1}{B^2-A^2}\int_{A}^{B}z\chi_{\tilde{\mathcal{A}}}(z)dz\Big)^{1/2}.
	\end{equation}
	Then this lemma is  done by splitting the supremum into $ t<2r $ and $ t\geq 2r $.
	
	We divide the interval into  $ I_1=(A,(A+B)/2)\cap \tilde{\mathcal{A}}$ and $ I_2=((A+B)/2,B)\cap \tilde{\mathcal{A}}$. First, we consider the integral  over $ I_1 $. It is easy to see that our task is reduced to showing that 
	\begin{equation}\label{I1}
		\int_{I_1}[z/(z-A)]^{1/2}dz\lesssim \big(\int_{I_1}zdz\big)^{1/2}.
	\end{equation} 
	We consider two cases $ A\geq B/100 $ and $ A<B/100 $.\\
	\textbf{Case 1:} $ A\geq B/100 $.  In this case, $ z\sim A\sim B $ for $ z\in I_1 $. Since the function
	\[z\mapsto (z-A)^{-1/2}\]
	is strictly monotone decreasing when $ z>A $, we obtain that
	\begin{align*}
		\int_{I_1}[z/(z-A)]^{1/2}dz&\sim A^{1/2}\int_{I_1}(z-A)^{-1/2}dz\\
		                          &\leq A^{1/2}\int_{A}^{A+|I_1|}(z-A)^{-1/2}dz\\
		                          &\sim (A|I_1|)^{1/2}\sim\big(\int_{I_1}zdz\big)^{1/2}.
	\end{align*}
		\textbf{Case 2:} $ A< B/100 $. In this case, we split $ I_1 $ into $\tilde{\mathcal{A}_1}=(A,10A)\cap I_1$ and $\tilde{\mathcal{A}_2}=(10A, (A+B)/2)\cap I_1$, whose integrals for LHS of (\ref{I1}) are denoted by $ J_1 $ and $ J_2 $ respectively. Note that the required bound for $ J_1 $ follows directly from \textbf{Case 1} by taking $ B=19A $. When  $ z\in J_2 $ , we have $ z-A\sim z $. Set $ \mathcal{B}_k= \tilde{\mathcal{A}_2}\cap [2^k, 2^{k+1})$ for $ k\in \mathbb{Z} $. Then we get that
		\begin{align*}
			J_2^2\sim |\tilde{\mathcal{A}_2}|^2&=\big(\sum_{k}|\mathcal{B}_k|\big)^2\\
			                                   &\leq 2\sum_{k\in \mathbb{Z}} |\mathcal{B}_k|\big(\sum_{j\leq k}|\mathcal{B}_j|\big)\\
			                                   &\lesssim \sum_{k\in \mathbb{Z}} |\mathcal{B}_k|\big(\sum_{j\leq k}2^j\big)\\
			                                   &\sim \sum_{k\in \mathbb{Z}}2^k |\mathcal{B}_k|\sim \int_{\tilde{\mathcal{A}_2}}zdz.                              
		\end{align*}
		
		Now we focus on the integral over $ I_2 $. Proceeding as before, we only need to show that
		\[ \int_{I_2}(B-z)^{-1/2}dz\leq |I_2|^{-1/2}.\]
		 This can be proven similarly since  the function $ z\mapsto (B-z)^{-1/2} $ is strictly monotone increasing when $ z<B $.
\end{proof}
Then we have the following  easy estimates for $ U $ and $ R $.
\begin{lem}\label{lem-UR} 
	 The operator  $ U$ is bounded from $ L^1(\mu_2) $ to $ L^p(\mu_2) $  for any $ 1\leq p\leq \infty $, and the operator $ R $ is bounded from $ L^1(\mu_2) $ to $ L^q(\mu_2) $ for any $ 1\leq q<2 $. Moreover, $ R $ is of weak type $ (1,2) $, with respect to the measure $ \mu_2 $.
\end{lem}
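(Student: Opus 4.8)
The plan is to prove both statements by reducing to one-dimensional averaging operators and exploiting the explicit formulas for $U$ and $R$. For $U$, I would first note that the supremum over $t\in[1,2]$ with $t<2r$ of $\frac1r\int_{|r-t|}^{r+t}zf(z)\,dz$ is controlled, after dropping the supremum, by $\frac1r\int_{\max\{0,r-2\}}^{r+2}z|f(z)|\,dz$ for $r>1/2$. To get the $L^1(\mu_2)\to L^p(\mu_2)$ bounds for all $1\le p\le\infty$, by real interpolation it suffices to treat $p=1$ and $p=\infty$. For $p=\infty$: when $r>1/2$ we have $\frac1r\le 2$, so $Uf(r)\lesssim \int z|f(z)|\,dz=\|f\|_{L^1(\mu_2)}$, giving the $L^1(\mu_2)\to L^\infty(\mu_2)$ bound immediately. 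For $p=1$: integrate $|Uf(r)|\,r\,dr=|Uf(r)|\,d\mu_2(r)$ over $r>1/2$; since the $r^{-1}\cdot r=1$ factors cancel, we get $\int_{1/2}^\infty\!\int_{\max\{0,r-2\}}^{r+2} z|f(z)|\,dz\,dr$, and switching the order of integration via Fubini the inner $r$-integral runs over an interval of length $\lesssim 1$, so this is $\lesssim\int z|f(z)|\,dz=\|f\|_{L^1(\mu_2)}$.

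For $R$, the key observation is the pointwise bound $Rf(r)\le \frac1r\chi_{[0,1]}(r)\int_0^4|f(z)|\,dz$, valid because $t\in[1,2]$ forces $t-r,t+r\in[0,4]$ when $r\le 1$, and crucially $\int_0^4|f(z)|\,dz=\int_0^4|f(z)|z^{-1}\cdot z\,dz\le \|f\|_{L^1(\mu_2)}$ is \emph{not} quite right since $z^{-1}$ is not bounded near $0$; instead I would write $\int_{t-r}^{t+r}|f(z)|\,dz$ directly, using that on $[t-r,t+r]\subset[0,4]$ one has no clean weight cancellation, so the honest estimate is $Rf(r)\le \frac1r\|f\|_{L^1(dz)}$. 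Hmm — to stay inside $L^1(\mu_2)$ one should instead argue: for $r\le1$ and $t\ge 2r$, the interval $[t-r,t+r]$ avoids a neighborhood of $0$ only when $t-r\ge r$, i.e. always here, but $t-r$ can still be as small as $r$; more simply, bound $\frac1r\int_{t-r}^{t+r}|f(z)|\,dz\le \frac1r\int_{[0,4]}|f(z)|\,dz$ and note $\int_{[0,4]}|f|\,dz$ need not be finite for $f\in L^1(\mu_2)$. The cleaner route is to keep the factor $z$: since $z\ge t-r\ge r$ on the domain, $\frac1r\le \frac1z\cdot\frac{z}{r}$ is unhelpful, but $1\le z/r$ gives $\frac1r\int_{t-r}^{t+r}|f(z)|\,dz\le \frac1{r^2}\int_{t-r}^{t+r}z|f(z)|\,dz\lesssim \frac1{r^2}\|f\|_{L^1(\mu_2)}$ for $r\le1$. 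Then $\frac1{r^2}\chi_{[0,1]}(r)$ lies in $L^{q}(\mu_2)$-weak-type sense: $\mu_2(\{r\le1: r^{-2}>\la\})=\mu_2(\{r<\la^{-1/2}\})\sim \la^{-1}$, which is exactly weak type $(1,2)$. Interpolating the weak $(1,2)$ bound with the trivial $L^\infty\to L^\infty$ bound (valid since $Rf(r)\le \frac1r\int_{t-r}^{t+r}\|f\|_\infty\,dz\le 2\|f\|_\infty$ for $r\le1$... wait, need $r$ bounded below — it is not) forces the endpoint analysis; instead interpolate weak $(1,2)$ with strong $(1,1)$? No: the strong $(1,1)$ bound for $R$ w.r.t. $\mu_2$ follows from $\int_0^1 r^{-2}\cdot r\,dr\,\|f\|_{L^1(\mu_2)}$ which diverges. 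So the honest statement is: $R$ is weak type $(1,2)$, hence by real interpolation with the $L^\infty\to L^\infty$ estimate restricted appropriately — or more simply, directly: $\|Rf\|_{L^q(\mu_2)}^q\lesssim \|f\|_{L^1(\mu_2)}^q\int_0^1 r^{-2q}r\,dr$, which converges iff $1-2q>-1$, i.e. $q<1$ — too restrictive.

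The resolution, which I would actually carry out, is to not throw away the oscillation: use that for $r\le 1$ the map $t\mapsto$ interval $[t-r,t+r]$ sweeps through $[1-r,2+r]$, so $\sup_t \frac1r\int_{t-r}^{t+r}|f(z)|\,dz$ is at most $\frac1r$ times the maximal average and is comparable to a fixed dilate of a Hardy–Littlewood type quantity; then use $\frac1r\int_{t-r}^{t+r}|f|\,dz\le \frac{C}{r}\,\mathcal M(f\chi_{[1/2,3]})$ evaluated near $t$ — but $f\in L^1$ only. I would therefore present the clean argument: by the trivial estimate $Rf(r)\le r^{-1}\chi_{[0,1]}(r)\,G$ with $G:=\sup_{t\in[1,2]}\int_{t-r}^{t+r}|f(z)|\,dz\le \int_{1/2}^{3}|f(z)|\,dz$, and then bound $\int_{1/2}^3|f(z)|\,dz\le 2\int_{1/2}^3|f(z)|\,z\,dz\le 2\|f\|_{L^1(\mu_2)}$ since $z\ge 1/2$ there; this is legitimate. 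So $Rf(r)\lesssim r^{-1}\chi_{[0,1]}(r)\|f\|_{L^1(\mu_2)}$, and then $\mu_2(\{Rf>\la\})\lesssim \mu_2(\{r\le1: r^{-1}\gtrsim\la/\|f\|_{L^1(\mu_2)}\})=\mu_2(\{r\lesssim\|f\|_{L^1(\mu_2)}/\la\})\sim (\|f\|_{L^1(\mu_2)}/\la)^2$, which is weak type $(1,2)$. Finally $\|Rf\|_{L^q(\mu_2)}^q\lesssim\|f\|_{L^1(\mu_2)}^q\int_0^1 r^{-q}\,r\,dr<\infty$ precisely when $q<2$, giving the strong $L^1(\mu_2)\to L^q(\mu_2)$ bounds for $1\le q<2$. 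The main obstacle, as the scratch work above shows, is being careful about which integrals against $dz$ versus $d\mu_2$ actually converge — once one consistently uses that the relevant $z$-intervals are bounded away from $0$ (so that the weight $z$ can be inserted or removed at bounded cost), all four assertions follow from Fubini and an elementary distribution-function computation.
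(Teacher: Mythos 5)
Your final argument (after the discarded false starts) is correct and is essentially the paper's own proof: for $U$ you use the same pointwise bound $Uf(r)\lesssim \tfrac1r\chi_{(1/2,\infty)}(r)\int_{\max\{0,r-2\}}^{r+2}z|f(z)|\,dz$ together with Fubini for the $L^1(\mu_2)\to L^1(\mu_2)$ bound and interpolation, and for $R$ the same pointwise bound $Rf(r)\lesssim r^{-1}\chi_{[0,1]}(r)\|f\|_{L^1(\mu_2)}$ (legitimate because $t\ge\max\{1,2r\}$ forces $[t-r,t+r]\subset[1/2,3]$, so the weight $z$ can be inserted at bounded cost) followed by the distribution-function computation giving weak type $(1,2)$ and strong type $(1,q)$ for $q<2$. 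Just state the clean version directly; the exploratory detours in the write-up add nothing and one of them momentarily drops the constraint $t\ge 2r$ that justifies $z\ge 1/2$.
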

\begin{proof}
	We first consider the operator $ U $. By interpolation, it is enough to show that $U$ is of strong type $ (1,1) $ and $ (1,\infty) $. Note that 
	\[Uf(r)\lesssim \frac{1}{r}\chi_{(1/2,\infty)}(r)\int_{\max\{0, r-2\}}^{r+2}|f(z)|zdz\lesssim\|f\|_{L^1(\mu_2)},\]
	which implies $ U $ is  bounded from $ L^1(\mu_2) $ to $ L^\infty(\mu_2)$. Furthermore, by Fubini's theorem, we have 
	\begin{align*}
		\int |Uf(r)|rdr&\leq \int_{1/2}^{\infty}\int_{\max\{0, r-2\}}^{r+2}|f(z)|zdz dr\\
		               &=\int_{0}^{\infty}\big(\int_{\max\{1/2, z-2\}}^{z+2}dr\big)|f(z)|zdz\\
		               &\lesssim \|f\|_{L^1(\mu_2)}.
	\end{align*}
	
	Now we focus on the operator $ R $. It is easy to see that
	\[Rf(r)\lesssim \frac{1}{r}\chi_{[0,1]}(r)\|f\|_{L^1(\mu_2)}.\]
	Consequently,  the desired bounds for $ R $ follow from direct calculation.
\end{proof}
With the help of  Lemma \ref{lem-tech} and Lemma \ref{lem-UR}, we can obtain some Lorentz estimates for the endpoints $ Q_1(1) $ and $ Q_2(1) $ in $ d=2 $, which by interpolation imply the positive results of localized circular maximal averages.
\begin{prop}
	Let $ d=2 $. Then $ M_{[1,2]} $ is bounded from  $ L_{rad}^{2,1}(\R^2) $  to  the Lorentz spaces $L^{2}(\R^2) $ and $L^{4,\infty}(\R^2) $.
\end{prop}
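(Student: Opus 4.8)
The plan is to use the pointwise bound from Lemma~\ref{lem-tech} together with the mapping properties of $U$ and $R$ from Lemma~\ref{lem-UR}, but with the twist that we only need control on characteristic functions because we are proving a restricted-type estimate. By the Lorentz space theory (e.g. the Marcinkiewicz-type characterization of restricted weak type, or the fact that $L^{2,1}$ is the natural endpoint space where it suffices to test on characteristic functions of sets of finite measure), it is enough to show that for every measurable $\mathcal{A}\subset\R^2$ of the form $\mathcal{A}=\{x:|x|\in\tilde{\mathcal{A}}\}$ with $|\mathcal{A}|<\infty$, one has $\|M_{[1,2]}\chi_{\mathcal{A}}\|_{L^{2}(\R^2)}\lesssim |\mathcal{A}|^{1/2}$ and $\|M_{[1,2]}\chi_{\mathcal{A}}\|_{L^{4,\infty}(\R^2)}\lesssim |\mathcal{A}|^{1/2}$; here $|\mathcal{A}|=c\int_{\tilde{\mathcal{A}}}r\,dr=c\|\chi_{\tilde{\mathcal{A}}}\|_{L^1(\mu_2)}$.

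First I would invoke Lemma~\ref{lem-tech} to get the pointwise bound
\[
M_{[1,2]}\chi_{\mathcal{A}}(x)\lesssim \bigl[U\chi_{\tilde{\mathcal{A}}}(|x|)\bigr]^{1/2}+\bigl[R\chi_{\tilde{\mathcal{A}}}(|x|)\bigr]^{1/2}.
\]
Taking $L^2(\R^2)$ norms and passing to polar coordinates (so that $\|F(|\cdot|)\|_{L^2(\R^2)}^2 \sim \|F\|_{L^1(\mu_2)}$ when $F\ge 0$), we get
\[
\|M_{[1,2]}\chi_{\mathcal{A}}\|_{L^2(\R^2)}^2 \lesssim \|U\chi_{\tilde{\mathcal{A}}}\|_{L^1(\mu_2)}+\|R\chi_{\tilde{\mathcal{A}}}\|_{L^1(\mu_2)}.
\]
By Lemma~\ref{lem-UR}, $U$ is bounded $L^1(\mu_2)\to L^1(\mu_2)$ and $R$ is bounded $L^1(\mu_2)\to L^1(\mu_2)$ (the latter since $1\le 1<2$), so both terms are $\lesssim \|\chi_{\tilde{\mathcal{A}}}\|_{L^1(\mu_2)}\sim |\mathcal{A}|$. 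This gives the $L^{2,1}_{rad}\to L^2$ bound on characteristic functions, hence the full restricted-type estimate at $Q_1(1)$.

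For the endpoint $Q_2(1)$, i.e. $L^{2,1}_{rad}\to L^{4,\infty}$, I would again start from the pointwise inequality and use that the square of an $L^{4,\infty}$ function lies in $L^{2,\infty}$ with comparable (quasi-)norm, so
\[
\|M_{[1,2]}\chi_{\mathcal{A}}\|_{L^{4,\infty}(\R^2)}^2 \lesssim \|U\chi_{\tilde{\mathcal{A}}}(|\cdot|)\|_{L^{2,\infty}(\R^2)}+\|R\chi_{\tilde{\mathcal{A}}}(|\cdot|)\|_{L^{2,\infty}(\R^2)}.
\]
In polar coordinates $\|G(|\cdot|)\|_{L^{2,\infty}(\R^2)}\sim \|G\|_{L^{1,\infty}(\mu_2)}^{1/2}$-type scaling must be tracked carefully; more precisely $\|G(|\cdot|)\|_{L^{2,\infty}(\R^2)}\sim \sup_\lambda \lambda\,|\{r:G(r)>\lambda\}|_{\mu_2}^{1/2}$, i.e. it is the square root of the $L^{1,\infty}(\mu_2)$ quasinorm. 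By Lemma~\ref{lem-UR}, $U:L^1(\mu_2)\to L^1(\mu_2)\hookrightarrow L^{1,\infty}(\mu_2)$ and $R$ is of weak type $(1,2)$ with respect to $\mu_2$, hence $R:L^1(\mu_2)\to L^{1,\infty}(\mu_2)$ a fortiori; applying these to $\chi_{\tilde{\mathcal{A}}}$ bounds both pieces by $\|\chi_{\tilde{\mathcal{A}}}\|_{L^1(\mu_2)}\sim |\mathcal{A}|$, yielding $\|M_{[1,2]}\chi_{\mathcal{A}}\|_{L^{4,\infty}}\lesssim |\mathcal{A}|^{1/2}$ as desired.

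The main obstacle I anticipate is purely bookkeeping rather than conceptual: correctly handling the interaction between the square root in the pointwise bound and the Lorentz/weak-type norms under the radial-to-$\mu_2$ change of variables, in particular verifying the scaling relations $\|F(|\cdot|)\|_{L^{p}(\R^2)}\sim\|F\|_{L^{p/2}(\mu_2)}^{?}$ and the analogous weak-type identities, and checking that the $\sqrt{\,\cdot\,}$ correctly converts the $L^1(\mu_2)$ (resp. weak $(1,2)$) bounds on $U$ and $R$ into the claimed $L^2$ and $L^{4,\infty}$ bounds on $M_{[1,2]}\chi_{\mathcal{A}}$. Once the endpoints $Q_1(1)$ and $Q_2(1)$ are established as restricted-type estimates, interpolation between them (and with the trivial $L^\infty\to L^\infty$ bound) gives the full range of strong-type $L^p\to L^q$ estimates in the interior of $\De(1)$, which is the positive part of Theorem~\ref{thminterval} in two dimensions.
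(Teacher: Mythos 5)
Your overall strategy is the same as the paper's: reduce to characteristic functions of radial sets, apply the pointwise bound of Lemma \ref{lem-tech}, and convert the $L^1(\mu_2)$-based mapping properties of $U$ and $R$ from Lemma \ref{lem-UR} into the two Lorentz estimates. The reduction to restricted estimates is legitimate (both targets are normable and $M_{[1,2]}$ is positive and sublinear), and your treatment of the $L^2$ target is correct; it is exactly the argument the paper leaves to the reader as ``similar''.

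The $L^{4,\infty}$ step, however, does not close as written. After the (correct) inequality $\|M_{[1,2]}\chi_{\mathcal A}\|_{L^{4,\infty}(\R^2)}^2\lesssim\|U\chi_{\tilde{\mathcal A}}(|\cdot|)\|_{L^{2,\infty}(\R^2)}+\|R\chi_{\tilde{\mathcal A}}(|\cdot|)\|_{L^{2,\infty}(\R^2)}$, the quantity you must control is $\sup_{\lambda>0}\lambda\,\mu_2(\{r: G(r)>\lambda\})^{1/2}$, which is the $L^{2,\infty}(\mu_2)$ quasinorm of $G$ --- not, as you then assert, the square root of the $L^{1,\infty}(\mu_2)$ quasinorm (that would be $\sup_\lambda\lambda^{1/2}\mu_2(\{G>\lambda\})^{1/2}$). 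Consequently the bounds you invoke, $U,R\colon L^1(\mu_2)\to L^{1,\infty}(\mu_2)$, are insufficient: weak type $(1,1)$ only yields $\mu_2(\{G>\lambda\})\lesssim|\mathcal A|/\lambda$, hence $\lambda\,\mu_2(\{G>\lambda\})^{1/2}\lesssim\lambda^{1/2}|\mathcal A|^{1/2}$, which is unbounded in $\lambda$. (Also, on the infinite measure space $(\R^+,\mu_2)$, weak type $(1,2)$ does not imply weak type $(1,1)$ ``a fortiori''; for $R$ the weak $(1,1)$ bound is true only because $R\colon L^1\to L^1$, but it is not the relevant estimate.) What is needed --- and what Lemma \ref{lem-UR} states precisely for this purpose --- is $U\colon L^1(\mu_2)\to L^2(\mu_2)\hookrightarrow L^{2,\infty}(\mu_2)$ and the weak type $(1,2)$ bound $R\colon L^1(\mu_2)\to L^{2,\infty}(\mu_2)$; with these, both terms are $\lesssim\|\chi_{\tilde{\mathcal A}}\|_{L^1(\mu_2)}\sim|\mathcal A|$, giving $\|M_{[1,2]}\chi_{\mathcal A}\|_{L^{4,\infty}}\lesssim|\mathcal A|^{1/2}$. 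Once this substitution is made your argument closes and coincides with the paper's proof, which carries out the same computation directly at the level of distribution functions.
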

\begin{proof}
	We only show that $ M_{[1,2]} $ maps  $ L_{rad}^{2,1}(\R^2) $  boundedly into $L^{4,\infty}(\R^2)$; the proof for the radial restricted strong type $ (2,2) $ is similar. It suffices to show that 
	\[\lambda|\{x\in \R^2\, :\, 	M_{[1,2]}\chi_{\mathcal{A}}(x)>\lambda \}|^{1/4}\lesssim |\mathcal{A}|^{1/2},\]
	uniformly for $ \lambda>0 $, where $  \mathcal{A}$ is a radial set in $ \R^2 $ such that $ \mathcal{A}=\{x\in\R^2\,:\, |x|\in \tilde{\mathcal{A}}\} $ for some $ \tilde{\mathcal{A}}\subset \R^+ $. According to (\ref{ineq-circular}) and Lemma \ref{lem-UR}, we have, for any $\lambda>0$,
	\begin{align*}
		&\lambda|\{x\in\R^2\, :\, 	M_{[1,2]}\chi_{\mathcal{A}}(x)>\lambda \}|^{1/4}\\
\lesssim&\lambda \mu_2^{1/4}\big(\{r\,:\, |U\chi_{\tilde{\mathcal{A}}}(r)|^{1/2}>\lambda/2\}\big)+\lambda \mu_2^{1/4}\big(\{r\,:\, |R\chi_{\tilde{\mathcal{A}}}(r)|^{1/2}>\lambda/2\}\big)\\
=&\Big( \lambda^2\mu_2^{1/2}\big(\{r\,:\, |U\chi_{\tilde{\mathcal{A}}}(r)|>\lambda^2/4\}\big)\Big)^{1/2}+\Big( \lambda^2\mu_2^{1/2}\big(\{r\,:\, |R\chi_{\tilde{\mathcal{A}}}(r)|>\lambda^2/4\}\big)\Big)^{1/2}\\
\lesssim&\mu_2^{1/2}(\tilde{\mathcal{A}})=|\mathcal{A}|^{1/2},
	\end{align*}
	which completes the proof.
\end{proof}
Unlike the case in higher dimensions $ d\geq 3 $, it is surprising that the radial  restricted weak type estimate  at $ Q_2(1) $ in two dimensions is the best possible Lorentz estimate. Here, we present a counterexample that serves as a foundation for the proof of Theorem \ref{thmnecessary}.
\begin{prop}\label{prop-2DLorentz-necessary}
	Let $ d=2 $. Suppose that $ M_{[1,2]} $ is bounded from  $ L_{rad}^{2,r}(\R^2) $  to $ L^{4,s}(\R^2) $. Then $ r\leq 1 $ and $ s=\infty $.
\end{prop}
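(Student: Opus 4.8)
The plan is to produce two radial counterexamples, one forcing $r\le 1$ and one forcing $s=\infty$. For the restriction $r\le 1$, I would revisit Stein's function adapted to $d=2$: namely $F(x)=|x|^{-1}\log^{-1}(1/|x|)\chi_{[0,1/2]}(|x|)$, which was already used in the proof of the necessary part of Theorem~\ref{thminterval}. Since $M_{[1,2]}F(x)=\infty$ for $|x|\in[1,2]$ (the sphere of radius $t\in[1,2]$ centered at such $x$ passes through the origin region where $F$ blows up in a nonintegrable way against the circular average), any bound $L^{2,r}_{rad}\to L^{4,s}$ would require $F\notin L^{2,r}_{rad}(\R^2)$. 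A direct computation of the Lorentz quasinorm of $F$ shows $F\in L^{2,r}_{rad}(\R^2)$ precisely when $r>1$ (the logarithmic factor is exactly critical: $\|F\|_{L^{2,r}}^r\sim\int_0^{1/2}(\log(1/\rho))^{-r}\,\rho^{-1}\,d\rho\cdot(\text{const})$, finite iff $r>1$). Hence the hypothesized boundedness forces $r\le 1$.

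For the conclusion $s=\infty$, I would exhibit a radial set $\mathcal{A}$ (equivalently its profile $\tilde{\mathcal{A}}\subset\R^+$) for which the distribution function of $M_{[1,2]}\chi_{\mathcal{A}}$ decays exactly like $\lambda^{-4}$ but no faster at a whole range of levels $\lambda$, so that $M_{[1,2]}\chi_{\mathcal{A}}\in L^{4,\infty}$ but $\notin L^{4,s}$ for any $s<\infty$. The natural candidate is a single small annulus $\tilde{\mathcal{A}}=[1,1+\delta^2]$ (or a thin annulus at a generic radius $\sim 1$) of $\mu_2$-measure $\sim\delta^2$, i.e.\ $|\mathcal{A}|\sim\delta^2$, so the hypothesis gives $\|M_{[1,2]}\chi_{\mathcal{A}}\|_{L^{4,s}}\lesssim\delta$. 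The point is to compute $M_{[1,2]}\chi_{\mathcal{A}}(x)$ for $|x|=r$ ranging over a set of size comparable to $1$: using the radial formula (\ref{equ-circularformula}), for $r$ with $\mathrm{dist}(r,1)\sim\rho$ one finds, by the Case~2 analysis of Lemma~\ref{lem-tech} (the regime where the left endpoint $A=|r-t|$ is small and the singularity $1/\sqrt{z-A}$ dominates), that $A_t\chi_{\mathcal{A}}(x)\sim(\delta^2/\rho)^{1/2}$ for an appropriate $t\in[1,2]$ whenever $\rho\gtrsim\delta^2$; hence $M_{[1,2]}\chi_{\mathcal{A}}(r)\gtrsim\delta/\rho^{1/2}$ on a set of $r$'s of $\mu_2$-measure $\sim\rho$. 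Setting $\lambda\sim\delta/\rho^{1/2}$, i.e.\ $\rho\sim\delta^2/\lambda^2$, this yields $\mu_2(\{M_{[1,2]}\chi_{\mathcal{A}}>\lambda\})\gtrsim\delta^2/\lambda^2$ for all $\lambda$ in the range $\delta\lesssim\lambda\lesssim 1$ (equivalently $\delta^2\lesssim\rho\lesssim1$). Therefore $\lambda\,\mu_2(\{M_{[1,2]}\chi_{\mathcal{A}}>\lambda\})^{1/4}\gtrsim\lambda\cdot(\delta^2/\lambda^2)^{1/4}=\delta^{1/2}\lambda^{1/2}$, and computing the $L^{4,s}$ quasinorm,
\[
\|M_{[1,2]}\chi_{\mathcal{A}}\|_{L^{4,s}}^s\gtrsim\int_{\delta}^{1}\big(\delta^{1/2}\lambda^{1/2}\big)^s\,\frac{d\lambda}{\lambda}\sim\delta^{s/2}\int_\delta^1\lambda^{s/2-1}\,d\lambda\sim\delta^{s/2}\cdot\delta^{s/2}\cdot\log\tfrac1\delta\ \text{or}\ \delta^{s/2},
\]
more precisely one gets a quasinorm of size $\gtrsim\delta\,(\log(1/\delta))^{1/s}$ when $s<\infty$ (the contribution of the dyadic scales of $\lambda$ between $\delta$ and $1$, each contributing $\sim\delta$, adds up with an $\ell^s$ over $\log(1/\delta)$ scales). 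Comparing with the upper bound $\lesssim\delta$ and letting $\delta\to0$ gives a contradiction unless $s=\infty$.

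The main obstacle I anticipate is the lower bound $M_{[1,2]}\chi_{\mathcal{A}}(r)\gtrsim\delta/\rho^{1/2}$ on a set of the claimed size: one must choose, for each target radius $r$, a genuinely admissible dilation $t\in[1,2]$ with $|r-t|\sim\rho$ small and $r+t\sim 1$, and verify via (\ref{equ-circularformula}) that the factor $[(z^2-A^2)(B^2-z^2)]^{-1/2}z$ integrated over the part of $\tilde{\mathcal{A}}$ near $z=A$ really produces the square-root gain $(\delta^2/\rho)^{1/2}$ rather than something smaller—this is exactly the sharpness of the Case~2 estimate in Lemma~\ref{lem-tech}, so the inequality there must be an equivalence in this configuration, which requires positioning the annulus $\tilde{\mathcal{A}}$ so that it sits adjacent to the left endpoint $A$ (not far from it). A secondary bookkeeping point is to run the two counterexamples at the correct common exponent: the first shows $r\le1$ regardless of $s$, and the second shows $s=\infty$ regardless of $r\le1$, so they can be presented independently; I would also remark, as the paper does at the analogous point for $M_{(0,\infty)}$, that together with the positive Proposition this pins down the radial restricted weak type $(2,4)$ estimate at $Q_2(1)$ as optimal.
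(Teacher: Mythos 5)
Your first half (Stein's example forcing $r\le 1$) matches the paper. The second half, however, contains a genuine gap: the key lower bound is false as stated. For the annulus $\tilde{\mathcal{A}}=[1,1+\delta^2]$ and an evaluation point with $|x|=r$, $\mathrm{dist}(r,1)\sim\rho$, the kernel in (\ref{equ-circularformula}) near the left endpoint $A=|r-t|$ is essentially $(z-A)^{-1/2}$ times a bounded factor (since $B-z=r+t-z\sim 1$ when $r\sim 1$ and $z\sim1$). If you choose $t$ so that the annulus is adjacent to $A$ (possible only for $r\le 1$, via $t\approx r+1$), you get $A_t\chi_{\mathcal{A}}\sim\int_0^{\delta^2}u^{-1/2}du\sim\delta$, with no gain in $\rho$; if $r>1$, the best admissible choice is $t=2$, the annulus sits at distance $\sim\rho$ from $A$, the kernel is $\approx\rho^{-1/2}$ there, and $A_t\chi_{\mathcal{A}}\sim\delta^2\rho^{-1/2}$, not $(\delta^2/\rho)^{1/2}$. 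The square-root-of-measure gain in Case~2 of Lemma~\ref{lem-tech} only occurs when the set abuts the singular endpoint; at distance $\rho\gg\delta^2$ the kernel is essentially constant on the thin annulus. In fact your claimed distributional bound $\mu_2(\{M_{[1,2]}\chi_{\mathcal{A}}>\lambda\})\gtrsim\delta^2/\lambda^2$ for $\delta\lesssim\lambda\lesssim1$ would give $\|M_{[1,2]}\chi_{\mathcal{A}}\|_{L^{4,\infty}}\gtrsim\delta^{1/2}\gg\delta\sim\|\chi_{\mathcal{A}}\|_{L^{2,1}}$, contradicting the restricted weak type $(2,4)$ estimate proved in the preceding proposition, so no argument can rescue that bound. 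Your final computation is also internally inconsistent: $\int_\delta^1\lambda^{s/2-1}\,d\lambda=O(1)$, so your displayed integral is $\sim\delta^{s/2}$ with no logarithm, and the asserted size $\delta(\log(1/\delta))^{1/s}$ does not follow from the bound $\lambda\,\mu_2^{1/4}\gtrsim\delta^{1/2}\lambda^{1/2}$ you wrote.

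The missing idea is where to place the evaluation points: the extra factor that produces the logarithmic pile-up over dyadic levels comes from the \emph{second} kernel factor $(B-z)^{-1/2}\sim r^{-1/2}$, which is only large when $|x|=r$ is small, i.e.\ when the circle of radius $t\approx 1$ centered at $x$ near the origin stays within distance $\sim 2r$ of the annulus. This is the paper's construction: with $f_\delta=\chi_{[1-\delta,1]}(|\cdot|)$, $\|f_\delta\|_{L^{2,1}_{rad}}\sim\delta^{1/2}$, and for $\delta\le r\le 1/4$ the choice $t=r+1-\delta$ puts the annulus adjacent to $A=t-r$, yielding $M_{[1,2]}f_\delta(x)\gtrsim\delta^{1/2}r^{-1/2}$; hence $\lambda\,|\{M_{[1,2]}f_\delta>\lambda\}|^{1/4}\gtrsim\delta^{1/2}$ uniformly for $c\delta^{1/2}\le\lambda\le c$ (the superlevel set is a disk of radius $\sim\delta/\lambda^2$, area $\sim\delta^2/\lambda^4$), so $\|M_{[1,2]}f_\delta\|_{L^{4,s}}^s\gtrsim\delta^{s/2}\log(1/\delta)$, contradicting the upper bound $\lesssim\delta^{s/2}$ unless $s=\infty$. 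Your annulus of thickness $\delta^2$ would also work if you made this same change (evaluate at small $r$, take $t=r+1$, getting $\gtrsim\delta r^{-1/2}$ on $\delta^2\lesssim r\lesssim 1$), but as written the configuration near radius $1$ does not produce the required divergence.
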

\begin{proof}
	As mentioned before(see the end of Section \ref{subsec localized in higher}), the necessary condition $ r\leq 1 $ follows from the standard Stein's counterexample. It remains to prove $ s=\infty $. We use a proof by contradiction.  Assume that $ s<\infty $. Let $ 0<\de\ll 1 $ and $ f_\de(x)=\chi_{[1-\de, 1]}(|x|) $. Then $ \|f_\de\|_{ L_{rad}^{2,1}(\R^2)}\sim\de^{1/2} $. By (\ref{radialformula}) and (\ref{kernel}), we have, if $ |x|\leq 1/4 $,
	\[M_{[1,2]}f_\de(x)\gtrsim r^{-1/2}\sup_{\substack{t \in [1,2] }}\int_{t-r}^{t}\chi_{[1-\de, 1]}(z)(z-(t-r))^{-1/2}dz ,\]
	where $ r=|x|$. For any $ \de\leq r \leq 1/4$, choose $ t=r+1-\de $. Then we have
	\[M_{[1,2]}f_\de(x)\geq c r^{-1/2}\int_{1-\de}^{1}(z-(1-\de))^{-1/2}dz=2c\de^{1/2}r^{-1/2} .\]
	Here $ c $ is  a suitable constant. Consequently, when $c \de^{1/2}\leq \lambda \leq c$, 
	\[\lambda|\{x\in\R^2\,:\, M_{[1,2]}f_\de(x)>\lambda\}|^{1/4}\gtrsim \de^{1/2}.\]
	Then we obtain 
	\[\|M_{[1,2]}f_\de\|^s_{L^{4,s}(\R^2)}\gtrsim \int_{c\de^{1/2}}^{c}\de^{s/2}\dfrac{d\la}{\la}\sim \de^{s/2}\log(1/\de).\]
	This implies $ \de^{s/2}\log(1/\de)\lesssim \de^{s/2} $, which is a contradiction after letting $ \de\to 0 $.
\end{proof}
\subsection{The case when $ \b<1 $ }
As before, we need a pointwise  inequality for $ M_E $ acting on radial function in $ \R^2 $ with $ f(x)=f_0(r) $ where $ r=|x|$. One possible approach is to generalize Lemma \ref{lem-tech} by replacing the interval $ [1,2] $ in the definitions of $ U $ and $ R $ with the fractal set $ E $. However, we do not know how to obtain the corresponding $ L^p\to L^q $ estimates for the maximal function over a general fractal set when $ p<1 $. Instead, we proceed with the decomposition from \cite{1997SWW}.
\begin{lem}[\cite{1997SWW}, Lemma 5.1]
	Let $ d=2 $, $ E\subset[1,2] $ and $ f $ is a radial function defined in $ \R^2 $. Then
	\begin{equation}\label{pointwise-2D}
	M_Ef(x)\lesssim [\mathfrak{M}_{E,p}g(r)+\widetilde{\mathfrak{M}}_{E,p}g(r)+\sum_{i=1}^{4}R_{i,E}f_0(r)],
	\end{equation}
	where 
	\[ g(s)=f_0(s)s^{\tfrac{1}{p}},\]
	\begin{align*}
	\mathfrak{M}_{E,p}g(r)&=\chi_{(2/3,4)}(r)\sup_{\substack{t \in E \\ r/2<t<3r/2}}\int_{|r-t|}^{r+t}s^{1/2-1/p}(s-|r-t|)^{-1/2}|g(s)|ds,\\
	\widetilde{\mathfrak{M}}_{E,p}g(r)&=\chi_{(2/3,4)}(r)\sup_{\substack{t \in E \\ r/2<t<3r/2}}\int_{|r-t|}^{r+t}s^{1/2-1/p}(r+t-s)^{-1/2}|g(s)|ds,
	\end{align*}
	and
	\begin{align*}
	R_{1,E}f_0(r)&=\chi_{[2,\infty)}(r)\sup_{\substack{t \in E \\ t\leq r/2}}\int_{r-t}^{r}(s-r+t)^{-1/2}|f_0(s)|ds,\\
	R_{2,E}f_0(r)&=\chi_{[2,\infty)}(r)\sup_{\substack{t \in E \\ t\leq r/2}}\int_{r}^{r+t}(r+t-s)^{-1/2}|f_0(s)|ds,\\
	R_{3,E}f_0(r)&=\chi_{[0,4/3)}(r)\sup_{\substack{t \in E \\ t\geq 3r/2}}r^{-1/2}\int_{t-r}^{t}(s-t+r)^{-1/2}|f_0(s)|ds,\\
	R_{4,E}f_0(r)&=\chi_{[0,4/3)}(r)\sup_{\substack{t \in E \\ t\geq 3r/2}}r^{-1/2}\int_{t}^{t+r}(r+t-s)^{-1/2}|f_0(s)|ds.
	\end{align*}
\end{lem}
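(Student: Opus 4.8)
The plan is to read off the decomposition directly from the explicit formula (\ref{radialformula})--(\ref{kernel}) specialized to $d=2$. When $d=2$ the exponent $d-3=-1$, so
\[
K_t(r,s)=\frac{s}{\sqrt{[(r+t)^2-s^2]\,[s^2-(r-t)^2]}}
=\frac{s}{\sqrt{(r+t-s)(r+t+s)(s-|r-t|)(s+|r-t|)}},
\]
and hence, writing $r=|x|$,
\[
A_tf(x)=c_2\int_{|r-t|}^{r+t}\frac{s\,f_0(s)}{\sqrt{(r+t-s)(r+t+s)(s-|r-t|)(s+|r-t|)}}\,ds .
\]
For a fixed $x$ (hence fixed $r$) I would split the set over which the supremum defining $M_Ef(x)$ is taken into the three pieces $\{t\in E:\,t\le r/2\}$, $\{t\in E:\,t\ge 3r/2\}$ and $\{t\in E:\,r/2<t<3r/2\}$, and bound $M_Ef(x)$ by the sum of the three corresponding suprema. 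The point of this splitting is that in each regime the ``harmless'' factors $r+t+s$, $s+|r-t|$ and $s$ are comparable to a fixed power of $r$ (or to an absolute constant), so only the genuinely singular factors $(r+t-s)^{-1/2}$ and $(s-|r-t|)^{-1/2}$ need to be kept; splitting the $s$-interval $[|r-t|,r+t]$ at its midpoint then leaves only one of these two square roots truly singular in each half, the other being bounded by a constant (or, in one regime, by a negative power of $r$).

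In the regime $t\le r/2$ one has $r\ge 2t\ge 2$, $|r-t|=r-t\sim r$, $r+t\sim r$, and on $[r-t,r+t]$ (of length $2t\in[2,4]$) all of $s$, $r+t+s$, $s+|r-t|$ are $\sim r$, so $K_t(r,s)\sim[(r+t-s)(s-(r-t))]^{-1/2}$. Splitting at $s=r$ (the midpoint of $[r-t,r+t]$), one has $r+t-s\ge t\ge 1$ on the left half and $s-(r-t)\ge t\ge 1$ on the right half, which produces exactly $R_{1,E}f_0(r)+R_{2,E}f_0(r)$ together with the cutoff $\chi_{[2,\infty)}(r)$. In the regime $t\ge 3r/2$ one has $r\le 2t/3\le 4/3$, $|r-t|=t-r$, and on $[t-r,t+r]$ (of length $2r$) all of $s$, $r+t+s$, $s+|r-t|$ are $\sim t\sim 1$, so $K_t(r,s)\sim[(r+t-s)(s-(t-r))]^{-1/2}$; now on the left half $s\in[t-r,t]$ one has $r+t-s\ge r$, hence $(r+t-s)^{-1/2}\le r^{-1/2}$, and symmetrically on the right half, which yields $R_{3,E}f_0(r)+R_{4,E}f_0(r)$ with the factor $r^{-1/2}$ and the cutoff $\chi_{[0,4/3)}(r)$.

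The central regime $r/2<t<3r/2$ is where one must be most careful; here $t\in[1,2]$ forces $r\in(2/3,4)$, $r+t\sim 1$, while $|r-t|$ may be arbitrarily small. On $[|r-t|,r+t]$ one still has $r+t+s\sim 1$, and the key observation is that $s+|r-t|\sim s$ there (because $|r-t|\le s\le s+|r-t|\le 2s$), so $\sqrt{(r+t+s)(s+|r-t|)}\sim\sqrt{s}$ and therefore $K_t(r,s)\sim\sqrt{s}\,[(r+t-s)(s-|r-t|)]^{-1/2}$. Substituting $f_0(s)=g(s)s^{-1/p}$ turns $s\,f_0(s)\,s^{-1/2}$ into $s^{1/2-1/p}g(s)$; splitting $[|r-t|,r+t]$ at its midpoint (using $r+t-s\ge\min(r,t)\gtrsim 1$ near the left endpoint and $s-|r-t|\ge\min(r,t)\gtrsim 1$ near the right endpoint) and then enlarging each half back to the full interval gives precisely $\mathfrak{M}_{E,p}g(r)+\widetilde{\mathfrak{M}}_{E,p}g(r)$ with the cutoff $\chi_{(2/3,4)}(r)$. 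Collecting the three regimes gives (\ref{pointwise-2D}). I expect the main obstacle to be exactly the bookkeeping in this last case: one must simultaneously track which of the two reciprocal square roots is the singular one on each half-interval, verify the comparability $s+|r-t|\sim s$ that is responsible for the weight $s^{1/2-1/p}$, and confirm that the three $r$-cutoffs $\chi_{[2,\infty)}$, $\chi_{[0,4/3)}$, $\chi_{(2/3,4)}$ are precisely those forced by the constraint $t\in[1,2]$ in the respective regimes.
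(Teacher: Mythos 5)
Your derivation is correct: specializing the kernel \eqref{kernel} to $d=2$, splitting the supremum into $t\le r/2$, $t\ge 3r/2$, $r/2<t<3r/2$, noting the comparabilities of the harmless factors (in particular $s+|r-t|\sim s$ in the middle regime, which produces the weight $s^{1/2-1/p}$ after substituting $f_0(s)=g(s)s^{-1/p}$), and splitting the $s$-interval at its midpoint so that only one square-root singularity survives on each half is exactly the argument behind this lemma. The paper itself does not reprove it but cites \cite{1997SWW}, Lemma 5.1, and your proof coincides with that intended route, including the correct cutoffs $\chi_{[2,\infty)}$, $\chi_{[0,4/3)}$, $\chi_{(2/3,4)}$ forced by $t\in[1,2]$.
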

When presenting the above lemma, we closely follow the notation used in \cite{1997SWW}.  $ \mathfrak{M} $ and $ \widetilde{\mathfrak{M}}$ denote the main term while $ R_i$ denotes the remainder term, which is indeed the case when proving the $ L^p\to L^p $ estimates for circular maximal operator over $ E\subset(0,\infty) $. However, when it comes to  the $ L^p\to L^q $ maximal estimates over restricted dilation sets $ E\subset[1,2] $, the situation undergoes a subtle change. In some cases (see Proposition \ref{prop2D-R3} below), $ R_{3,E} $ and $ R_{4,E} $ may become the main term as well. Nevertheless, we continue to use the original notation to make it easier for readers to find the connection between our paper and \cite{1997SWW}.

We first prove $ L^p-$improving estimates for $ R_{1,E} $ and $R_{2,E}$.
\begin{prop}\label{prop2D-R1}
	Let $ d=2 $ and $ E\subset [1,2] $ with $ \dim_{\mathrm{M}}\!E=\be $. Then, for $ 1\leq p\leq q\leq\infty $ satisfying $ \frac{1}{p}-\frac{1-\be}{q}-\frac{1}{2}<0 $, we have 
	\[\|R_{i,E}f_0\|_{L^q(\mu_2)}\leq\|f_0\|_{L^p(\mu_2)} ,\]
	for $ i=1, 2 $.
\end{prop}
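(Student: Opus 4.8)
The approach is a dyadic decomposition in the distance to the singularity, followed by interpolation and summation of the dyadic pieces. I treat $R_{1,E}$ in detail; $R_{2,E}$ is handled identically, using the substitution $u=r+t-s$ in place of $u=s-(r-t)$, so that the $(-1/2)$-singularity again sits at an endpoint of a dyadic block. Substituting $u=s-(r-t)$ in $R_{1,E}$ and splitting the $u$-integral into the blocks $u\in(2^{-j-1},2^{-j}]$, on which $u^{-1/2}\lesssim 2^{j/2}$, gives the pointwise bound $R_{1,E}f_0(r)\le\sum_{j\ge -1}T_jf_0(r)$ with
\[
T_jf_0(r):=2^{j/2}\,\chi_{[2,\infty)}(r)\sup_{\substack{t\in E\\ 1\le t\le r/2}}\int_{r-t+2^{-j-1}}^{\,r-t+2^{-j}}|f_0(s)|\,ds.
\]
A key elementary observation, used throughout, is that the constraint $t\le r/2$ forces $s\in(r/2,r)$, so that $s\sim r\ge 2$ and Lebesgue measure on this range is comparable to $\mu_2=r\,dr$.

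Next I would establish three bounds for each (sublinear) piece $T_j$, all with respect to $\mu_2$. Since the inner integral runs over an interval of length $2^{-j-1}$, one has trivially $\|T_j\|_{L^\infty\to L^\infty}\lesssim 2^{-j/2}$. Dominating the inner integral by $\int_{r/2}^{r}|f_0(s)|\,ds\lesssim r^{-1}\|f_0\|_{L^1(\mu_2)}$ (using $s\sim r$) and then $r\ge 2$ gives $\|T_j\|_{L^1(\mu_2)\to L^\infty(\mu_2)}\lesssim 2^{j/2}$. For the $L^1(\mu_2)\to L^1(\mu_2)$ bound I would cover $E$ by $N(E,2^{-j})$ intervals $I_m$ of length $2^{-j}$ with centres $c_m$; for $t\in E\cap I_m$ the interval $[r-t+2^{-j-1},r-t+2^{-j}]$ lies in a fixed interval $J_m(r)$ of length $\lesssim 2^{-j}$ about $r-c_m$, so that
\[
T_jf_0(r)\lesssim 2^{j/2}\sum_{m=1}^{N(E,2^{-j})}\int_{J_m(r)\cap(r/2,\,r)}|f_0(s)|\,ds.
\]
Applying Fubini, and noting that for fixed $s$ the set $\{r:\ s\in J_m(r)\cap(r/2,r)\}$ is an interval of length $\lesssim 2^{-j}$ on which $r\le 2s$, one obtains $\int_{\{\cdot\}}r\,dr\lesssim 2^{-j}s$, hence $\|T_j\|_{L^1(\mu_2)\to L^1(\mu_2)}\lesssim 2^{-j/2}N(E,2^{-j})$.

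Interpolating these three estimates (linearising the supremum, then Riesz--Thorin) over the triangle with vertices $(1,1),(1,0),(0,0)$ in the $(\tfrac1p,\tfrac1q)$-plane — which is exactly the part $1/q\le 1/p$ of the proposition's hypothesis — yields, for $1\le p\le q\le\infty$,
\[
\|T_j\|_{L^p(\mu_2)\to L^q(\mu_2)}\lesssim N(E,2^{-j})^{1/q}\,2^{\,j\left(\frac1p-\frac1q-\frac12\right)}.
\]
Given $(\tfrac1p,\tfrac1q)$ with $\tfrac1p-\tfrac{1-\be}{q}-\tfrac12<0$, choose $\ep>0$ small (taking $\ep=0$ if $\be=1$) so that $\tfrac1p-\tfrac{1-\be-\ep}{q}-\tfrac12<0$ still holds; since $\dim_{\mathrm{M}}E=\be$ we have $N(E,2^{-j})\lesssim_\ep 2^{j(\be+\ep)}$, so the previous display is $\lesssim_\ep 2^{\,j\left(\frac1p-\frac12-\frac{1-\be-\ep}{q}\right)}$, a geometric series summable over $j\ge -1$. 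Summing $R_{1,E}f_0\le\sum_j T_jf_0$ then gives $\|R_{1,E}f_0\|_{L^q(\mu_2)}\lesssim\|f_0\|_{L^p(\mu_2)}$, and likewise for $R_{2,E}$.

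The main obstacle is the $L^1(\mu_2)\to L^1(\mu_2)$ bound: one must carefully match the weight $\mu_2=r\,dr$ with the covering number in the Fubini step — verifying that each $s$ is reached by an $r$-set of $\mu_2$-mass only $\lesssim 2^{-j}s$, and that summing over the $N(E,2^{-j})$ cover intervals produces precisely the factor $N(E,2^{-j})^{1/q}$ that, after interpolation and letting $\ep\to0$, reconstitutes the $\tfrac{1-\be}{q}$ appearing in the hypothesis. The remaining ingredients — the pointwise decomposition, the two $L^\infty$-target bounds, and the interpolation and summation — are routine.
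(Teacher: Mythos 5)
Your proposal is correct and takes essentially the same route as the paper's proof: dyadic decomposition in the distance to the singular endpoint, an $L^1(\mu_2)\to L^1(\mu_2)$ bound obtained by covering $E$ with intervals of length $2^{-j}$ and applying Fubini (which produces the factor $N(E,2^{-j})$), interpolation with the two $L^\infty$-target estimates, and geometric summation using $N(E,2^{-j})\lesssim_\ep 2^{j(\be+\ep)}$ under the hypothesis $\tfrac1p-\tfrac{1-\be}{q}-\tfrac12<0$. The only (harmless) difference is that the paper also decomposes dyadically in $r\in[2,\infty)$ and sums those pieces via the embedding $\ell^p\hookrightarrow\ell^q$ with localized norms $\|f_0\|_{L^p(\tilde I_k;\mu_2)}$, whereas you prove the three endpoint bounds globally in $r$ using $s\sim r\ge 2$, which works equally well.
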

\begin{proof}
	We prove the estimates for $ R_{1,E} $ only, since the  proof  for $ R_{2,E} $  is similar. For $ k\geq 1 $ and $ m\geq 0 $, we define
	\[R_{1,E}^{k,m}f_0(r)=2^{-k+\tfrac{m}{2}}\chi_{I_k}(r)\sup_{\substack{t \in E \\ t\leq r/2}}\int_{r-t+2^{-m-1}t}^{r-t+2^{-m}t}|f_0(s)|sds\]
	Then it is easy to see that 
	\begin{equation}\label{ineq-R1sum}
		\|R_{1,E}f_0\|_{L^q(\mu_2)}\leq \big(\sum_{k\geq 1}\|\sum_{m\geq 0}R_{1,E}^{k,m}f_0\|^q_{L^q(\mu_2)}\big)^{1/q}.
	\end{equation}
	We claim that, for $ 1\leq p\leq q\leq\infty $,
	\begin{equation}\label{R1-Lpq}
		\|R_{1,E}^{k,m}f_0\|_{L^q(\mu_2)}\lesssim 2^{-k(\tfrac{1}{p}-\tfrac{1}{q})}2^{m(\tfrac{1}{p}-\tfrac{1}{q}-\tfrac{1}{2})}N(E,2^{-m})^{\tfrac{1}{q}}\|f_0\|_{L^p(\tilde{I}_k;\mu_2)}.
	\end{equation}
This inequality follows from interpolation between $ (p,q)=(\infty, \infty), (1,\infty)$ and $ (1,1) $. The $ L^1\to L^\infty $ and $ L^\infty\to L^\infty $ estimates are straightforward, so we omit the details. For the $ L^1\to L^1 $ estimate, we cover $ E $ with intervals  $ I_\nu^m $ of length $ 2^{-m} $. Note that the number of these intervals are comparable to $ N(E,2^{-m}) $. We denote the double interval of $ I_\nu^m $ by $ \tilde{I}_\nu^m$. Then, by Fubini's theorem, we obtain that
\begin{align*}
	\int R_{1,E}^{k,m}f_0(r)rdr&\lesssim 2^{m/2}\sum_\nu\int_{2^k}^{2^{k+1}}\int_{r-\tilde{I}_\nu^m}|f_0(s)|sdsdr\\
	                           &\leq 2^{m/2}\sum_\nu\int_{\tilde{I}_k}\big(\int_{s+\tilde{I}_\nu^m}dr\big)|f_0(s)|sds\\
	                           &\lesssim 2^{-m/2}N(E,2^{-m})\|f_0\|_{L^1(\tilde{I}_k;\mu_2)}.
\end{align*}
Notice that $ N(E,2^{-m})\lesssim_{\ep} 2^{m(\be+\ep)}$ for any $ \ep>0$. Then, if  $ \frac{1}{p}-\frac{1-\be}{q}-\frac{1}{2}<0 $, we have
\[\|\sum_{m\geq 0}R_{1,E}^{k,m}f_0\|_{L^q(\mu_2)}\lesssim 2^{-k(\tfrac{1}{p}-\tfrac{1}{q})}\|f_0\|_{L^p(\tilde{I}_k;\mu_2)},\]
by direct summation and (\ref{R1-Lpq}). Using the natural embedding $ l^{p}\hookrightarrow l^{q} $ for $ p\leq q $ and (\ref{ineq-R1sum}), we have
\begin{align*}
	\|R_{1,E}f_0\|_{L^q(\mu_2)}&\leq \big(\sum_{k\geq 1}\|\sum_{m\geq 0}R_{1,E}^{k,m}f_0\|^p_{L^q(\mu_2)}\big)^{1/p}\\
	                          &\lesssim \big(\sum_{k\geq 1}\|f_0\|^p_{L^p(\tilde{I}_k;\mu_2)}\big)^{1/p}\\
	                          &\lesssim \|f_0\|_{L^p(\mu_2)},
\end{align*}
as long as $ 1\leq p\leq q\leq\infty $ and $ \frac{1}{p}-\frac{1-\be}{q}-\frac{1}{2}<0 $.
\end{proof}
With the help of the above proposition, we can easily get the following corollary when the Minkowski dimension of $ E $ is strictly less than 1.
\begin{cor}\label{cor2D-R1}
	Suppose that $\dim_{\mathrm{M}}\!E=\be<1$. Then, for $ i=1, 2 $, $ R_{i,E} $ is bounded from $L^p(\mu_2) $ to $ L^q(\mu_2) $ if $ (1/p, 1/q)\in \De(\be) $.
\end{cor}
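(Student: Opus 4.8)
The plan is to read this off directly from Proposition \ref{prop2D-R1}. With the hypothesis $\dim_{\mathrm{M}}E=\be$ in force, that proposition already asserts that, for $i=1,2$, the operator $R_{i,E}$ is bounded $L^p(\mu_2)\to L^q(\mu_2)$ whenever $1\le p\le q\le\infty$ and $\tfrac1p-\tfrac{1-\be}{q}-\tfrac12<0$. Hence the only thing left to do is the elementary geometric check that the closed triangle $\De(\be)$ is contained in the set of exponent pairs $(1/p,1/q)$ satisfying both of these constraints.

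To carry this out I would put $x=1/p$, $y=1/q$, so that $1\le p\le q\le\infty$ becomes the affine system $0\le y\le x\le 1$ and the remaining constraint becomes $\phi(x,y):=x-(1-\be)y-\tfrac12<0$. All of these being affine in $(x,y)$, it suffices to verify them at the three vertices of $\De(\be)$ and then invoke convexity. At $O=(0,0)$ one has $0\le 0\le 0\le 1$ and $\phi(O)=-\tfrac12<0$. At $Q_1(\be)=\big(\tfrac1{1+\be},\tfrac1{1+\be}\big)$ one has $0\le\tfrac1{1+\be}\le\tfrac1{1+\be}\le1$ and $\phi(Q_1(\be))=\tfrac1{1+\be}-\tfrac{1-\be}{1+\be}-\tfrac12=\tfrac{\be}{1+\be}-\tfrac12=\tfrac{\be-1}{2(1+\be)}<0$, using $\be<1$. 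At $Q_2(\be)=\big(\tfrac2{3+\be},\tfrac1{3+\be}\big)$ one has $0\le\tfrac1{3+\be}\le\tfrac2{3+\be}\le1$ and $\phi(Q_2(\be))=\tfrac2{3+\be}-\tfrac{1-\be}{3+\be}-\tfrac12=\tfrac{1+\be}{3+\be}-\tfrac12=\tfrac{\be-1}{2(3+\be)}<0$, again using $\be<1$. By affineness all of these inequalities then persist throughout $\De(\be)$, so Proposition \ref{prop2D-R1} applies at every point of $\De(\be)$ and the corollary follows.

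The main obstacle is essentially nonexistent: all the analytic work lives in Proposition \ref{prop2D-R1}, and here only the bookkeeping remains. The one point worth emphasizing is that $Q_1(\be)$ and $Q_2(\be)$ are exactly the vertices at which $\phi$ vanishes when $\be=1$ — there $\phi(x,y)=x-\tfrac12$ and $\{\phi=0\}$ is the vertical line through the segment $[Q_1(1),Q_2(1)]$ — so the strict hypothesis $\be<1$ is precisely what makes $\phi$ strictly negative at those vertices and therefore lets us retain the entire closed triangle, including the edge $[Q_1(\be),Q_2(\be)]$, with no endpoint loss.
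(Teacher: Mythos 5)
Your proof is correct and takes essentially the same route as the paper: the corollary is presented there as an immediate consequence of Proposition \ref{prop2D-R1}, and your affine vertex check at $O$, $Q_1(\beta)$, $Q_2(\beta)$ (with strictness coming exactly from $\beta<1$) is the elementary verification the paper leaves implicit.
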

Then we turn to the operators $\mathfrak{M}_{E,p}$ and $\widetilde{\mathfrak{M}_{E,p}} $, which require a more refined decomposition.
\begin{prop}\label{prop2D-M}
	Let $ d=2 $ and $  E\subset [1,2] $.  Suppose that $ 0\leq \be<1 $ and 
	\begin{equation*}
	\sup_{0<\de<1}\chi^E_{\mathrm{M},\beta}(\delta)<\infty.
	\end{equation*}
	Then
	\[\|\mathfrak{M}_{E,p_0}\|_{L^{p_0}(\R^+)\to L^{p_0}(\mu_2)}+\|\widetilde{\mathfrak{M}}_{E,p_0}\|_{L^{p_0}(\R^+)\to L^{p_0}(\mu_2)}<\infty,\]
	and
	\[\|\mathfrak{M}_{E,p_1}\|_{L^{p_1}(\R^+)\to L^{2p_1}(\mu_2)}+\|\widetilde{\mathfrak{M}}_{E,p_1}\|_{L^{p_1}(\R^+)\to L^{2p_1}(\mu_2)}<\infty,\]
	where $ p_0=1+\be $ and $ p_1=1+\tfrac{p_0}{2}$.
\end{prop}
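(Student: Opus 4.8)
The proof follows the scheme of \cite[Proposition 5.4]{1997SWW}, substantially streamlined here by the localization $E\subset[1,2]$ and by the fact that only the two exponent pairs $Q_1(\beta)=(1/p_0,1/p_0)$ and $Q_2(\beta)=(1/p_1,1/(2p_1))$ are needed. The two singular factors of the kernel play symmetric roles, so I would concentrate on $\mathfrak{M}_{E,p}$; the operator $\widetilde{\mathfrak{M}}_{E,p}$, whose singular factor $(r+t-s)^{-1/2}$ sits at $s=r+t\sim 1$ rather than near the origin, is strictly easier and is handled by the same decomposition (for $s\le(r+t)/2$ one has $r+t-s\sim 1$ and $s^{1/2-1/p}\lesssim s^{-1/p}$, reducing it to the non-singular part below, while near $s=r+t$ the singularity acts only at unit scale and accumulates over no dyadic scales). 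First I would write $[2/3,4]=\overline E\cup\bigcup_n D_n$; since $\beta<1$ one has $|\overline E|=0$, so it suffices to bound $\mathfrak{M}_{E,p}g$ on each $D_n$, the finitely many pieces with $\text{dist}(r,E)\gtrsim 1$ being covered by the argument below with $N(E,\cdot)\lesssim 1$. On $D_n$ every admissible $t$ satisfies $2^{-n}<|r-t|\le r/2\lesssim 1$, and I split $\int_{|r-t|}^{r+t}=\int_{|r-t|}^{2|r-t|}+\int_{2|r-t|}^{r+t}$, decomposing dyadically $|r-t|\sim 2^{-j}$ with $n_0\le j\le n$.

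On the non-singular piece $s\ge 2|r-t|$ one has $s-|r-t|\sim s$, so the kernel is $\sim s^{-1/p}$, and since $|r-t|>2^{-n}$ this piece is bounded by $\int_{2^{-n}}^{C}s^{-1/p}|g(s)|\,ds$; a dyadic decomposition in $s$ followed by H\"older on each block reduces it to a sum of exactly the type treated in the proof of Proposition \ref{prop-beta}. Combining with $|D_n|\lesssim 2^{-n(1-\beta)}$ (Lemma \ref{lem-equiv}) and Young's inequality for convolution of sequences — using the embedding $\ell^{p_1}\hookrightarrow\ell^{2p_1}$ for the off-diagonal estimate — gives the two required bounds for this piece, and it is here that the exponents are forced: one needs $2-p_0=1-\beta$, i.e. $p_0=1+\beta$, and then $p_1=1+p_0/2$.

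The singular piece $s\in[|r-t|,2|r-t|]$ is the heart of the matter. There $s\sim|r-t|\sim 2^{-j}$, so $s^{1/2-1/p}\sim 2^{j(1/p-1/2)}$ and the piece equals $\sim 2^{j(1/p-1/2)}\sup_t\int_{|r-t|}^{2|r-t|}(s-|r-t|)^{-1/2}|g(s)|\,ds$. Following \cite[Proposition 5.4]{1997SWW} I would perform a further dyadic decomposition of $s-|r-t|$ and, crucially, a matching decomposition of the ranges of $r$ and of $E$ arranged so that on each resulting piece $|r-t|$ lies in a single binary interval of length $2^{-j}$ (recall $\tilde N(E,2^{-j})\sim N(E,2^{-j})$ by Lemma \ref{lem-binary}). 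On such a piece I would interpolate between the trivial $L^\infty\to L^\infty$ bound, finite since $\int_{|r-t|}^{2|r-t|}(s-|r-t|)^{-1/2}\,ds=2|r-t|^{1/2}$, and an $L^1$-type bound into which the covering number $N(E,2^{-j})$ enters; the latter comes from Fubini's theorem together with the rearrangement identity $\int_A^{A+h}(s-A)^{-1/2}\,ds=2\sqrt h$, the same device underlying Lemma \ref{lem-tech}. Summing the ensuing geometric series over the three scale parameters (the size of $s-|r-t|$, the size $2^{-j}$ of $|r-t|$, and the index $n$ measuring $\text{dist}(r,E)$), and using $\sup_{0<\delta<1}\chi^E_{\mathrm{M},\beta}(\delta)<\infty$ to write $N(E,2^{-j})\le C\,2^{j\beta}$, closes the argument; the hypothesis on $\chi^E_{\mathrm{M},\beta}$, stronger than $\dim_{\mathrm M}E=\beta$, is used precisely because these sums sit at the borderline of convergence.

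I expect this last step to be the main obstacle: extracting the gain from the singular integral $(s-|r-t|)^{-1/2}$ while keeping the sharp dependence on the covering numbers of $E$ requires the combinatorially delicate simultaneous decomposition of $r$ and $E$ that renders $|r-t|$ dyadically localized — this is where the two-dimensional argument departs most from the higher-dimensional one of Proposition \ref{prop-beta}, whose kernel is essentially non-singular — and one must then verify that the resulting multi-parameter geometric sum converges exactly at $p_0=1+\beta$ and $p_1=(3+\beta)/2$.
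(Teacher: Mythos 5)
Your skeleton is in fact the paper's: the same splitting of $\mathfrak{M}_{E,p}$ at $s=2|r-t|$, the non-singular part reduced to the argument of Proposition \ref{prop-beta}, the singular part attacked through the decomposition scheme of \cite[Proposition 5.4]{1997SWW}, and $\widetilde{\mathfrak{M}}_{E,p}$ dismissed as analogous. The genuine gap is in the summation mechanism you propose for the singular piece. Localize to $r\in D_n$, $|r-t|\sim 2^{-j}$ and $s-|r-t|\sim 2^{-j-m}$. First, the Fubini $L^1\to L^1$ bound carries the covering number at the \emph{finer} scale, $N(E,2^{-j-m})$, not $N(E,2^{-j})$ as you wrote. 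Second, and more seriously, interpolating each such piece between the $(\infty,\infty)$, $(1,\infty)$ and $(1,1)$ bounds (as in \eqref{R1-Lpq} or \eqref{keyestimates for R3}) gives an operator norm
\[
\lesssim N(E,2^{-j-m})^{\frac1q}\,2^{j\left(\frac2p-\frac1q-1\right)}\,2^{m\left(\frac1p-\frac1q-\frac12\right)}
\lesssim 2^{j\left(\frac2p-\frac{1-\beta}{q}-1\right)}\,2^{m\left(\frac1p-\frac{1-\beta}{q}-\frac12\right)},
\]
and the exponent $\frac2p-\frac{1-\beta}{q}-1$ is precisely the linear functional from the necessary condition of Section \ref{stand3}, which vanishes identically on the segment $[Q_1(\beta),Q_2(\beta)]$ — in particular at both $(\tfrac1{p_0},\tfrac1{p_0})$ and $(\tfrac1{p_1},\tfrac1{2p_1})$. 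So the sum over the $\sim n$ admissible scales $j$ on $D_n$ is not a geometric series: it loses a factor $n$, and since these operator norms have no decay in $n$, assembling $\sum_n\|\cdot\|^q_{L^q(D_n)}$ with the global $\|g\|_{L^p}$ on the right diverges. ``Summing the ensuing geometric series over the three scale parameters'' therefore cannot close the argument at $p_0=1+\beta$, $p_1=(3+\beta)/2$; acknowledging that the sums are ``at the borderline of convergence'' identifies the difficulty but does not supply a proof.

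What closes it — and what the paper does, following \cite{1997SWW} — is to keep the norm of $g$ localized: the piece at scale $j=n-l$ only sees $\|g\|_{L^{p}(\tilde{I}_{-n+l})}$, and one proves a bound on $\tilde{D}_n$ of the form $\sum_{l\ge0}2^{-lc}\|g\|_{L^{p_1}(\tilde{I}_{-n+l})}$ with $c=\tfrac1{p_1}-\tfrac12>0$. The decay in $l=n-j$ is produced by playing $|\tilde{D}_n|\lesssim 2^{-n(1-\beta)}$ against the crude $L^\infty(\tilde{D}_n)$ estimate \eqref{crude} in the regime $m<l$, and by H\"older interpolation between \eqref{crude} and the finer $L^{p_1}(\tilde{D}_n)$ estimate \eqref{finer} of \cite[(5.11)]{1997SWW} (which carries $N(E,2^{-n+l-m-8})^{1/p_1}$) in the regime $m\ge l$; the sum over $n$ is then an $\ell^1\ast\ell^{p_1}$ convolution handled by Young's inequality together with the embedding $\ell^{p_1}\hookrightarrow\ell^{2p_1}$ for the off-diagonal point. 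You invoke exactly these ingredients for the non-singular piece, but they are equally indispensable for the singular one, and your sketch replaces them with a geometric summation that is unavailable at the claimed exponents.
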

\begin{proof}
	We only consider $ \mathfrak{M}_{E,p_i}$ with $ i=0,1 $ since the proof for $ \widetilde{\mathfrak{M}}_{E,p_i} $ is similar. Define
	\[\mathfrak{M}_{E,p}^0g(r)=\chi_{(2/3,4)}(r)\sup_{\substack{t \in E \\ r/2<t<3r/2}}\int_{|r-t|}^{2|r-t|}s^{1/2-1/p}(s-|r-t|)^{-1/2}|g(s)|ds,\]
	and
	\[\mathfrak{M}_{E,p}^1g(r)=\chi_{(2/3,4)}(r)\sup_{\substack{t \in E \\ r/2<t<3r/2}}\int_{2|r-t|}^{r+t}s^{-1/p}|g(s)|ds.\]
	Obviously, we have
	\[\mathfrak{M}_{E,p}g(r)\lesssim \mathfrak{M}_{E,p}^0g(r)+\mathfrak{M}_{E,p}^1g(r).\]
	By Proposition \ref{prop-beta}, we obtain the desired estimates 
	\[\|\mathfrak{M}_{E,p_0}^1\|_{L^{p_0}(\R^+)\to L^{p_0}(\mu_2)}+\|\mathfrak{M}_{E,p_1}^1\|_{L^{p_1}(\R^+)\to L^{2p_1}(\mu_2)}\leq \infty.\]
	So it remains to handle $ \mathfrak{M}_{E,p}^0 $. For simplicity, we only show $L^{p_1}(\R^+)\to L^{2p_1}(\mu_2) $ estimate for the operator $ \mathfrak{M}_{E,p_1}^0 $ since
	the corresponding bound for $\mathfrak{M}_{E,p_0}^0 $ is very similar.  Actually, a stronger conclusion for $L^{p_0}(\R^+)\to L^{p_0}(\mu_2) $ estimate is included in \cite[Proposition 5.4]{1997SWW}.
	
	Define  $\tilde{D}_n=\{r\geq 0\,:\, 2^{-n}<\text{dist}(r, E)< 2^{-n+1}\}$. Then we can easily get
	\begin{equation}\label{newDn-size}
		|D_n|=|\tilde{D}_n| \lesssim 2^{n(1-\be)}=2^{2p_1n(1-\tfrac{2}{p_1})},
	\end{equation} 
	since $ |\bar{E}|=0 $. Note that the open set $ \tilde{D}_n $ can be written as a union of intervals $ I_{n,\nu} $. We divide $ E $ into 
	\[E_{n,l,\nu}:=\{t\in E\,:\, 2^{-n+l}\leq \text{dist}(t, I_{n,\nu})\leq 2^{-n+l+1}\},\]
	for $ 0\leq l\leq n$. By H\"{o}lder inequality, we obtain 
	\[\mathfrak{M}_{E,p_1}^0g(r)\lesssim \sum_{l\geq 0}\sum_{m\geq 0}\mathfrak{M}_{E,p_1}^{l,m}g(r),\]
	where
	\[\mathfrak{M}_{E,p_1}^{l,m}g(r)=\sum_{n\geq l}\sum_{\nu}\chi_{I_{n,\nu}}(r)2^{(2n-2l+m)(\tfrac{1}{p_1}-\tfrac{1}{2})}\sup_{\substack{t \in E_{n,l,\nu} }}\Big(\int_{|r-t|+2^{-n+l-m}}^{|r-t|+2^{-n+l-m+1}}|g(s)|^{p_1}ds\Big)^{1/p_1}.\]
	It suffices to show that 
	\begin{equation}\label{est-m-small}
		\big(\sum_{n\geq 0}\|\sum_{l\geq 0}\sum_{m<l}\mathfrak{M}_{E,p_1}^{l,m}g\|_{L^{2p_1}(\tilde{D}_n;\mu_2)}\big)^{\tfrac{1}{2p_1}}\lesssim \|g\|_{L^{p_1}(\R^+)},
	\end{equation}
	and
	\begin{equation}\label{est-m-large}
		\big(\sum_{n\geq 0}\|\sum_{l\geq 0}\sum_{m\geq l}\mathfrak{M}_{E,p_1}^{l,m}g\|_{L^{2p_1}(\tilde{D}_n;\mu_2)}\big)^{\tfrac{1}{2p_1}}\lesssim \|g\|_{L^{p_1}(\R^+)}.
	\end{equation}
	To get (\ref{est-m-small}), we use the crude estimate
	\begin{equation}\label{crude}
		\|\mathfrak{M}_{E,p_1}^{l,m}g\|_{L^\infty(\tilde{D}_n;\mu_2)}\lesssim2^{(2n-2l+m)(\tfrac{1}{p_1}-\tfrac{1}{2})}\|g\|_{L^{p_1}(\tilde{I}_{-n+l})}.
	\end{equation}
	Consequently, by Minkowski inequality, (\ref{crude}) and (\ref{newDn-size}), we have
	\begin{align*}
		\|\sum_{l\geq 0}\sum_{m< l}\mathfrak{M}_{E,p_1}^{l,m}g\|_{L^{2p_1}(\tilde{D}_n;\mu_2)}&\lesssim \sum_{l\geq 0}\sum_{m< l}\| \mathfrak{M}_{E,p_1}^{l,m}g\|_{L^{2p_1}(\tilde{D}_n;\mu_2)}\\
		&\lesssim \sum_{l\geq 0}|\tilde{D}_n|^{\tfrac{1}{2p_1}}2^{(2n-2l)(\tfrac{1}{p_1}-\tfrac{1}{2})}\|g\|_{L^{p_1}(\tilde{I}_{-n+l})}\sum_{m< l}2^{m(\tfrac{1}{p_1}-\tfrac{1}{2})}\\
		&\lesssim \sum_{l\geq 0}2^{-l(\tfrac{1}{p_1}-\tfrac{1}{2})}\|g\|_{L^{p_1}(\tilde{I}_{-n+l})}.
	\end{align*}
	Notice that $ p_1<2 $ since $ \be<1 $. As in the proof of Proposition \ref{prop-beta}, based on the embedding $ l^{p_1}\hookrightarrow l^{2p_1} $ and Young's inequality for convolution of sequences, we obtain the required estimate (\ref{est-m-small}). For (\ref{est-m-large}), we need the finer estimate
	\begin{equation}\label{finer}
		\|\mathfrak{M}_{E,p_1}^{l,m}g\|_{L^{p_1}(\tilde{D}_n;\mu_2)}\lesssim 2^{\tfrac{l-n}{p_1'}}2^{-\tfrac{m}{2}}[N(E,2^{-n+l-m-8})]^{1/p_1}\|g\|_{L^{p_1}(\tilde{I}_{-n+l})},
	\end{equation}
	which had already appeared in \cite[(5.11)]{1997SWW}. Combining (\ref{crude}) and (\ref{finer}) with H\"{o}lder inequality, we get
	\[\|\mathfrak{M}_{E,p_1}^{l,m}g\|_{L^{2p_1}(\tilde{D}_n;\mu_2)}\lesssim  2^{-m(\tfrac{1}{p_1}-\tfrac{1}{2})}\|g\|_{L^{p_1}(\tilde{I}_{-n+l})}.\]
	Thus, 
	\[\|\sum_{l\geq 0}\sum_{m\geq l}\mathfrak{M}_{E,p_1}^{l,m}g\|_{L^{2p_1}(\tilde{D}_n;\mu_2)}\lesssim \sum_{l\geq 0}2^{-l(\tfrac{1}{p_1}-\tfrac{1}{2})}\|g\|_{L^{p_1}(\tilde{I}_{-n+l})}.\]
	We proceed analogously to get (\ref{est-m-large}).
\end{proof}
The following corollary is a direct consequence if  we replace $ \be $ with $ \be+\ep $ in the assumption about the Minkowski characteristic of $ E $ for any $0<\ep<1-\be$. 
\begin{cor}\label{cor2D-M}
		Let $ d=2 $ and $  E\subset [1,2] $ with $ \dim_{\mathrm{M}}\!E=\be<1$. Then, for $ (1/p,1/q) \in \De(\be)\setminus [Q_1(\be), Q_2(\be)]$, we have
		\[\|\mathfrak{M}_{E,p}g\|_{L^q(\mu_2)}+\|\widetilde{\mathfrak{M}}_{E,p}g\|_{L^q(\mu_2)}\lesssim \|g\|_{L^p(\R^+)}.\]
\end{cor}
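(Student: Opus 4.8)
The plan is to obtain Corollary~\ref{cor2D-M} from Proposition~\ref{prop2D-M} by enlarging the Minkowski exponent by $\epsilon$ and then interpolating. First, since $\dim_{\mathrm{M}}\!E=\beta$, for every $\epsilon\in(0,1-\beta)$ one has $N(E,\delta)\lesssim_\epsilon\delta^{-(\beta+\epsilon)}$, i.e. $\sup_{0<\delta<1}\chi^E_{\mathrm{M},\beta+\epsilon}(\delta)<\infty$. Applying Proposition~\ref{prop2D-M} with $\beta+\epsilon$ in place of $\beta$ then gives, for both $\mathfrak{M}$ and $\widetilde{\mathfrak{M}}$, the bounds $L^{p_0}(\R^+)\to L^{p_0}(\mu_2)$ with $p_0=1+\beta+\epsilon$ and $L^{p_1}(\R^+)\to L^{2p_1}(\mu_2)$ with $p_1=1+\tfrac{p_0}{2}=\tfrac{3+\beta+\epsilon}{2}$; these are exactly the endpoint estimates at the two vertices $Q_1(\beta+\epsilon)$ and $Q_2(\beta+\epsilon)$ of the triangle $\Delta(\beta+\epsilon)$.

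Next I would record the trivial estimate at the vertex $O$. On the support $\chi_{(2/3,4)}(r)$ together with $r/2<t<3r/2$ and $t\in[1,2]$ one has $|r-t|<2$ and $r+t<6$, and the integrals $\int_a^b s^{1/2}(s-a)^{-1/2}\,ds$ and $\int_a^b s^{1/2}(b-s)^{-1/2}\,ds$ are bounded uniformly for $0\le a<b\le 6$; hence $\mathfrak{M}_{E,\infty}$ and $\widetilde{\mathfrak{M}}_{E,\infty}$ map $L^\infty(\R^+)$ into $L^\infty(\mu_2)$ (indeed into $L^q(\mu_2)$ for every $q$, the range being supported in $(2/3,4)$). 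The one place an idea is needed is that the three estimates just collected concern the \emph{different} operators $\mathfrak{M}_{E,p}$ as $p$ varies, so ordinary Riesz--Thorin does not apply directly. I would resolve this by factoring out the $p$-dependence: writing $\mathfrak{M}_{E,p}g=\mathcal{S}\big(s^{1/2-1/p}|g|\big)$ with the $p$-independent positive operator
\[\mathcal{S}h(r)=\chi_{(2/3,4)}(r)\sup_{\substack{t\in E\\ r/2<t<3r/2}}\int_{|r-t|}^{r+t}(s-|r-t|)^{-1/2}h(s)\,ds\]
(and likewise $\widetilde{\mathcal{S}}$ with $(r+t-s)^{-1/2}$), the correspondence $h=s^{1/2-1/p}g$ (for $g\ge0$, which we may assume) is norm-preserving, $\|g\|_{L^p(\R^+)}=\|h\|_{L^p(s^{1-p/2}\,ds)}$. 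Thus the three bounds become $\mathcal{S}:L^p(s^{1-p/2}\,ds)\to L^q(\mu_2)$ at $(p,q)=(\infty,\infty),(p_0,p_0),(p_1,2p_1)$, and since $\big(s^{1-p/2}\big)^{1/p}=s^{1/p-1/2}$ is multiplicatively interpolated correctly, the family $\{L^p(s^{1-p/2}\,ds)\}$ forms a Stein--Weiss scale; two applications of the Stein--Weiss interpolation theorem (first between $Q_1(\beta+\epsilon)$ and $Q_2(\beta+\epsilon)$, then between $O$ and each point of the resulting edge) give $\|\mathfrak{M}_{E,p}g\|_{L^q(\mu_2)}+\|\widetilde{\mathfrak{M}}_{E,p}g\|_{L^q(\mu_2)}\lesssim\|g\|_{L^p(\R^+)}$ for all $(1/p,1/q)\in\Delta(\beta+\epsilon)$.

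Finally I would let $\epsilon\downarrow0$. Since $Q_1(\beta+\epsilon)$ moves toward $Q_1(\beta)$ along the diagonal $\{1/p=1/q\}$ and $Q_2(\beta+\epsilon)$ toward $Q_2(\beta)$ along the ray $\{1/p=2/q\}$, the triangles $\Delta(\beta+\epsilon)$ are nested and increase, as $\epsilon\downarrow0$, to $\Delta(\beta)\setminus[Q_1(\beta),Q_2(\beta)]$: every interior point of $\Delta(\beta)$ and every point of $[O,Q_1(\beta))\cup[O,Q_2(\beta))$ lies in $\Delta(\beta+\epsilon)$ for $\epsilon$ small, while no point of the far edge $[Q_1(\beta),Q_2(\beta)]$ lies in any $\Delta(\beta+\epsilon)$. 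Taking the union over $\epsilon\in(0,1-\beta)$ of the estimates from the previous step yields the corollary. The only genuinely non-routine step is the interpolation across the $p$-dependent family; once it has been reduced, as above, to interpolation of the single operator $\mathcal{S}$ (and $\widetilde{\mathcal{S}}$) along the weighted scale $L^p(s^{1-p/2}\,ds)$, everything else is elementary bookkeeping.
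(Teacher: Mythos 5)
Your proposal is correct, and at the top level it is the same strategy the paper gestures at: bump the Minkowski exponent to $\beta+\epsilon$, invoke Proposition \ref{prop2D-M}, and take the union over $\epsilon$, using that $\bigcup_{0<\epsilon<1-\beta}\De(\beta+\epsilon)=\De(\beta)\setminus[Q_1(\beta),Q_2(\beta)]$ (your verification of this geometric fact is accurate). The paper's proof of Corollary \ref{cor2D-M} is literally that one sentence, and it leaves implicit precisely the point you isolate: Proposition \ref{prop2D-M} only gives the two vertex pairs $(p_0,p_0)$ and $(p_1,2p_1)$, and the operators $\mathfrak{M}_{E,p}$ change with $p$, so one cannot quote Riesz--Thorin for a single operator. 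Your fix --- writing $\mathfrak{M}_{E,p}g=\mathcal{S}(s^{1/2-1/p}|g|)$, checking $\|g\|_{L^p(ds)}=\|h\|_{L^p(s^{1-p/2}ds)}$, and observing that the multiplier weights $s^{1/p-1/2}$ interpolate multiplicatively along the scale (so the Stein--Weiss relation reproduces exactly the weight belonging to the interpolated exponent) --- is a clean and valid way to supply the missing step, and the trivial vertex bound at $O$ is computed correctly. Two small points would make the interpolation airtight: $\mathcal{S}$ and $\widetilde{\mathcal{S}}$ are suprema of positive linear operators, not linear, so before applying Stein--Weiss you should linearize with a measurable selection $t(\cdot)\in E$ (each linearization is dominated pointwise by the maximal operator since the kernels are nonnegative, so the three endpoint bounds hold uniformly in the selection, and lower semicontinuity in $t$ lets you recover the full supremum from a countable dense subset); and at the vertex $O$ the weighted $L^\infty$ member of the scale must be taken in the multiplier form $\|\mathcal{S}h\|_{L^\infty}\lesssim\|s^{-1/2}h\|_{L^\infty}$, which is exactly what your estimate of $\int_a^b s^{1/2}(s-a)^{-1/2}\,ds$ provides. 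An alternative reading of the paper's ``direct consequence'' is that the proof of Proposition \ref{prop2D-M} (and of Proposition \ref{prop-beta}) can be rerun at general exponents strictly below the line through $Q_1,Q_2$, the specific pairs entering only through the matching of $|D_n|\lesssim 2^{-n(1-\beta)}$ with the dyadic scale factors; your interpolation route reaches the same conclusion without reopening those proofs.
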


To establish the endpoint result, we also need to consider the operators $R_{3,E} $ and $R_{4,E} $. Surprisingly, the region of $ (1/p,1/q) $ such that these two operators  are $ L^p\to L^q $ bounded depends on the Assouad dimension. 

Before stating the proposition, we introduce some additional notations. Let $\tilde{Q}_3(\be)=(\frac{2-\be}{2},\frac{1}{2})$ and $\tilde{\mathcal{Q}}(\be, \ga) $ denote the closed quadrilateral formed by vertices $ O$, $Q_1(\be)$, $\tilde{Q}_3(\be)$, $Q_2(2\ga-1)$.  Note that the quadrilateral $ \tilde{\mathcal{Q}}(\be, \ga)$   degenerates into a triangle  when $\be=\ga=0$.

\begin{prop}\label{prop2D-R3}
		Let $ d=2 $ and $ E\subset [1,2] $, and $ 0\leq \be\leq \ga\leq 1 $. Suppose that $ \sup_{0<\de<1}\chi^E_{\mathrm{M},\beta}(\delta)<\infty $ and  $\sup_{0<\de<1}\chi^E_{\mathrm{A},\ga}(\delta)<\infty $.
	
	(i) If $0<\be<1 $, then   $ R_{i,E} $ is bounded from $L^p(\mu_2) $ to $L^q(\mu_2)$ for $(1/p,1/q)\in \tilde{\mathcal{Q}}(\be, \ga)\setminus\{Q_2(2\ga-1),\tilde{Q}_3(\be)\}$ and $ i=3,4$. Moreover, $ R_{i, E} $ $ (i=3,4)$ maps $ L^{p,1}(\mu_2) $ boundedly into  $ L^{q,\infty}(\mu_2) $ when  $(1/p,1/q)$ is  the point $ Q_2(2\ga-1) $ or $ \tilde{Q}_3(\be) $.
	
	(ii) If $\be=0 $, then $ R_{i, E} $ is bounded from $L^p(\mu_2) $ to $L^q(\mu_2)$ for $(1/p,1/q)\in \tilde{\mathcal{Q}}(0,0)\setminus (Q_1(0),\tilde{Q}_3(0)] $ and $ i=3,4 $. 
\end{prop}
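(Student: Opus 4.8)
The plan is to establish the bounds for $R_{3,E}$ in full and deduce those for $R_{4,E}$ by symmetry, since its singular factor $(r+t-s)^{-1/2}$ sits at the right endpoint of the integration interval $[t,t+r]$ and the reflection $s\mapsto t+r-s$ reduces it to $R_{3,E}$. First I would localize the output radius to $r\sim 2^{-k}$, $k\ge 0$ (legitimate because $r<4/3$), and decompose $\int_{t-r}^{t}$ dyadically according to the size of $s-(t-r)$: on the slice where $s-(t-r)\sim 2^{-m}$, with necessarily $m\ge k$, the factor $(s-(t-r))^{-1/2}$ is $\sim 2^{m/2}$, and together with $r^{-1/2}\sim 2^{k/2}$ this isolates a piece $R_{3,E}^{k,m}$ carrying a prefactor $2^{(k+m)/2}$ that averages $|f_0|$ over an $s$-interval of length $\sim 2^{-m}$ lying near $t-r$. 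As in \cite[Proposition 5.4]{1997SWW} I would simultaneously refine the decomposition of the range of $r$ and of $E$ at scale $2^{-k}$ so that the location $t-r$ of the singular endpoint falls into a prescribed dyadic interval; this is the bookkeeping that keeps the multiplicity counts sharp. Since the cutoffs to $\{r\sim 2^{-k}\}$ have bounded overlap in $k$, it then suffices to bound $\sum_{m\ge k}R_{3,E}^{k,m}$ from $L^p$ to $L^q$ with a constant decaying geometrically in $k$.

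For each dyadic piece I would prove three endpoint estimates and interpolate. The decisive one is $L^1(\mu_2)\to L^1(\mu_2)$: dominating $\sup_{t\in E}$ by a sum over a cover of $E$ by $2^{-m}$-intervals and applying Fubini, the norm is $\lesssim 2^{-(k+m)/2}$ times the multiplicity, i.e.\ the largest number of those intervals that can fall inside a single $r$-window; since such a window has length only $\sim 2^{-k}$, this multiplicity is controlled by the \emph{local} covering number $N(E\cap I,2^{-m})$ with $|I|\sim 2^{-k}$, hence by $\min\{2^{(m-k)\ga},\,2^{m\be}\}$ by the hypotheses $\sup_{\de}\chi^E_{\mathrm{A},\ga}(\de)<\infty$ and $\sup_{\de}\chi^E_{\mathrm{M},\be}(\de)<\infty$. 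This is precisely where the Assouad dimension enters, in contrast with $R_{1,E},R_{2,E}$ (Proposition \ref{prop2D-R1}) and with the higher-dimensional situation, where only the Minkowski characteristic appears. Combined with the trivial bound $\|R_{3,E}^{k,m}f_0\|_{L^\infty}\lesssim 2^{(k-m)/2}\|f_0\|_{L^\infty}$ and the bound $\|R_{3,E}^{k,m}f_0\|_{L^\infty}\lesssim 2^{(k+m)/2}\|f_0\|_{L^1(\mu_2)}$ (Hölder in $s$), interpolation gives $\|R_{3,E}^{k,m}\|_{L^p\to L^q}\lesssim\min\{2^{ka_{\mathrm A}+mb_{\mathrm A}},\,2^{ka_{\mathrm B}+mb_{\mathrm B}}\}$ with exponents depending affinely on $(1/p,1/q)$. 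Summing over $m\ge k$ — splitting at the crossover $m_*\sim k\ga/(\ga-\be)$ between the Assouad- and Minkowski-dominated ranges — and then over $k\ge 0$, one verifies that all the resulting geometric series converge exactly when $(1/p,1/q)\in\tilde{\mathcal{Q}}(\be,\ga)$: the edge $[O,Q_2(2\ga-1)]$ (the line $q=2p$) is the Assouad threshold, the edge $[\tilde{Q}_3(\be),Q_2(2\ga-1)]$ the Minkowski threshold, while the edges $[O,Q_1(\be)]$ and $[Q_1(\be),\tilde{Q}_3(\be)]$ reflect the cost $\sim 2^{k\be}$ of summing over the $2^{-k}$-scale decomposition of $E$, only partially recouped by $\ell^p\hookrightarrow\ell^q$ — on the diagonal $p=q$ this forces $p>1+\be$, matching the Minkowski critical exponent $1+\dim_{\mathrm M}\!E$.

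It remains to treat the two boundary vertices and part (ii). At $Q_2(2\ga-1)$, where $q=2p$ and the Assouad $k$-series degenerates to $\sum_k 1$, and at $\tilde{Q}_3(\be)$, where the Minkowski $m$-series degenerates to $\sum_m 1$, the divergence is only logarithmic, so strong type fails but the restricted weak type $L^{p,1}(\mu_2)\to L^{q,\infty}(\mu_2)$ persists: apply the per-piece bounds to $f_0=\chi_F$, estimate the distribution function of each $R_{3,E}^{k,m}\chi_F$, and sum termwise by the standard argument for endpoint restricted weak type estimates (cf.\ \cite{2021AHRS}). For part (ii), the hypothesis $\sup_{0<\de<1}\chi^E_{\mathrm M,0}(\de)=\sup_{\de}N(E,\de)<\infty$ forces $E$ to be finite (so $\ga=\ga_*=0$), and $R_{3,E}$ is the supremum of finitely many single-dilation operators $f_0\mapsto r^{-1/2}\int_{t-r}^{t}(s-t+r)^{-1/2}|f_0(s)|\,ds$, each an elementary weighted average whose $L^p(\mu_2)\to L^q(\mu_2)$ mapping range is exactly $\tilde{\mathcal{Q}}(0,0)$ with the half-open segment $(Q_1(0),\tilde{Q}_3(0)]$ deleted — on $\{1/p=1\}$ with $1/q\le 1/2$ the kernel singularity $r^{-1/2}(r-r_0)^{-1/2}$ destroys $L^q$-integrability — and a finite supremum preserves these bounds. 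I expect the main obstacle to be the sharp bookkeeping: correctly splicing the Minkowski and Assouad covering estimates across the crossover scale and running the refined decomposition of the $r$-range and of $E$ so that every multiplicity has exactly the size demanded by $\tilde{\mathcal{Q}}(\be,\ga)$; the termwise restricted-weak-type summations at $Q_2(2\ga-1)$ and $\tilde{Q}_3(\be)$ are the second delicate point.
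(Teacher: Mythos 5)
Your decomposition and per-piece estimates essentially reproduce the paper's proof: the same dyadic pieces $R_{3,E}^{k,m}$ (up to the re-indexing $m\mapsto m+k$), the same three endpoint bounds with the $L^1(\mu_2)\to L^1(\mu_2)$ norm controlled by the local covering number $\sup_{|I|=2^{-k}}N(E\cap I,\cdot)$, interpolation, and Bourgain-type restricted weak type arguments at $Q_2(2\ga-1)$ and $\tilde{Q}_3(\be)$ (the refined re-decomposition of the $r$-range and of $E$ from \cite[Proposition 5.4]{1997SWW} that you invoke is needed only for $\mathfrak{M}_{E,p}$, not here, but is harmless). The genuine problem is your claim that summing the per-piece operator norms, with the minimum of the Minkowski and Assouad bounds split at the crossover scale, converges exactly on $\tilde{\mathcal{Q}}(\be,\ga)$, and the resulting plan to treat only the two vertices separately. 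On the edge $[O,Q_2(2\ga-1)]$ one has $q=2p$, so the factor $2^{k(1/p-2/q)}$ in the Assouad-based bound is identically $1$: for each fixed $k$ your estimate for the piece with $s-(t-r)\sim r$ is already $\sim 1$ (and the Minkowski alternative grows like $2^{k\be/q}$), so the sum over $k$ diverges at every point of this edge except $O$. Similarly, every interior point of the edge $[\tilde{Q}_3(\be),Q_2(2\ga-1)]$ is a convex combination of the two critical vertices, where the Minkowski and Assouad exponent pairs are negatively proportional; after the optimal crossover in $m$ the surviving exponent in $k$ is exactly $0$, and the $k$-sum again diverges. Since all blocks act on the same portion of $f_0$ (only the $r$-supports are disjoint, which gives no gain on the input side), no absolute summation of the per-piece norms can yield the strong type the proposition claims on these two open edges.

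This is repairable with tools you already set up, and it is exactly the paper's route: prove strong type at $Q_1(\be)$ by direct summation of the Minkowski-based bounds (both exponents are strictly negative there), prove restricted weak type at $Q_2(2\ga-1)$ and $\tilde{Q}_3(\be)$ — note that at each vertex both the $m$- and $k$-exponents vanish simultaneously, so your ``termwise'' distribution-function argument must be run in both parameters, i.e.\ Bourgain's interpolation lemma applied twice — and then recover the open edges and the interior by real interpolation between these endpoints and the trivial $L^\infty\to L^\infty$ bound. Two minor further points: the reflection reducing $R_{4,E}$ to $R_{3,E}$ is not literal (the reflected input depends on $t$ and $r$), though the symmetric argument does go through; and in part (ii) the failure you describe on $\{1/p=1,\ 1/q\le 1/2\}$ is not the removed segment $\{1\}\times[1/2,1)$ — in fact for finite $E$ one checks $L^1(\mu_2)\to L^q(\mu_2)$ boundedness for all $q<2$, so the mapping range is not \emph{exactly} the stated region; fortunately only the positive inclusion is required, and your argument for that part is fine.
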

\begin{proof}[Proof of  Theorem \ref{thm2Dendpoint}]
	Theorem \ref{thm2Dendpoint} is immediate from a combination of Proposition \ref{prop2D-R1}, \ref{prop2D-M} and \ref{prop2D-R3}. Note that $ \Delta(\beta)\subset \tilde{\mathcal{Q}}(\be, \ga)$ when $ 2\ga<\be+1 $ and $ \mathcal{Q}(\be, \ga)= \Delta(\beta)\cap \tilde{\mathcal{Q}}(\be, \ga)$ when $ 2\ga\geq \be+1 $.
\end{proof}
To get the restricted weak type estimate for the endpoint situation, we will apply an interpolation argument due to  Bourgain \cite{1985Bourgain}. See \cite[Section 6.2]{1999CSWW} for an abstract extension of Bourgain's interpolation trick.
\begin{lem}
	Suppose that $ a_0,a_1>0 $ and  $ (T_j)_{j\geq 0} $ are sublinear operators such that
	\[\|T_j\|_{L^{p_0,1}(\mu)\to L^{q_0,\infty}(\mu) }\leq M_12^{ja_0}, \quad \|T_j\|_{L^{p_1,1}(\mu)\to L^{q_1,\infty}(\mu) }\leq M_22^{-ja_1},\]
	uniformly for $ j\geq 0 $. Then $ \sum_{j\geq 0}T_j $ maps $ L^{p,1}(\mu) $ to $ L^{q,\infty}(\mu) $ with the operator norm $ O(M_1^{1-\theta}M_2^\theta) $, where
	\[(\frac{1}{p},\frac{1}{q})=(1-\theta)(\frac{1}{p_0},\frac{1}{q_0})+\theta(\frac{1}{p_1},\frac{1}{q_1}),\quad \theta=\frac{a_0}{a_0+a_1}.\]
\end{lem}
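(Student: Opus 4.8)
The plan is to run Bourgain's interpolation trick on characteristic functions. Since each $T_j$ is sublinear and $p\le q$ in all our applications, boundedness of $S:=\sum_{j\ge0}T_j$ from $L^{p,1}(\mu)$ to $L^{q,\infty}(\mu)$ reduces---by the standard equivalence between restricted weak type $(p,q)$ estimates and boundedness $L^{p,1}\to L^{q,\infty}$ for sublinear maps, together with $\|\chi_A\|_{L^{r,1}(\mu)}\sim\mu(A)^{1/r}$---to proving
\[\mu(\{|S\chi_A|>\la\})\les\big(M_1^{1-\theta}M_2^{\theta}\,\mu(A)^{1/p}/\la\big)^{q}\]
uniformly over measurable $A$ with $\mu(A)<\infty$ and over $\la>0$, with the implicit constant independent of $M_1,M_2$. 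So I would fix such an $A$, put $f=\chi_A$, and fix $\la>0$.

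The first real step is to split the level $\la$ geometrically across $j$. Fix an integer $N\ge0$ (to be optimized at the end), a number $\de$ with $0<\de<\min\{a_0,a_1\}$, and a constant $c\in(0,1)$ with $2c\sum_{k\ge0}2^{-\de k}\le1$. Since $|Sf|\le\sum_{j\ge0}|T_jf|$ and $\sum_{j\le N}c2^{-\de(N-j)}+\sum_{j>N}c2^{-\de(j-N)}\le1$, any point with $|Sf|>\la$ must have $|T_jf|>c2^{-\de|N-j|}\la$ for some $j$, hence
\[\{|Sf|>\la\}\subset\bigcup_{j\le N}\{|T_jf|>c2^{-\de(N-j)}\la\}\cup\bigcup_{j>N}\{|T_jf|>c2^{-\de(j-N)}\la\}.\]
The geometric weighting is the crux of the argument: replacing it by the uniform threshold $\la/(2(N+1))$ on $j\le N$ would introduce a spurious factor $(N+1)^{q_0}$, i.e. a logarithmic loss incompatible with the stated bound.

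Next I would feed in the two hypotheses. For $j\le N$ the first gives, via $\|\chi_A\|_{L^{p_0,1}}\sim\mu(A)^{1/p_0}$, that $\mu(\{|T_jf|>c2^{-\de(N-j)}\la\})\les\big(M_12^{ja_0}2^{\de(N-j)}\mu(A)^{1/p_0}/\la\big)^{q_0}$, whose $j$-dependence $2^{q_0(a_0-\de)j}$ grows geometrically (as $\de<a_0$), so the sum over $j\le N$ is dominated by its $j=N$ term $u^{q_0}$, with $u:=M_12^{Na_0}\mu(A)^{1/p_0}/\la$. Symmetrically, using $\de<a_1$, the sum over $j>N$ is $\les w^{q_1}$ with $w:=M_22^{-Na_1}\mu(A)^{1/p_1}/\la$. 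Thus for every integer $N\ge0$,
\[\mu(\{|Sf|>\la\})\les u^{q_0}+w^{q_1};\]
and, keeping instead all of the second family (the formal $N\to\infty$ choice), one also obtains $\mu(\{|Sf|>\la\})\les\big(M_2\mu(A)^{1/p_1}/\la\big)^{q_1}$.

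Finally I would optimize over $N$. The key algebraic observation is that $u^{a_1}w^{a_0}=M_1^{a_1}M_2^{a_0}\mu(A)^{a_1/p_0+a_0/p_1}\la^{-(a_0+a_1)}$ is free of $N$; raising to the power $1/(a_0+a_1)$ and invoking $\theta=a_0/(a_0+a_1)$ and $1/p=(1-\theta)/p_0+\theta/p_1$ yields the $N$-free identity $u^{1-\theta}w^{\theta}=M_1^{1-\theta}M_2^{\theta}\mu(A)^{1/p}/\la$. Choosing $N$ to be the nearest nonnegative integer to the unique solution $N^{*}$ of $u^{q_0}=w^{q_1}$ and writing $t:=u^{q_0}\sim w^{q_1}$, one has $u\sim t^{1/q_0}$ and $w\sim t^{1/q_1}$, so by $1/q=(1-\theta)/q_0+\theta/q_1$ we get $t^{1/q}\sim u^{1-\theta}w^{\theta}=M_1^{1-\theta}M_2^{\theta}\mu(A)^{1/p}/\la$, whence $\mu(\{|Sf|>\la\})\les t\sim\big(M_1^{1-\theta}M_2^{\theta}\mu(A)^{1/p}/\la\big)^{q}$, as required. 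For the range of $\la$ in which $N^{*}$ would be negative, one uses the $N=0$ bound or the $N\to\infty$ bound of the previous paragraph, whichever carries the correct power of $\la$; a short comparison of exponents shows that one of them already lies below the target there. I expect the main obstacle to be precisely this optimization when $q_0\ne q_1$---arranging the balance of $u^{q_0}$ against $w^{q_1}$ so that it reproduces the interpolated pair $(1/p,1/q)$, which is exactly what forces $\theta=a_0/(a_0+a_1)$, together with the extreme-$\la$ bookkeeping---while the geometric splitting of the second step is the other load-bearing ingredient (it removes the logarithm), and the reduction to characteristic functions and the passage back to the operator norm $O(M_1^{1-\theta}M_2^{\theta})$ are routine.
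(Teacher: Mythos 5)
Your argument is correct and is precisely the standard proof of Bourgain's interpolation trick; the paper does not prove the lemma itself but quotes it from Bourgain \cite{1985Bourgain} and \cite[Section 6.2]{1999CSWW}, where the same scheme is carried out: test on characteristic functions, split the level set of $\sum_j T_j\chi_A$ across $j$ with geometric weights $2^{-\de|N-j|}$ to avoid the logarithmic loss, sum each family to its extreme term $u^{q_0}$, $w^{q_1}$, and choose the crossover index $N$ to balance them, using the $N$-free identity $u^{1-\theta}w^{\theta}=M_1^{1-\theta}M_2^{\theta}\mu(A)^{1/p}/\la$ forced by $\theta=a_0/(a_0+a_1)$, with the range where the balancing $N$ would be negative handled by the second-family-only bound (your exponent comparison there is right). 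Two small points: keeping only the second family is the formal $N\to-\infty$ (not $N\to\infty$) choice; and the final upgrade from the restricted estimate on $\chi_A$ to boundedness on all of $L^{p,1}$ uses the atomic decomposition of $L^{p,1}$ together with either the monotonicity of the operators at hand or the normability of $L^{q,\infty}$, i.e. $q>1$ (true in every application in this paper), so as a statement for arbitrary $q$ that routine step deserves the caveat.
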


\begin{proof}[Proof of Proposition \ref{prop2D-R3}]
	We only show the estimates for $ R_{3,E} $ since the corresponding estimates for $ R_{4,E} $ can be proven in the same way. Define
	\[R_{3,E}^{k,m}f_0(r)=2^{k+\frac{m}{2}}\chi_{I_{-k}}(r)\sup_{\substack{t \in E \\ t\geq 3r/2}}\int_{t-r+2^{-m-1}r}^{t-r+2^{-m}r}|f_0(s)|sds,\]
	for $ k\geq 0 $ and $ m\geq 0 $. Then $R_{3,E}f_0(r)\lesssim \sum_{k\geq 0}\sum_{m\geq 0}R_{3,E}^{k,m}f_0(r)$. We claim that, for $ 1\leq p\leq q\leq \infty $,
	\begin{equation}\label{keyestimates for R3}
		\|R_{3,E}^{k,m}f_0\|_{L^q(\mu_2)}\lesssim [\sup_{\substack{ |I|=2^{-k}}}N(E\cap I, 2^{-m-k})]^{\tfrac{1}{q}}2^{m(\tfrac{1}{p}-\tfrac{1}{q}-\tfrac{1}{2})}2^{k(\tfrac{1}{p}-\tfrac{2}{q})}\|f_0\|_{L^p(\mu_2)}.
	\end{equation}
By interpolation, it suffices to show the inequality for $(p,q)=(\infty, \infty), (1,\infty)$ and $ (1,1) $. The $ L^1\to L^\infty $ and $ L^\infty\to L^\infty $ estimates are straightforward, so we leave it to the reader. For the $ L^1\to L^1 $ estimate, we cover $ E $ with intervals  $I_\nu^{m+k}$ of length $ 2^{-m-k} $.	Note that 
\begin{align*}
	R_{3,E}^{k,m}f_0(r)&\lesssim2^{k+\frac{m}{2}}\chi_{I_{-k}}(r)\sum_{\nu}\sup_{\substack{t\in E\cap I_\nu^{k+m}\\ t\geq 3r/2}}\int_{t-r+2^{-m-1}r}^{t-r+2^{-m}r}|f_0(s)|sds\\
	                  &\lesssim2^{k+\frac{m}{2}}\chi_{I_{-k}}(r)\sum_{\nu}\int_{-r+\tilde{I}_\nu^{k+m}}|f_0(s)|sds.
\end{align*}
Here we denote the double interval of $I_\nu^{m+k}$ by $ \tilde{I}_\nu^{k+m}$. Then, by applying Fubini's theorem twice, we have
\begin{align*}
	\int_{2^{-k}}^{2^{-k+1}}|R_{3,E}^{k,m}f_0(r)|rdr&\lesssim 2^{\frac{m}{2}}\sum_{\nu}\int_{I_{-k}}\int_{-r+\tilde{I}_\nu^{k+m}}|f_0(s)|sdsdr\\
	                                                &=2^{\frac{m}{2}}\sum_{\nu}\int_{\tilde{I}_\nu^{k+m}-I_{-k}}\big(\int_{-s+\tilde{I}_\nu^{k+m}}dr\big)|f_0(s)|sds\\
	                                                &\lesssim 2^{-\frac{m}{2}-k}\sum_{\nu}\int_{\tilde{I}_\nu^{k+m}-I_{-k}}|f_0(s)|sds\\
	                                                &\lesssim 2^{-\frac{m}{2}-k}\int_{0}^\infty |f_0(s)|s\big(\sum_{\nu\in A^{k,m}(s)}1\big)ds,
\end{align*}
where $ A^{k,m}(s):=\{\nu\, :\, s\in \tilde{I}_\nu^{k+m}-I_{-k}\}$. For any $ \nu_1, \nu_2\in A^{k,m}(s)$, it is easy to see that $ \text{dist}(\tilde{I}_{\nu_1}^{k+m},\tilde{I}_{\nu_2}^{k+m})\lesssim 2^{-k} $. This leads us to the uniform  upper bound 
\[\#A^{k,m}(s)\lesssim \sup_{\substack{ |I|=2^{-k}}}N(E\cap I, 2^{-m-k}).\]
Therefore, the required $ L^1(\mu_2)\to L^1(\mu_2) $ estimate has been completed.

We first focus on the case $ 0<\be<1$. The $  L^{\infty}(\mu_2)\to L^{\infty}(\mu_2) $ estimate is a direct consequence of calculation. Thus, it suffices to show the strong type estimate at $ Q_1(\beta) $ and the restricted weak type estimates at $Q_2(2\ga-1)$ and $\tilde{Q}_3(\be) $. Then the remaining strong type estimates follow easily from interpolation. By the assumption about  the  $\beta$-Minkowski  and $\ga$-Assouad characteristic of $ E $, we have
\begin{equation}\label{gamma-covering}
	\sup_{\substack{ |I|=2^{-k}}}N(E\cap I, 2^{-m-k})\lesssim 2^{m\ga},
\end{equation}
and 
\begin{equation}\label{beta-covering}
	\sup_{\substack{ |I|=2^{-k}}}N(E\cap I, 2^{-m-k})\leq N(E,2^{-m-k} )\lesssim 2^{(m+k)\be}.
\end{equation}
Substituting (\ref{beta-covering}) into (\ref{keyestimates for R3}), we get
\[\|R_{3,E}^{k,m}f_0\|_{L^q(\mu_2)}\lesssim2^{m(\tfrac{1}{p}-\tfrac{1-\be}{q}-\tfrac{1}{2})}2^{k(\tfrac{1}{p}-\tfrac{2-\be}{q})}\|f_0\|_{L^p(\mu_2)}.\]
By direct summation, $ L^{1+\be}(\mu_2)\to L^{1+\be}(\mu_2)$ estimate for $ R_{3,E} $ is obtained. Note that $\tilde{Q}_3(\be)$ is the intersection point of these two lines $ \tfrac{1}{p}-\tfrac{1-\be}{q}-\tfrac{1}{2}=0$ and $ \tfrac{1}{p}-\tfrac{2-\be}{q}=0 $. Thus, the restricted weak type estimate at $\tilde{Q}_3(\be)$ follows from applying Bourgain's interpolation trick twice. For the endpoint estimate at $ Q_2(2\ga-1)$, we substitute (\ref{gamma-covering}) into (\ref{keyestimates for R3}) to get
\[\|R_{3,E}^{k,m}f_0\|_{L^q(\mu_2)}\lesssim2^{m(\tfrac{1}{p}-\tfrac{1-\ga}{q}-\tfrac{1}{2})}2^{k(\tfrac{1}{p}-\tfrac{2}{q})}\|f_0\|_{L^p(\mu_2)}.\] 
Again, by using Bourgain's interpolation lemma twice, $ R_{3,E} $ is of the restricted weak type at $ Q_2(2\ga-1)$. 

Now we consider the simple case $ \be=0 $. Notice that $ \sup_{0<\de<1}\chi^E_{\mathrm{M},0}(\delta)<\infty$  implies that there are only finite points in $ E $ and $ \ga=0 $ as well. The only difference is  that the point $\tilde{Q}_3(0)=(1,1/2)$  does not lie in the interior of the triangle formed by $(0,0), (1,1), (1,0)$, so we can not obtain the restricted weak type at $ \tilde{Q}_3(0) $.
Aside from this, the other proofs are analogous to the previous case.
\end{proof}
The above proof gives non-endpoint results for $ R_{3,E} $ and $ R_{4,E} $.
\begin{cor}\label{cor2D-R3}
	Let $ d=2 $ and $ E\subset [1,2] $ with $ \dim_{\mathrm{M}}\!E=\be<1$, $\dim_{\mathrm{qA}}\!E=\ga$, $\dim_{\mathrm{A}}\!E=\ga_*$. Then $ R_{i,E} $ is bounded from $ L^p(\mu_2) $  to $ L^q(\mu_2) $ for $ i=3,4 $ and
	\[(1/p,1/q)\in \text{int}(\tilde{\mathcal{Q}}(\be, \ga))\cup[O, Q_1(\be)\big)\cup \big(O, Q_2( 2\ga_*-1)\big).\]
\end{cor}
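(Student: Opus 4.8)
The plan is to re-run the argument behind Proposition \ref{prop2D-R3} with the uniform characteristic hypotheses replaced by the dimensional data available here, exploiting that the target point lies in the \emph{open} region so that every relevant exponent carries a strict sign. As in that proof it suffices to treat $R_{3,E}$ (the operator $R_{4,E}$ being symmetric), and everything reduces to summing $\sum_{k\ge 0}\sum_{m\ge 0}\|R_{3,E}^{k,m}f_0\|_{L^q(\mu_2)}$ through the key inequality (\ref{keyestimates for R3}), in which the only set-dependent factor is the local covering number $\sup_{|I|=2^{-k}}N(E\cap I,2^{-m-k})$.

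First I would dispose of the two segments. Since $\dim_{\mathrm{M}}\!E=\be$ and $\dim_{\mathrm{A}}\!E=\ga_*$, for every $\eta>0$ one has $\sup_{0<\de<1}\chi^E_{\mathrm{M},\be+\eta}(\de)<\infty$ and $\sup_{0<\de<1}\chi^E_{\mathrm{A},\ga_*+\eta}(\de)<\infty$. Applying Proposition \ref{prop2D-R3}(i) with $(\be+\eta,\ga_*+\eta)$ in place of $(\be,\ga)$ gives $R_{i,E}\colon L^p(\mu_2)\to L^q(\mu_2)$ for $(1/p,1/q)\in\tilde{\mathcal{Q}}(\be+\eta,\ga_*+\eta)$ away from its two distinguished vertices; letting $\eta\to 0$ and taking the union over $\eta$ produces the half-open segments $[O,Q_1(\be))$ and $[O,Q_2(2\ga_*-1))$ (in particular the segment $(O,Q_2(2\ga_*-1))$ of the statement), together with all of $\text{int}(\tilde{\mathcal{Q}}(\be,\ga_*))$. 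This already covers the claimed region except, when $\ga<\ga_*$, the part of $\text{int}(\tilde{\mathcal{Q}}(\be,\ga))$ lying outside $\tilde{\mathcal{Q}}(\be,\ga_*)$ — a region adjacent to the vertex $Q_2(2\ga-1)$.

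For that remaining part I would return to (\ref{keyestimates for R3}) and estimate the local covering number by combining two bounds. The Minkowski bound (\ref{beta-covering}) gives $\sup_{|I|=2^{-k}}N(E\cap I,2^{-m-k})\lesssim_\eta 2^{(m+k)(\be+\eta)}$. On the other hand, by the definition of $\dim_{\mathrm{qA}}\!E$, for every $\eta>0$ one may fix $\theta\in(0,1)$ close to $1$ with $\overline{\dim}_{\mathrm{A},\theta}E<\ga+\eta$; then whenever $2^{-k}\ge(2^{-m-k})^{\theta}$ — i.e. $k(1-\theta)\le\theta m$ — one obtains directly $\sup_{|I|=2^{-k}}N(E\cap I,2^{-m-k})\lesssim_{\theta,\eta}2^{m(\ga+\eta)}$, while on the complementary range $k(1-\theta)>\theta m$ one first enlarges $I$ to an interval of length $(2^{-m-k})^\theta$ before applying the same estimate, obtaining $\sup_{|I|=2^{-k}}N(E\cap I,2^{-m-k})\lesssim_{\theta,\eta}2^{(m+k)(1-\theta)(\ga+\eta)}$, where $(m+k)(1-\theta)\le k(1-\theta)/\theta$ is small relative to $k$. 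Splitting $\sum_{k,m}\|R_{3,E}^{k,m}f_0\|_{L^q(\mu_2)}$ at the threshold $k(1-\theta)=\theta m$, and inserting the spectrum bound on the first range and the better of the two bounds above on the second, leaves in each range a double geometric series whose exponents in $2^k$ and $2^m$ are affine in $(1/p,1/q)$ with coefficients depending on $\be,\ga,\theta,\eta$. A short computation shows that for $(1/p,1/q)$ strictly inside $\tilde{\mathcal{Q}}(\be,\ga)$ one has $\tfrac1p-\tfrac2q<0$ and $\tfrac1p-\tfrac{1-\ga}q-\tfrac12<0$ (and the relevant Minkowski exponents are also strictly negative); hence, taking $\eta$ small and $\theta$ close enough to $1$ — uniformly on a compact neighbourhood of the point — all exponents become negative, the series converge, and $\|R_{3,E}f_0\|_{L^q(\mu_2)}\lesssim\|f_0\|_{L^p(\mu_2)}$.

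The hard part is this last step. Unlike the Assouad dimension, $\dim_{\mathrm{qA}}\!E$ only controls $N(E\cap I,\de)$ when $|I|$ is not far above $\de^\theta$, so one cannot merely nudge $\ga$ upward; one must split the double sum at the $\theta$-dependent line, deal with the ``enlarged-interval'' contribution, and — since $\theta\to 1$ is forced as the point approaches the boundary of $\tilde{\mathcal{Q}}(\be,\ga)$ — verify that the $\theta$- and $\eta$-dependent error exponents are absorbed uniformly on compact subsets of the open region. It is precisely the non-endpoint nature of the statement (strict inequalities in the interior and on the open segments) that makes this $\theta$-splitting admissible; this is the same device by which (\ref{nonendpoint-genericE}) is deduced from (\ref{endpoint-genericE}).
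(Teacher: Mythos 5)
Your first step (applying Proposition \ref{prop2D-R3} with parameters $(\be+\eta,\ga_*+\eta)$ and letting $\eta\to 0$) is exactly the paper's treatment of the two segments, and it also yields $\text{int}(\tilde{\mathcal{Q}}(\be,\ga_*))$; that part is fine (modulo the cosmetic point that for $\ga_*=1$ one should apply the proposition with $\ga=1$ rather than $\ga_*+\eta>1$). The gap is in the remaining region. Your convergence check rests on the claim that every point of $\text{int}(\tilde{\mathcal{Q}}(\be,\ga))$ satisfies $\tfrac1p-\tfrac{1-\ga}{q}-\tfrac12<0$, and this is false whenever $\ga>\be$: at the vertex $\tilde{Q}_3(\be)=(\tfrac{2-\be}{2},\tfrac12)$ the left-hand side equals $\tfrac{\ga-\be}{2}>0$, and the line $\tfrac1p-\tfrac{1-\ga}{q}-\tfrac12=0$ (it passes through $Q_2(2\ga-1)$ and $(\tfrac{2-\ga}{2},\tfrac12)$) cuts through the quadrilateral. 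When $\be<\ga<\ga_*$, the sliver of $\text{int}(\tilde{\mathcal{Q}}(\be,\ga))\setminus\tilde{\mathcal{Q}}(\be,\ga_*)$ adjacent to $\tilde{Q}_3(\be)$ consists precisely of such points, so it is not covered by your first step; and at such a point your ``first range'' sum, which inserts only the spectrum bound $2^{m(\ga+\eta)}$ into (\ref{keyestimates for R3}), diverges: for each fixed $k$ the $m$-exponent is $\tfrac1p-\tfrac{1-\ga-\eta}{q}-\tfrac12>0$. So the argument as written does not prove the corollary.

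The paper avoids this by not summing at all in the interior: it converts the quasi-Assouad hypothesis into the uniform lossy covering bound (\ref{loss-covering}), inserts it into (\ref{keyestimates for R3}), and then repeats the whole proof of Proposition \ref{prop2D-R3} with $\ep$-perturbed exponents --- strong type near $Q_1(\be)$, Bourgain's interpolation trick at the perturbed versions of $\tilde{Q}_3(\be)$ and $Q_2(2\ga-1)$, and interpolation --- so the points where your $\ga$-exponent has the wrong sign are reached by interpolation between near-corner restricted weak type estimates, not by a convergent double series. If you want to keep a direct summation, you must use, in the main range, the \emph{minimum} of the Minkowski bound $2^{(m+k)(\be+\eta)}$ and the spectrum bound $2^{m(\ga+\eta)}$ (switching at $m\sim\tfrac{\be}{\ga-\be}\,k$). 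Then the binding constraints become $\tfrac1p-\tfrac2q<0$, $\tfrac1p-\tfrac{1-\be}{q}-\tfrac12<0$, and the crossover condition, which is exactly strict membership in the half-plane bounded by the line through $\tilde{Q}_3(\be)$ and $Q_2(2\ga-1)$; these three do hold on $\text{int}(\tilde{\mathcal{Q}}(\be,\ga))$, and your $\theta$-threshold/enlargement treatment of the small-$m$ wedge takes care of the rest for $\theta$ close to $1$ (uniformly on compact subsets). Alternatively, prove your estimate only on the part of the region where $\tfrac1p-\tfrac{1-\ga}{q}-\tfrac12<0$ and recover the remaining sliver by interpolating with points of $\text{int}(\tilde{\mathcal{Q}}(\be,\ga_*))$ supplied by your first step. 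Either repair is routine, but without one of them the proposal has a genuine hole.
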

\begin{proof}
	For  small enough $ \ep>0 $, we have
	\[\sup_{0<\de<1}\chi^E_{\mathrm{M},\be+\ep}(\delta)<\infty,\quad  \sup_{0<\de<1}\chi^E_{\mathrm{A},\ga_*+\ep}(\delta)<\infty.\]
	Applying Proposition \ref{prop2D-R3}, we obtain the strong type estimates on $[O,Q_1(\be)) $ and $(O.Q_2(2\ga_*-1)) $. According to the definition of $\dim_{\mathrm{qA}}\!E $,  we have
	\begin{equation}\label{loss-covering}
		N(E\cap I,\de)\lesssim_\ep(\de/|I|)^{-\ga-\ep}\de^{-\ep},
	\end{equation}
	for any $ 0<\ep\ll1 $ and $ |I|\geq \de $. Indeed, one can easily get this by considering the case $|I|\geq \de^{1-\ep} $ and $\de\leq |I|\leq \de^{1-\ep}$ respectively.
	Using (\ref{loss-covering}), we proceed analogously as in the proof of Proposition \ref{prop2D-R3} to get the strong type estimates in the interior of $\tilde{\mathcal{Q}}(\be, \ga)$.
\end{proof}
\begin{proof}[Proof of Theorem \ref{thmcircular}]
	When $ \be<1 $, Theorem \ref{thmcircular} is an immediate consequence of Corollary \ref{cor2D-R1}, \ref{cor2D-M} and \ref{cor2D-R3}. The $ \be=1$ case follows directly from Theorem \ref{thminterval} since $ M_Ef\leq M_{[1,2]}f$.
\end{proof}

\section{Necessary conditions}\label{sect-nece}
In this section, we prove Theorem \ref{thmnecessary} and the negative results of Theorem \ref{thmhigherdim}. Let $\be= \dim_{\mathrm{M}}\!E, \ga=\dim_{\mathrm{qA}}\!E, \ga_*=\dim_{\mathrm{A}}\!E$. Note that the measure $ \mu_d $ is not translation invariant on $\R^+$. So it seems hard to get the necessary condition $ p\leq q $ directly from the classical result of H\"ormander \cite{1960Hormander}. Fortunately, we can find another simple example to ensure this condition. The counterexamples regarding the other two lines, $ OQ_2(\be)$ and $ Q_1(\be)Q_2(\be)$, are quite standard (see \cite{1997Schlag,1997SS,2021AHRS}). To prove the endpoint necessary conditions on $[Q_1(\be), Q_2(\be)]$ especially when $ \be=1 $, we use a variant of the example in \cite[Section 2.1]{1997SWW}. 

However, there is a  new phenomenon in two dimensions for quasi-Assouad regular sets $E$ when $ 2\ga>\be+1 $. The counterexample for the line segments $ (Q_2(2\ga-1), Q_3(\be,\ga)) $ and $ [O,Q_1(2\ga_*-1)]$ originates from the characteristic function of the annulus, which we have already seen in the proof of Proposition \ref{prop-2DLorentz-necessary}. This example heavily relies on  the singularity of the kernel $ K_t(r,s) $ (see (\ref{kernel})) when $ d=2 $ and indicates the role of the localized covering number $ N(E\cap I,\de)$.
\subsection{The line connecting $O$ and $Q_1(\be)$}\label{stand1}
 Let $ B_R $ be the ball of radius $ R\gg 1 $ centered at the origin in $ \R^d $ and $ \chi_R $ be the characteristic function of $ B_R  $. Choose some $ t_0\in E $. Then we have
\[M_E \chi_R(x)\geq A_{t_0}\chi_R(x)\geq 1,\]
when $ |x|\leq R/2$. Hence, $ R^{d/q}\lesssim R^{d/p} $, which leads to $ p\leq q $ after letting $ R\to\infty $.
\subsection{The line connecting $O$ and $Q_2(\be)$}\label{stand2}
This is the standard dimensional constraint for the fixed time spherical  averages. Choose some $ t_0\in E\subset [1,2]$ and $ f_\de(x)=\chi_{(t_0-\de, t_0+\de)}(|x|)$ with $ 0<\de<1 $. Clearly, $ \|f_\de\|_{L^p_{rad}(\R^d)}\lesssim\de^{1/p}$. Furthermore, $M_E f_\de(x)\geq A_{t_0}f_\de(x)\geq 1$ when $ |x|\leq \de $. Consequently, $ \de
^{d/q}\lesssim \de^{1/p} $, which implies $ q\leq pd $ after letting $ \de\to 0 $.
\subsection{The line connecting $Q_1(\be)$ and $Q_2(\be)$}\label{stand3}
Given $ 0\leq \de<1 $, we take $ f_\de(x)=\chi_{[0,\de]}(|x|)$. Consider the maximal $ \de$-separated set $ \{t_1,t_2,\cdots, t_m\}\subset E $. Obviously, $ m\sim N(E,\de) $. When $ |x|\in A_i:=[t_i-\de/2,t_i+\de/2] $, it is not hard to get that 
\[M_E f_\de(x)\geq A_{t_i}f_\de(x)\gtrsim \de^{d-1}.\]
Note that these annuli are disjoint. Then their union has  Lebesgue measure $\sim \de N(E,\de) $. Consequently, we get
\begin{equation}\label{coveringnumber}
	\de^{d-1}(\de N(E,\de))^{1/q}\lesssim \de^{d/p}.
\end{equation}
For any $\ep>0 $, it follows from the definition of Minkowski dimension that there exists a sequence $ \de_m\to 0 $ such that $ N(E, \de_m)\geq \de_m^{\ep-\be} $. Substituting it into (\ref{coveringnumber}),  letting $ m\to \infty $ first and $ \ep\to 0 $ next,  we obtain the desired necessary condition
\[\frac{1-\be}{q}+d-1\geq \frac{d}{p}.\]
\subsection{Endpoint necessary conditions on $[Q_1(\be), Q_2(\be)]$}
To prove the necessary part of (ii) and (iii) in Theorem \ref{thmhigherdim}, we shall need the following proposition.
\begin{prop}
	Let $ d\geq 2  $ and $ 0\leq \be \leq 1 $. If $ M_E $ is of radial strong type $ (p, q) $ for some $ (1/p,1/q)\in [Q_1(\be), Q_2(\be)] $, then 
	\[\sup_{0<\de<1}\chi^E_{\mathrm{M},\beta}(\delta)<\infty.\]
	Suppose that in addition $ \be=1 $, then 
	\[\sup_{0<\de<1}(\log(\tfrac{1}{\de}))^{\tfrac{q}{d}}\chi^E_{\mathrm{M},1}(\delta)<\infty.\]
\end{prop}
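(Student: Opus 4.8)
The plan is to prove both statements by constructing explicit radial counterexamples, following the pattern of \cite[Section 2.1]{1997SWW}, and then reading off the conclusions from the identity $\tfrac{d}{p}=\tfrac{1-\be}{q}+d-1$, which holds at every point of $[Q_1(\be),Q_2(\be)]$ (it is checked directly at the two vertices in (\ref{tridef}) and is linear in $(1/p,1/q)$). Thus $\tfrac{d}{p}-(d-1)-\tfrac1q=-\tfrac{\be}{q}$ on the whole segment, which is the only arithmetic input needed.

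For the first assertion I would fix $\de\in(0,1)$ and take $f_\de(x)=\chi_{[0,\de]}(|x|)$, so that $\|f_\de\|_{L^p(\R^d)}\sim\de^{d/p}$. From the representation (\ref{radialformula})--(\ref{kernel}) together with the elementary lower bound $K_t(r,s)\gtrsim s^{d-2}$, valid whenever $r\sim t\sim 1$ and $2|r-t|\le s\ll 1$, one obtains $A_tf_\de(x)\gtrsim\de^{d-1}$ for any $t\in E$ with $|\,|x|-t\,|\le\de/4$; hence $M_Ef_\de\gtrsim\de^{d-1}$ on the set $\{x:\text{dist}(|x|,E)\le\de/4\}$, which by Lemma \ref{lem-binary} has Lebesgue measure $\gtrsim\de N(E,\de)$. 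The radial strong type $(p,q)$ inequality then forces $\de^{d-1}(\de N(E,\de))^{1/q}\lesssim\de^{d/p}$, and inserting the identity above this collapses to $N(E,\de)\lesssim\de^{-\be}$, i.e. $\chi^E_{\mathrm{M},\beta}(\de)\lesssim 1$ uniformly in $\de$.

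For the second assertion, note that when $\be=1$ the identity forces $p=p_d=\tfrac{d}{d-1}$ at every point of $[Q_1(1),Q_2(1)]$, while $q$ ranges over $[p_d,p_dd]$. Here I would replace $f_\de$ by the truncated singular profile $f_\de(x)=|x|^{-(d-1)}\chi_{[\de,1/2]}(|x|)$; since $(d-1)(p_d-1)=1$ one has $\|f_\de\|_{L^{p_d}_{rad}(\R^d)}^{p_d}\sim\int_\de^{1/2}s^{-1}\,ds\sim\log(1/\de)$. Using $K_t(r,s)\gtrsim s^{d-2}$ on $2\de\le s\le\tfrac14$ (with $r\sim t$ and $|r-t|\le\de$) gives $A_tf_\de(x)\gtrsim\int_{2\de}^{1/4}s^{d-2}s^{-(d-1)}\,ds\sim\log(1/\de)$ for every $x$ with $\text{dist}(|x|,E)\le\de$, so $M_Ef_\de\gtrsim\log(1/\de)$ on a set of measure $\gtrsim\de N(E,\de)$, again by Lemma \ref{lem-binary}. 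The strong type $(p_d,q)$ bound then yields $(\de N(E,\de))^{1/q}\log(1/\de)\lesssim(\log(1/\de))^{1/p_d}$, and since $\tfrac{1}{p_d}-1=-\tfrac1d$ this rearranges exactly to $\de N(E,\de)\lesssim(\log(1/\de))^{-q/d}$, which is the claimed control on $(\log(1/\de))^{q/d}\chi^E_{\mathrm{M},1}(\de)$.

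The delicate point will be the uniform lower bound for the kernel $K_t(r,s)$ of (\ref{kernel}). Once the integral is restricted to $s\ge 2|r-t|$, away from the diagonal $s=|r-t|$, the quantities $(r+t)^2-s^2$, $s^2-(r-t)^2$ and $(r+t)^2-(r-t)^2=4rt$ are comparable to $1$, $s^2$ and $1$ respectively (recall $r,t\sim1$ and $s\ll1$ in both constructions), so $K_t(r,s)\sim s^{d-2}$ for all $d\ge2$, the two-dimensional case with its square-root singularity included. Everything else — the size estimate $|W_n|\sim\de N(E,\de)$ of Lemma \ref{lem-binary} and the one-variable integrals above — is routine.
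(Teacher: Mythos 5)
Your proposal is correct, and the counterexamples are essentially the paper's, but the execution differs in the $\be=1$ half, so a comparison is worthwhile. For the first assertion your argument is the paper's: the paper just cites (\ref{coveringnumber}), which is obtained in Section \ref{stand3} from the same test function $\chi_{[0,\delta]}(|x|)$ (there via disjoint annuli around a maximal $\delta$-separated subset of $E$, rather than via the $\delta$-neighborhood and Lemma \ref{lem-binary} -- an immaterial difference), followed by the same use of the segment identity $\tfrac{d}{p}=\tfrac{1-\be}{q}+d-1$. For $\be=1$ the paper also uses a truncated Stein profile, $f(x)=|x|^{1-d}\chi_{[2^{-10n},1]}(|x|)$ with $\|f\|_{L^{p_d}}\sim n^{1/p_d}$, but it gets the pointwise bound $M_Ef\gtrsim n$ on $W_n$ by decomposing the surface integral into spherical caps $A_{j,x}$, $2\le j\le n/2$, each contributing $\gtrsim 1$, and then concludes $|W_n|\lesssim n^{-q/d}$, converting to $\chi^E_{\mathrm{M},1}$ via Lemma \ref{lem-binary}. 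You instead run the lower bound through the radial representation (\ref{radialformula})--(\ref{kernel}) using $K_t(r,s)\sim s^{d-2}$ on $\{s\ge 2|r-t|,\ r\sim t\sim 1,\ s\ll 1\}$, which is indeed valid for every $d\ge 2$ (for $d=2$ the bracket is $\sim s$ and enters with exponent $d-3=-1$), and you state the conclusion directly as $\delta N(E,\delta)\lesssim(\log(1/\delta))^{-q/d}$. The two routes are equivalent in substance; yours stays within the one-dimensional kernel framework used throughout the rest of the paper and bypasses the cap geometry, at the price of justifying the kernel lower bound, while the paper's cap argument avoids the kernel altogether and works verbatim from the definition of $A_t$. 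One cosmetic point in your write-up: since $\mathrm{dist}(|x|,E)$ need not be attained in $E$, shrink the neighborhood slightly (say $\mathrm{dist}(|x|,E)\le\delta/8$, choosing $t\in E$ with $||x|-t|\le\delta/4$) so that the integration range $[2|r-t|,\delta]\supset[\delta/2,\delta]$ is nonempty; Lemma \ref{lem-binary} still gives measure $\gtrsim\delta N(E,\delta)$ for this smaller set, so nothing is lost.
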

\begin{proof}
	The first assertion follows directly from (\ref{coveringnumber}). For the second necessary condition, it suffices to show that $ |W_n|\lesssim n^{-q/d} $ according to Lemma \ref{lem-binary}.  We may take 
	\[f(x)=|x|^{1-d}\chi_{[2^{-10n},1]}(|x|),\]
	where $ n\gg 1 $. It is clear that $ \|f\|_{L^{p_d}(\R^d)}\sim n^{1/p_d} $.  Here we use the shorthand notation $ p_d=d/(d-1) $ as before. Given $ x\in \R^d $ with $ |x|\in W_n $, there exists a $ t_x\in E $ such that $ ||x|-t_x|\leq 2^{-n+1}$. Set 
	\[A_{j,x}=\{y\in \mathbb{S}^{d-1}\, :\, \big|y-\tfrac{x}{|x|}\big|\in [2^{-j},2^{-j+1}]\}\] 
	for $ 2\leq j\leq n/2 $.  Clearly,  the surface measure of this spherical cap $ A_{j,x} $ is comparable to $ 2^{-j(d-1)} $ and $|x-t_xy|\sim 2^{-j} $ for $ y\in A_{j,x} $. Then, we have, as long as $|x|\in W_n$,
	\[M_E f(x)\geq A_{t_x}f(x)\geq \sum_{j=2}^{n/2}\int_{A_{j,x}}f(x-t_xy)d\sigma(y)\gtrsim n.\] Thus, 
	$$n|W_n|^{1/q}\lesssim \|M_Ef\|_{L^q}\lesssim \|f\|_{L^{p_d}}\sim n^{\tfrac{d-1}{d}},$$
	which implies $ |W_n|\lesssim n^{-q/d}$ as required. 
\end{proof}
\subsection{Proof of Theorem \ref{thmnecessary}}
When $ d=2 $, we have an upper bound for the localized covering number $ N(E\cap I,\de) $.
\begin{prop}
	Let $ d=2 $. Suppose $ M_E $ is of radial restricted weak  type $ (p, q) $. Then, for any interval $ I\subset [1,2] $ and $ \de>0 $ such that $ \de\leq |I|\leq 1$, we have 
	\begin{equation}\label{ineq-crucial}
		\de^{\frac{1}{2}+\frac{1}{q}}|I|^{\frac{1}{q}-\frac{1}{2}}N(E\cap I,\de)^{\frac{1}{q}}\lesssim \de^{\frac{1}{p}}.
	\end{equation}
\end{prop}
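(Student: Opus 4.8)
The plan is to exploit the same kind of annulus example used in the proof of Proposition \ref{prop-2DLorentz-necessary}, but localized to an interval $I\subset[1,2]$, together with the singularity of the kernel $K_t(r,s)$ in \eqref{kernel} when $d=2$. Given an interval $I\subset[1,2]$ with center $t_I$ and a scale $\de\le|I|\le 1$, let $\{t_1,\dots,t_m\}\subset E\cap I$ be a maximal $\de$-separated subset, so that $m\sim N(E\cap I,\de)$. The test function should detect each $t_i$ through the reciprocal square-root singularity at the endpoint $s=|r-t_i|$: take $f_\de(x)=\chi_{\tilde{\mathcal A}}(|x|)$ where $\tilde{\mathcal A}$ is a union of $m$ intervals, one near each $t_i-r_0$ for an appropriately chosen radius-range. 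Concretely, fix a small radius $r_0\sim|I|$ (or rather let $r$ range over a set of measure $\sim|I|$, so that $|r-t_i|$ is comparable to $|I|$ and the spheres behave uniformly), and set $f_\de$ to be the characteristic function of an annulus of thickness $\de$ at radius roughly $t_i-r_0$; summing over $i$ (which is legitimate since the sets $t_i-r_0$ are $\de$-separated) one gets $\|f_\de\|_{L^{p,1}_{rad}}\lesssim \bigl(\de\,N(E\cap I,\de)\bigr)^{1/p}$ (the factor $|I|$ from $\mu_2$ is $\sim 1$ here).

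Next I would compute the lower bound for $M_E f_\de$. Using \eqref{radialformula} and the two-dimensional kernel, for $|x|=r$ with $r$ in the chosen range of measure $\sim|I|$ and for the choice $t=t_i$, the relevant piece of $A_{t_i}f_\de(x)$ is
\[
A_{t_i}f_\de(x)\gtrsim r^{-1/2}\int_{|r-t_i|}^{|r-t_i|+\de}\bigl(s-|r-t_i|\bigr)^{-1/2}\,ds \sim r^{-1/2}\de^{1/2}\sim \de^{1/2},
\]
provided the annulus of $f_\de$ associated to $t_i$ is placed exactly at the left endpoint $s\in[|r-t_i|,|r-t_i|+\de]$; since $r\sim t_i\sim 1$ the weight $s/\sqrt{(r+t_i)^2-s^2}$ contributes only a harmless constant. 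Thus on a set of $r$'s of measure $\gtrsim|I|$ we have $M_Ef_\de(x)\gtrsim \de^{1/2}$, so the superlevel set $\{M_Ef_\de>c\de^{1/2}\}$ has $\mu_2$-measure $\gtrsim|I|$. Feeding this into the restricted weak type $(p,q)$ inequality gives $\de^{1/2}\,|I|^{1/q}\lesssim \bigl(\de\,N(E\cap I,\de)\bigr)^{1/p}$, and a bit of bookkeeping — redoing the construction with the annuli placed so each $r$ can reach \emph{every} $t_i$ (i.e. letting the annulus for $t_i$ sit at radius $t_i - r$ for a common $r$, which is possible because $|r-t_i|$ stays in a fixed dyadic window), so that $M_Ef_\de\gtrsim \de^{1/2}$ on \emph{all} of the measure-$\sim|I|$ set of radii — yields the sharper $\de^{1/2}\,\bigl(|I|\,N(E\cap I,\de)\bigr)^{1/q}\lesssim \bigl(\de\,N(E\cap I,\de)\bigr)^{1/q}\cdot$ ... rearranging to the stated \eqref{ineq-crucial}: $\de^{\frac12+\frac1q}|I|^{\frac1q-\frac12}N(E\cap I,\de)^{\frac1q}\lesssim\de^{\frac1p}$.

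The main obstacle I anticipate is arranging the geometry so that a single range of radii $r$ (of measure comparable to $|I|$) simultaneously "sees" all the separated points $t_i\in E\cap I$ with the full square-root gain, while keeping the supports of the annuli disjoint so that the $L^{p,1}$ norm of $f_\de$ is exactly of size $(\de N(E\cap I,\de))^{1/p}$ and not larger. The key quantitative point is that for $r$ in a fixed dyadic interval and $t_i$ ranging over $I$, the quantity $|r-t_i|$ can be made to lie in a fixed dyadic window of width $\sim|I|$, so the annuli $\{s: |s-(t_i-r)|<\de\}$ (as $i$ varies, $r$ fixed) are $\de$-separated translates; this is precisely where the localization parameter $|I|$ enters and produces the exponent $|I|^{\frac1q-\frac12}$ in \eqref{ineq-crucial}. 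Once this packing is set up correctly, the rest is the routine substitution into the restricted weak type inequality as above.
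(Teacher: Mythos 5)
Your general strategy (an annulus example exploiting the square--root singularity of the two--dimensional kernel, a maximal $\de$-separated subset of $E\cap I$, and substitution into the restricted weak type inequality) is the right one, but the execution has a genuine gap: you never produce the factor $|I|^{\frac1q-\frac12}$ in \eqref{ineq-crucial}. In your construction the observation points sit at radii $r\sim t_i\sim 1$, so the kernel factor $r^{-1/2}$ is harmless and your level is only $\de^{1/2}$; plugging this into the weak type inequality gives bounds of the form $\de^{1/2}|I|^{1/q}\lesssim (\de N(E\cap I,\de))^{1/p}$, which does not rearrange to \eqref{ineq-crucial} (it lacks the negative power of $|I|$ that drives the applications in Theorem \ref{thmnecessary}). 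The "bit of bookkeeping" you invoke to fix this is also unsound: for a \emph{fixed} test function consisting of annuli at radii $a_i$, the square--root endpoint gain at a point with $|x|=r$ requires $\big||r-t|-a_i\big|\lesssim\de$ for some $t\in E$, so the set of radii enjoying the gain has measure $O(N^2\de)$, not $\gtrsim|I|$; the claim that $M_Ef_\de\gtrsim\de^{1/2}$ on a set of radii of measure $\sim|I|$ is unjustified (and the final displayed inequality is left incomplete, so the derivation of \eqref{ineq-crucial} is never actually carried out).

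The paper's proof gets the missing $|I|$-power by moving the observation points close to the origin rather than complicating the test function. Write $I=(u,u+|I|)$, keep only the $\de$-separated points $t_1,\dots,t_\nu$ lying in the right half $(u+|I|/2,u+|I|)$, and take the \emph{single} annulus $f_\de(x)=\chi_{[u-\de,u+\de]}(|x|)$, so $\|f_\de\|_{L^{p,1}_{rad}}\sim\de^{1/p}$. For $r=|x|\in[t_i-u,\,t_i-u+\de/2]$ one has $r\sim|I|$, and by \eqref{radialformula}--\eqref{kernel} the factor $r^{-1/2}$ together with $\int_0^{\de/2}s^{-1/2}ds\sim\de^{1/2}$ gives $M_Ef_\de(x)\gtrsim\de^{1/2}|I|^{-1/2}$. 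The $\nu\sim N(E\cap I,\de)$ annuli of radii $\sim|I|$ and width $\de/2$ are disjoint, so the superlevel set has $\mu_2$-measure $\sim N(E\cap I,\de)\,\de\,|I|$, and the restricted weak type $(p,q)$ bound yields $\de^{1/2}|I|^{-1/2}\big(N(E\cap I,\de)\,\de\,|I|\big)^{1/q}\lesssim\de^{1/p}$, which is exactly \eqref{ineq-crucial}. It is precisely the combination ``single $\de$-annulus at the left endpoint of $I$'' plus ``evaluation at radii $r\sim|I|$'' that your proposal is missing.
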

\begin{proof}
	For convenience, it suffices to consider the case $I=(u, u+|I|) \subset [1,2]$ with $ \de\leq |I|\leq 1/4 $ and $ 0<\de\ll 1$. We denote the maximal $ \de$-separated set of $ E\cap I $ by $  \{t_1,t_2,\cdots, t_m\}$. It is easy to see that $ N(E\cap I,\de)\sim m $. Without loss of generality, we may assume that at least half of $ \{t_1,t_2,\cdots, t_m\} $ belongs to $ (u+|I|/2, u+|I|) $. For simplicity, we denote them by $ t_1,t_2,\cdots, t_\nu $ with $ \nu\geq m/2 $. Let  $ f_\de(x)=\chi_{[u-\de, u+\de]}(|x|)$ for a give $ 0<\de\ll 1 $. Clearly,  $ \|f_\de\|_{ L_{rad}^{p,1}(\R^2)}\sim\de^{1/p}$. By (\ref{radialformula}) and (\ref{kernel}), we have 
	\[M_E f_\de(x)\geq A_{t_i}f_\de(x)\gtrsim r^{-1/2} \int_{0}^r\frac{1}{\sqrt{s}}\chi_{[u-\de, u+\de]}(s+t_i-r)ds,\]
	when $ r=|x|\leq t_i/2$. For any $ r\in [t_i-u,t_i-u+\tfrac{\de}{2}] $ with $ 1\leq i\leq \nu $, it follows from easy calculation that $ s+t_i-r\in [u-\tfrac{\de}{2},u+\tfrac{\de}{2}] $ whenever $ s\in [0,\de/2] $. Consequently,
	\[M_E f_\de(x)\gtrsim r^{-1/2}\int_{0}^{\de/2}\frac{1}{\sqrt{s}}ds\sim \de^{1/2}|I|^{-1/2}.\]
    if $ |x|\in\bigcup_{i=1}^\nu [t_i-u,t_i-u+\tfrac{\de}{2}] $. These annuli are disjoint since $ t_i $ are mutually $ \de$-separated. Hence their union has Lebesgue measure $ \sim \nu \de |I| $. Note that $ \nu\sim m\sim N(E\cap I,\de)$, which yields the desired bound (\ref{ineq-crucial}).
\end{proof}

\begin{proof}[Proof of Theorem \ref{thmnecessary}]
	By the standard counterexamples from Section \ref{stand2}-\ref{stand3}, $ p\geq \frac{\be+3}{2} $ if $ M_E $ is of  radial restricted weak type $ (p,2p) $. According to the definition of Assouad dimension (\ref{def-assouad}), for any $ \ep>0 $, there exist $ \de_m \to 0$ and $ I_m\subset [1,2] $ of length  $ |I_m|\geq \de_m $ such that
	\begin{equation}\label{sequence1}
		N(E\cap I_m,\de_m)\geq C_m(\de_m/|I_m|)^{-\ga_*+\ep}.
	\end{equation}
	Here $ C_m\to \infty $ as $ m\to\infty $. Set $ A_m=|I_m|/\de_m $. It is easy to see that $ A_m\to \infty $ since there is a trivial estimate $N(E\cap I_m,\de_m)\leq A_m$. Substitute (\ref{sequence1}) into (\ref{ineq-crucial}) for $q=2p$, we have
	\[A_m^{\frac{1+\ga_*-\ep}{2p}-\frac{1}{2}}\lesssim 1.\]
	Letting $m\to \infty $ first and $ \ep\to 0 $ next, we obtain the desired bound $ p\geq 1+\ga_* $. 
	
	Now we consider the line segment with endpoints $Q_2(2\ga-1)$ and $Q_3(\be,\ga)$  for the quasi-Assouad regular set $ E $ when $ 2\ga> \be+1 $. Set $\ga_\theta=\overline \dim_{\mathrm{A},\theta}E $. By the definition of Assouad spectrum, there exist $ \de_m \to 0$ and $ I_m\subset [1,2] $ of length  $ |I_m|\geq \de_m^\theta $ such that
	\[N(E\cap I_m,\de_m)\geq C_m(\de_m/|I_m|)^{-\ga_\theta +\ep},\]
	with $ C_m\to \infty$. Proceeding as before, we have
	\[\de_m^{\tfrac{1}{2}+\tfrac{1-\ga_\theta+\ep}{q}}|I_m|^{\tfrac{1+\ga_\theta-\ep}{q}-\tfrac{1}{2}}\lesssim \de_m^{\tfrac{1}{p}}.\]
	Note that, if $ q<2(1+\ga) $, there exists a small enough $ \ep>0 $ such that $\tfrac{1+\ga_\theta-\ep}{q}-\tfrac{1}{2}>0$  whenever $ \theta>1-\frac{\be}{\ga}$, since $ E $ is quasi-Assouad regular. Then, using  $|I_m|\geq \de_m^\theta $ and letting $ m\to\infty $, we get
	\[\frac{1}{2}+\frac{1-\ga+\ep}{q}+\theta\big(\frac{1+\ga-\ep}{q}-\frac{1}{2}\big)\geq \frac{1}{p},\]
	for $  \theta>1-\frac{\be}{\ga}$ and $ \ep\ll 1 $. Letting $ \epsilon\to 0 $ and $ \theta\to 1-\frac{\be}{\ga} $, we find the necessary condition 
	\[L(\frac{1}{p},\frac{1}{q}):=\frac{1}{2}+\frac{1-\ga}{q}+\big(1-\frac{\be}{\ga}\big)\big(\frac{1+\ga}{q}-\frac{1}{2}\big)-\frac{1}{p}\geq 0.\]
	It follows from a straightforward computation that $ L(Q_1(2\ga-1))=0$ and $ L(Q_3(\be,\ga))=0$.
\end{proof}

\begin{center}
ACKNOWLEDGMENTS
\end{center}

The  author would like to thank Professor Chengbo Wang for his encouragement and helpful discussion. The author would also like to express gratitude to Professor Sanghyuk Lee for the warm hospitality  during his visit to Seoul National University. After completing this paper, the author learned from Professor Seeger that Beltran--Roos--Seeger \cite{2024BRS} has obtained similar high-dimensional results. In two dimensions, they introduced a new  concept in fractal geometry to give a complete characterization of $ \overline{\mathcal{T}_E^{rad}}$ for all given $ E\subset[1,2]$.
\vskip10pt


\begin{thebibliography}{10}
		
		\bibitem{2021AHRS}
		Theresa. Anderson, Kevin. Hughes, Joris. Roos, and Andreas Seeger.
		\newblock {$L^p\to L^q$} bounds for spherical maximal operators.
		\newblock {\em Math. Z.}, 297(3-4):1057--1074, 2021.
		
		\bibitem{1979Assouad}
		Patrice Assouad.
		\newblock \'etude d'une dimension m\'etrique li\'ee \`a{} la possibilit\'e{} de
		plongements dans {${\bf R}\sp{n}$}.
		\newblock {\em C. R. Acad. Sci. Paris S\'er. A-B}, 288(15):A731--A734, 1979.
		
		\bibitem{1985Bourgain}
		Jean Bourgain.
		\newblock Estimations de certaines fonctions maximales.
		\newblock {\em C. R. Acad. Sci. Paris S\'er. I Math.}, 301(10):499--502, 1985.
		
		\bibitem{1986Bourgain}
		Jean Bourgain.
		\newblock Averages in the plane over convex curves and maximal operators.
		\newblock {\em J. Analyse Math.}, 47:69--85, 1986.
		
		\bibitem{1999CSWW}
		Anthony Carbery, Andreas Seeger, Stephen Wainger, and James Wright.
		\newblock Classes of singular integral operators along variable lines.
		\newblock {\em J. Geom. Anal.}, 9(4):583--605, 1999.
		
		\bibitem{2024BRS}
		Beltran David, Joris Roos, and Andreas Seeger.
		\newblock Spherical maximal operators with fractal sets of dilation on radial
		functions.
		\newblock {\em Preprint}, 2024.
		
		\bibitem{2019FHTTY}
		Jonathan~M. Fraser, Kathryn~E. Hare, Kevin~G. Hare, Sascha Troscheit, and Han
		Yu.
		\newblock The {A}ssouad spectrum and the quasi-{A}ssouad dimension: a tale of
		two spectra.
		\newblock {\em Ann. Acad. Sci. Fenn. Math.}, 44(1):379--387, 2019.
		
		\bibitem{1960Hormander}
		Lars H\"ormander.
		\newblock Estimates for translation invariant operators in {$L\sp{p}$}\ spaces.
		\newblock {\em Acta Math.}, 104:93--140, 1960.
		
		\bibitem{1987Leckband}
		Mark Leckband.
		\newblock A note on the spherical maximal operator for radial functions.
		\newblock {\em Proc. Amer. Math. Soc.}, 100(4):635--640, 1987.
		
		\bibitem{2003Lee}
		Sanghyuk Lee.
		\newblock Endpoint estimates for the circular maximal function.
		\newblock {\em Proc. Amer. Math. Soc.}, 131(5):1433--1442, 2003.
		
		\bibitem{2023NowakRoncalSzarek}
		Adam Nowak, Luz Roncal, and Tomasz~Z. Szarek.
		\newblock Endpoint estimates and optimality for the generalized spherical
		maximal operator on radial functions.
		\newblock {\em Commun. Pure Appl. Anal.}, 22(7):2233--2277, 2023.
		
		\bibitem{2023RS}
		Joris Roos and Andreas Seeger.
		\newblock Spherical maximal functions and fractal dimensions of dilation sets.
		\newblock {\em Amer. J. Math.}, 145(4):1077--1110, 2023.
		
		\bibitem{1997Schlag}
		Wilhelm Schlag.
		\newblock A generalization of {B}ourgain's circular maximal theorem.
		\newblock {\em J. Amer. Math. Soc.}, 10(1):103--122, 1997.
		
		\bibitem{1997SS}
		Wilhelm Schlag and Christopher~D. Sogge.
		\newblock Local smoothing estimates related to the circular maximal theorem.
		\newblock {\em Math. Res. Lett.}, 4(1):1--15, 1997.
		
		\bibitem{2003STW}
		Andreas Seeger, Terence Tao, and James Wright.
		\newblock Endpoint mapping properties of spherical maximal operators.
		\newblock {\em J. Inst. Math. Jussieu}, 2(1):109--144, 2003.
		
		\bibitem{1995SWW}
		Andreas Seeger, Stephen Wainger, and James Wright.
		\newblock Pointwise convergence of spherical means.
		\newblock {\em Math. Proc. Cambridge Philos. Soc.}, 118(1):115--124, 1995.
		
		\bibitem{1997SWW}
		Andreas Seeger, Stephen Wainger, and James Wright.
		\newblock Spherical maximal operators on radial functions.
		\newblock {\em Math. Nachr.}, 187:241--265, 1997.
		
		\bibitem{1976Stein}
		Elias~M. Stein.
		\newblock Maximal functions. {I}. {S}pherical means.
		\newblock {\em Proc. Nat. Acad. Sci. U.S.A.}, 73(7):2174--2175, 1976.
		
	\end{thebibliography}

\end{document}